\tikzset{main node/.style={circle,fill=white,draw,minimum size=1cm,inner sep=0pt},}
\newtheorem{theorem}{Theorem}[section] 
\newtheorem{proposition}[theorem]{Proposition}
\newtheorem{lemma}[theorem]{Lemma}
\theoremstyle{definition}
\newtheorem{definition}[theorem]{Definition}
\theoremstyle{remark}
\newtheorem{remark}[theorem]{Remark}
\newtheorem{example}[theorem]{Example}
\newcolumntype{L}[1]{>{\raggedright\let\newline\\\arraybackslash\hspace{0pt}}m{#1}}
\journal{Involve}
\begin{document}

\begin{frontmatter}

\title{Tile Based Modeling of DNA Self-Assembly for Two Graph Families with Appended Paths}

\author[label1]{Chloe Griffin}
\ead{dcgriffin001@gmail.com}
\address[label1]{Converse University, 580 E. Main Street, Spartanburg, South Carolina 29302}
\author[label1]{Jessica Sorrells}
\ead{jessica.sorrells@converse.edu}

\begin{abstract}
 Branched molecules of deoxyribonucleic acid (DNA) can self-assemble into nanostructures through complementary cohesive strand base pairing. The production of DNA nanostructures is valuable in targeted drug delivery and biomolecular computing. With theoretical efficiency of laboratory processes in mind, we use a flexible tile model for DNA assembly. We aim to minimize the number of different types of branched junction molecules necessary to assemble certain target structures. We represent target structures as discrete graphs and branched DNA molecules as vertices with half-edges. We present the minimum numbers of required branched molecule and cohesive-end types under three levels of restrictive conditions for the tadpole and lollipop graph families. These families represent cycle and complete graphs with a path appended via a single cut-vertex. We include three general lemmas regarding such vertex-induced path subgraphs. Through proofs and examples, we demonstrate the challenges that can arise in determining optimal construction strategies. 
\end{abstract}

\begin{keyword}
graph theory \sep discrete graph \sep lollipop graphs \sep tadpole graphs \sep nanostructures \sep DNA self-assembly \sep flexible tile model
\end{keyword}

\end{frontmatter}

\section{Introduction}

In 1974, Professor Norio Taniguchi of the Tokyo Science University coined the term “nanotechnology” to describe the engineering of materials at an atomic level \cite{Taniguchi}. Nadrian Seeman, a dedicated crystallographer, invented the field of DNA nanotechnology in an attempt to better the crystallization process \cite{Nadrian1}. Since that time, the use of nanotechnology has expanded to a wide variety of fields, including  medicine, electronics, food, fuel cells, solar cells, and batteries \cite{NanotechUses}. In the early 2000s, DNA nanostructure self-assembly became a key technology for targeted drug delivery, biosensors, and biomolecular computing \cite{labean2007constructing} \cite{seeman2007overview}. The need for nanostructures have prompted researchers to purposefully guide the DNA self-assembly process. \par
Specially designed DNA strands can bond together to form geometric structures. The characteristics of a target structure are dependent on its applications. Our goal is to theoretically minimize the number of components necessary to obtain target structures. Mathematically, one can represent final structures as discrete graphs \cite{mintiles} \cite{ellis2019tile}. Incorporating graph theory has resulted in new design strategies for the self-assembly process, but has also introduced new graph invariants and combinatorial questions.

\subsection{Flexible Tile Based Model}

By representing a target structure with a discrete graph, we are able to model and create design strategies for optimal design of component DNA building blocks. We follow the flexible tile model described by Ellis-Monaghan et al. in \cite{mintiles}.  A $k$\textit{-armed branched junction molecule} is a molecule of DNA consisting of a center point with extending arms, as shown in Figure \ref{fig:branchedjunction}. In this model, $k$-armed branched junction molecules are used as the building blocks of DNA structures. The extended strands of DNA are capable of bonding with any other extended strand with a complementary sequence of Watson-Crick bases. These bonding strands are referred to as \textit{cohesive-ends}. In our model, a vertex represents the center of a branched junction molecule, and half-edges represent cohesive ends. Two half-edges may bond to form a complete edge between two vertices. 

\begin{figure}[h]
 \centering \includegraphics[width = 5 cm]{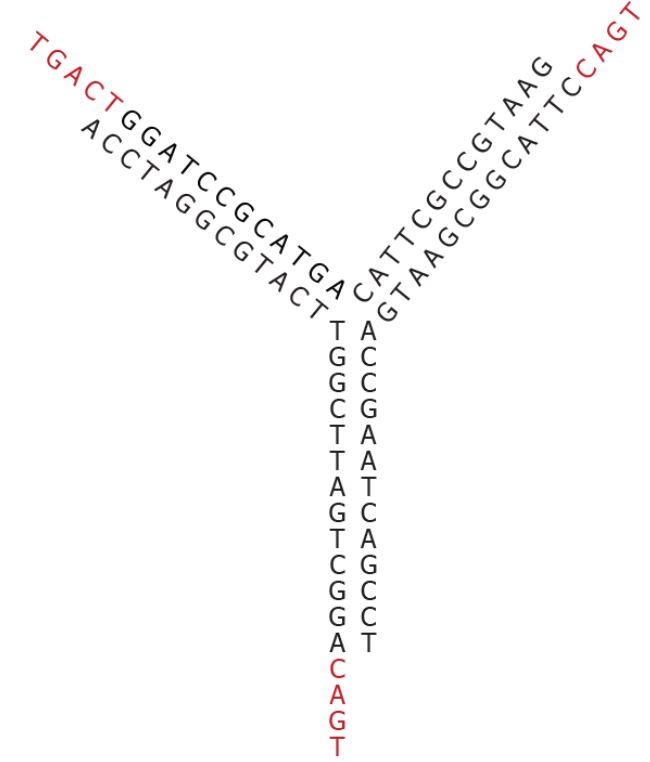} 
\caption{A $k$-armed branched junction molecule with cohesive-ends} \label{fig:branchedjunction}
\end{figure}

\begin{definition} The abstract representation of a $k$-armed branched junction molecule is called a \textit{tile}. Each tile consists of a vertex with $k$ extending half-edges. \end{definition}

Half-edges are labeled with hatted and un-hatted letters. These letters are referred to as \textit{bond-edge types}, and they represent specific sequences of DNA. The hatted and un-hatted version of a letter are complementary cohesive-ends that can bond together. Formally, we represent tiles as sets of hatted and un-hatted letters. The flexible tile model assumes tile arms are sufficiently long and flexible in order to bond to arms of other tiles.
 
 \begin{definition} A collection of tiles is called a \textit{pot}.
 \end{definition} 
 
 We write a pot of tiles as a set of tiles, as illustrated by Figure \ref{fig:potoftiles}. \\
 
  \begin{figure}[h]
            \centering \includegraphics[width = 7 cm]{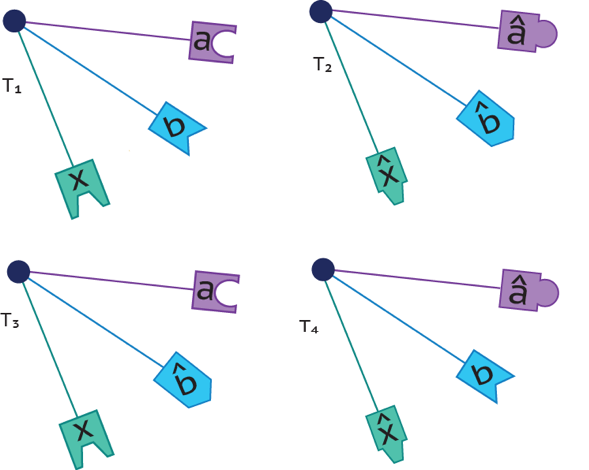} 
            \caption{A pot of four tiles: $\{\{a,b,x\}, \{\hat{a},\hat{b},\hat{x}\}, \{a, \hat{b}, x\}, \{\hat{a},b,\hat{x}\}\}$} \label{fig:potoftiles}
\end{figure}
 
 We refer to a collection of tiles joined together as a \textit{complex}; a complex with no unmatched half-edges is a \textit{complete} complex. Full edges in a complex are referred to as \textit{bond-edges}. A collection of tiles joined together in a complete complex is viewed as a graph $G$ representing a DNA nanostructure. If a graph $G$ can be constructed as a complete complex from a given pot $P$, we say that $P$ \textit{realizes} $G$. Figure \ref{fig:Pot} illustrates a graph realized by a collection of tiles.
 
  \begin{figure}[h]
            \centering \includegraphics[width = 5 cm]{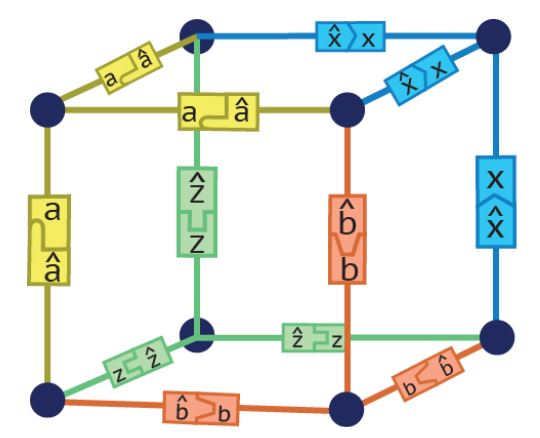} 
            \caption{Cube graph with half-edge labels} \label{fig:Pot}
\end{figure}
 
In a laboratory setting, an attempt to produce a target structure can result in an excess of costly branched junction molecules. Mathematical research done within the flexible tile model seeks to find a pot with the minimum numbers of bond-edge types and tile types required to construct a given target complex \cite{mintiles} \cite{dnamotivation}. We aim to find an accurate edge labeling of a specific graph with as few different half-edge labels as possible. We consider the experimental potential for a pot to realize graphs other than the target. Therefore, we determine the minimum number of bond-edge and tile types under three levels of restrictive conditions given in \cite{mintiles}: 

\begin{itemize}     
\item \textbf{Scenario 1.} Complete complexes of smaller size (that is, graphs with fewer vertices) than the target complex are allowed to be realized by the pot.
\item \textbf{Scenario 2.} Complete complexes of the same size as (that is, graphs with the same number of vertices), but not isomorphic to, the target complex are allowed to be realized by the pot. No smaller complete complexes are allowed to be realized.
\item \textbf{Scenario 3.} No complete complexes of smaller size and no non-isomorphic complexes of the same size are allowed to be realized by the pot; the target graph is the only graph of that order realized by the pot.
\end{itemize}

We use \textit{order} to mean the number of vertices in a graph; we use \textit{size} to refer to the number of vertices in an incomplete complex. With the flexible tile model, the minimum numbers of tile and bond-edge types in all three scenarios have been found for trees, cycles, complete, complete bipartite, wheel, windmill, and gear graphs \cite{mintiles} \cite{Mattamira} \cite{WheelGraphs}. In this work, we prove a collection of results in all scenarios for lollipop and tadpole graphs of all orders. The majority our results in Scenarios 1 and 3 are exact values; when an exact value is not determined, we provide two consecutive integers as bounds. Scenario 2 presents the greatest challenge. Still, in many cases we are able to give exact values. For certain tadpole and lollipop graphs in Scenario 2, we narrow the range for optimal construction by providing both lower and upper bounds. \par
 Currently, there is no proven theory in the area of flexible tile based DNA self-assembly that defines the relationship between graphs and subgraphs. We present the following definitions to aid in explaining our work. 
 \begin{definition} A \textit{cut-vertex} of a graph is a vertex whose deletion increases the number of components. \cite{WestGraphTheory} \end{definition}
 \begin{definition} A \textit{vertex-induced subgraph} is a subset of the vertices of a graph together with any edges whose endpoints are both in this subset. \cite{vertexinduced} \end{definition}
 Our work illustrates the differences between complete graphs and lollipop graphs, and between cycle graphs and tadpole graphs; these serve as case studies of certain vertex-induced subgraphs joined to the remainder of the graph via a single cut-vertex. Our results give insight into general rules for self-assembly when appending a path onto a graph. For example, in Section \ref{sec:prelim} we give conditions in which a tile or bond-edge type can be used twice within an appended path component. By exploring lollipop and tadpole graphs, we also gain perspective on the difficulties of determining optimal values for graph families that expand in order in two distinct ways.

\subsection{Methods and Notation}

We follow the notation of \cite{mintiles} to denote the minimum number of tiles needed to construct a target graph, $G$. We use the notation $T_{i}(G)$ for $i = 1,2,3$ where $i$ value corresponds to the scenario in which the pot is being described. Similarly, $B_i(G)$ denotes the minimum number of bond-edge types needed. $T_i(G)$ and $B_i(G)$ are new graph invariants. To notate the number of distinct even vertex degrees, distinct odd vertex degrees, and total distinct vertex degrees that appear in a graph $G$, we use $ev(G)$, $ov(G)$, and $av(G)$, respectively. In order to more efficiently illustrate labeled graph edges, we use colors for different bond-edge types and arrows oriented toward the hatted version of the bond-edge type, as illustrated in Figure \ref{fig:arrowexplain}. We say that bond-edges have the same orientation if the two edges, represented as arrows, point in the same direction under consideration in a given proof. We denote different tile types as $t_1, t_2,$ etc.

\begin{figure}[h!]
     \centering
	 \begin{tikzpicture}[transform shape, scale = 0.7]
			
 \node[main node] (a) at (0,0) {$t_1$};
 \node[main node] (b) at (0,4) {$t_1$};
 \node[main node](c) at (3,2) {$t_2$};
 \node[main node] (d) at (5,2) {$t_1$};		 
		 
\path[draw,thick]
(a) edge node [near start,left]{$a$} (b)
(a) edge node [near start,right]{$\hat{a}$} (c)
(b) edge node [near start,left]{$\hat{a}$} (a)
(b) edge node [near start,above]{$a$} (c)
(c) edge node [near start,right]{$a$} (a)
(c) edge node [near start,above]{$\hat{a}$} (b)
(c) edge node [near start,above]{$b$} (d)
(d) edge node [near start,above]{$\hat{b}$} (c);

 \node[main node] (e) at (9,0) {$t_1$};
 \node[main node] (f) at (9,4) {$t_1$};
 \node[main node](g) at (12,2) {$t_2$};
 \node[main node] (h) at (14,2) {$t_1$};

\path[draw,thick,color=red,->]
(e) edge node []{} (f)
(g) edge node []{} (e)
(f) edge node []{} (g);

\path[draw,thick,color=blue,->]
(g) edge node []{} (h);

\end{tikzpicture}

\caption{Colored arrow edge labeling of a graph} 
\label{fig:arrowexplain}
\end{figure}
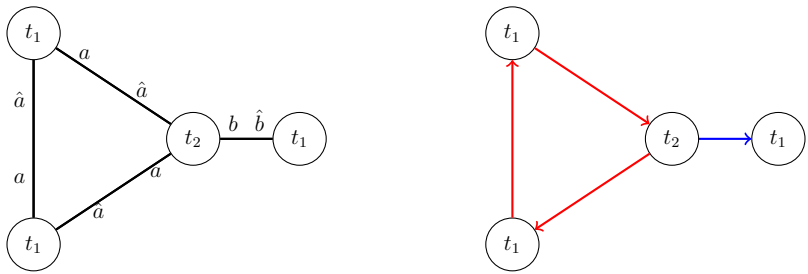

In order to determine minimum values in Scenario 1, we apply Corollary 1 and Theorem 1 of \cite{mintiles}, which provide that $B_1(G)=1$ and $av(G) \leq T_1(G) \leq ev(G) + 2ov(G)$ for all graphs $G$. 
For Scenario 2, it is useful to note from Theorem 2 of \cite{mintiles} that $B_2(G)+1 \leq T_2(G)$ for all graphs $G$. We also utilize the following system of linear equations and corresponding augmented matrix introduced in \cite{mintiles}.

\begin{definition}\label{matrix} Let $P = \{ t_1,...,t_p \}$ be a pot and let $z_{i,j}$ denote the net number of cohesive-ends of type $a_i$ on tile $t_j$, where un-hatted cohesive-ends are counted positively and hatted cohesive-ends are counted negatively. Then the following system of equations must be satisfied by any complete graph constructed from $P$:
\begin{eqnarray*} z_{1,1}r_1+z_{1,2}r_2+...+z_{1,p}r_p &=& 0 \\
& \vdots & \\
z_{m,1}r_1+z_{m,2}r_2+...+z_{m,p}r_p &=& 0 \\
r_1 + r_2 + ... + r_p &=& 1 \end{eqnarray*}

\noindent The \emph{construction matrix} of $P$, denoted $M(P)$, is the corresponding augmented matrix:

\begin{equation*}
M(P) = \begin{bmatrix} \begin{array}{*{20}{cccc|c}}
   {{z_{1,1}}} & {{z_{1,2}}} &  \ldots  & {{z_{1,p}}} & 0  \\
    \vdots  &  \vdots  & {\ddots} &  \vdots  & {} \vdots \\
   {{z_{m,1}}} & {{z_{m,2}}} &  \ldots  & {{z_{m,p}}} & 0  \\
1 & 1 &  \ldots  & 1 & 1  \\
 \end{array} \end{bmatrix} 
\end{equation*}
\end{definition}

In Scenario 2, the construction matrix can sometimes be used to determine if any graphs smaller than the target graph can be realized from a pot. $M(P)$ has solutions $\langle r_1,...,r_p \rangle$, where, for the purposes of this model, components $r_i$ of the vector solutions are tile proportions, and $r_i \in \mathbb{Q}^+$. The least common denominator of the $r_i's$ is the order of the smallest graph that can be realized by $P$ \cite{mintiles}. 

\begin{definition} The solution space of the construction matrix of a pot $P$ is called the {\it spectrum} of $P$ and is denoted  $\mathcal S(P)$. \cite{jonoska} \end{definition}

If $\mathcal{S}(P)$ consists of a unique solution, conclusions are often straightforward. However, in Sections \ref{Lollipopgraphsection} and \ref{Tadpolegraphsection} the spectrums of provided pots frequently have one or more degrees of freedom. To overcome this difficulty, we use alternative methods to those found in \cite{mintiles}. This challenge, among others in Scenarios 2 and 3, can be witnessed in many of the proofs for lollipop and tadpole graphs. In general, determining optimal design strategies in the flexible tile model for Scenario 2 is NP-complete; a full discussion of the computational complexity of finding satisfactory pots can be found in \cite{Sorrells}. This validates the pragmatic methods seen in our proofs. We continue to utilize the construction matrix and spectrum of a given pot and examine all possibilities for tile proportions that remain valid under the restrictions of the flexible tile model (i.e. non-negative values less than or equal to one). This process restricts the values of free variables present in those tile proportions. We exhaust all possibilities for how the arms of the tiles, in the proportions determined to be valid, can bond to one another; this typically results in a proven lower bound for the size of any complete complex realized by the pot.

In Scenario 3, general analysis of graph isomorphism is the primary technique, along with continued verification via the spectrum of a pot that no smaller graphs can be realized; our work in Scenario 3 closely mirrors the strategies found in \cite{mintiles}. Finally, it is helpful to note from Proposition 1 of \cite{mintiles} that $B_3(G) \geq B_2(G) \geq B_1(G)$ and $T_3(G) \geq T_2(G) \geq T_1(G)$.

\section{General Results for Appended Paths}\label{sec:prelim}
The following lemmas are useful in several proofs that follow in Sections \ref{Lollipopgraphsection} and \ref{Tadpolegraphsection}. These are general results that can be applied in Scenarios 2 or 3 to any graph with a vertex-induced path subgraph connected to the remainder of the graph via a single cut-vertex. \par

Lemmas \ref{S2NoRepeatedBondEdge} and \ref{S2RepeatedBondEdge} are reminders of the nuance required when determining optimal pots in Scenario 2.

\begin{lemma} \label{S2NoRepeatedBondEdge}
Let $G$ be a graph of order $m+n$ where $n \leq m$ and in which a vertex-induced path subgraph of $n$ vertices is connected to the remaining $m$ vertices of the graph via a single cut-vertex. In order to satisfy the requirements of Scenario 2, a single bond-edge type may not be repeated in the path subgraph. Thus, $B_2(G) \geq n$ and $T_2(G) \geq n$.
\end{lemma}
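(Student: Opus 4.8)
## Proof Proposal

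The plan is to argue by contradiction: suppose a single bond-edge type is repeated within the appended path, and then exhibit a complete complex of strictly smaller order than $G$, violating the Scenario 2 requirement that no smaller complexes be realizable. The key observation is that a path is, structurally, a very rigid object in the flexible tile model: each interior vertex of the path has degree $2$, the two endpoints have degree $1$ (except where the path meets the cut-vertex), and there is essentially only one way to traverse it. This rigidity is what lets a repeated bond-edge type create an unwanted ``shortcut.''

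First I would set up notation by letting the path subgraph have vertices $v_1, v_2, \ldots, v_n$, where $v_1$ is adjacent to the cut-vertex and $v_n$ is the pendant end (the degree-one leaf of the path). The bond-edges of the path are then $e_1, \ldots, e_{n}$, where $e_1$ joins the cut-vertex to $v_1$ and $e_i$ joins $v_{i-1}$ to $v_i$ for the interior edges. Suppose for contradiction that some bond-edge type, say the pair $\{a, \hat a\}$, labels two distinct edges $e_j$ and $e_k$ of the path with $j < k$, with matching orientation so that the half-edge of type $a$ and the half-edge of type $\hat a$ on each occurrence sit in the corresponding positions along the path. The heart of the argument is that I can then ``splice out'' the segment of the path strictly between these two identical bond-edges: the half-edges on either side of the cut are complementary (one $a$ and one $\hat a$) and can bond directly to each other, producing a shorter complete complex.

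The main technical step is to verify that this spliced complex is genuinely complete and genuinely smaller. Completeness follows because every half-edge that was matched before the splice remains matched afterward, and the two newly joined half-edges form a valid bond-edge of type $\{a,\hat a\}$; smallness follows because at least the vertices strictly between the two repeated edges are removed, so the resulting order is at most $m + n - 1 < m + n$. Here I would need to be careful about the orientation bookkeeping: since the edges are \emph{directed} (arrows point toward the hatted end), I must confirm that the surviving half-edges after removal really are one hatted and one un-hatted so that they can cohere. This is where the single-cut-vertex hypothesis and the degree structure of the path matter, as they guarantee the deleted segment is a clean interval with exactly one incoming and one outgoing half-edge of the repeated type at its boundary.

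The hard part will be handling the orientation cases cleanly rather than waving at them, since a repeated bond-edge type could in principle appear with either the same or opposite orientation along the path, and I must show that \emph{every} such repetition yields a smaller realizable complex. I expect the opposite-orientation case to require slightly more care, possibly producing a smaller complex by forming a closed loop or by reconnecting the remaining pieces differently; in either subcase the conclusion is that a strictly smaller complete complex exists. Once the contradiction with Scenario 2 is established for all orientation cases, the bound $B_2(G) \geq n$ follows immediately, since $n$ distinct bond-edge types are forced across the $n$ path edges, and then $T_2(G) \geq n$ follows because distinct bond-edge types require at least as many tile types to carry them (or, more directly, by combining with the inequality $B_2(G)+1 \leq T_2(G)$ quoted from \cite{mintiles}, which in fact yields the even stronger $T_2(G) \geq n+1$, so the stated $T_2(G)\geq n$ is certainly satisfied).
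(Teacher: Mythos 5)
Your same-orientation case is sound and coincides with the paper's argument: when the two occurrences of $\{a,\hat a\}$ point the same way along the path, the intermediate segment can be spliced out to give a strictly smaller complete complex (the paper additionally observes that the excised segment closes into a small cycle by itself). The genuine gap is the opposite-orientation case, which you defer with the expectation that it ``in either subcase'' yields a strictly smaller complex, possibly ``by forming a closed loop.'' That expectation is incorrect, and this case is the crux of the lemma. With opposite orientation, the segment strictly between the two repeated edges has unmatched half-edges of the \emph{same} kind at both ends (e.g., both hatted), so it cannot close into a loop, and no single splice reconnects the pieces into something smaller; in fact the obvious reconnection (swapping which complementary half-edge bonds where) just reproduces a graph isomorphic to $G$. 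What the pot genuinely realizes in this case are two \emph{other} complete complexes: $G_1$, the middle segment capped by two copies of the tail of the path (order $2l+k$), and $G_2$, the middle segment capped by two copies of $H$ together with the initial segment (order $2(m+y)+k$), where $y$, $k$, $l$ count the path vertices before, between, and after the repeated edges. Neither of these is \emph{individually} guaranteed to be smaller than $G$; the paper's proof works by comparing their orders and showing they cannot both be at least $m+n$ --- if one exceeds $m+n$ the other falls strictly below it, and having all orders equal would force $l=m+y$, hence $n\geq m+1$, contradicting $n\leq m$.

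The tell-tale sign of the gap is that your proposal never invokes the hypothesis $n\leq m$ anywhere, yet the statement is false without it: the paper's Lemma \ref{S2RepeatedBondEdge} shows that when $n>m$ a bond-edge type \emph{can} be repeated in the path (with opposite orientation and suitable spacing) without violating Scenario 2. So no argument of the type you sketch --- ``every repetition yields a smaller complex by some local reconnection'' --- can possibly close the opposite-orientation case; one must enumerate the realizable complexes $G_1$ and $G_2$ and run the counting comparison that uses $n\leq m$. Your concluding derivations of $B_2(G)\geq n$ and $T_2(G)\geq n$ from the no-repetition claim are fine (and the route through $B_2(G)+1\leq T_2(G)$ indeed gives the stronger $T_2(G)\geq n+1$), but they rest on the unproved case.
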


\begin{proof} Consider a graph $G_{m,n}$ which is created by appending a path of order $n$ to a graph $H$ of order $m$ via a single cut-vertex. Figure \ref{fig:Lemmafig1} illustrates a general form of such a graph $G_{m,n}$; note that $H$ may be any graph consisting of $m$ vertices, including the aforementioned cut-vertex. Suppose bond-edge type $a$ appears twice along the path. Let the number of vertices in the path between $H$ and the first instance of the repeated bond-edge type be denoted as $y$ (including the vertex with the first half-edge labeled with bond-edge type $a$, but not the vertex with the complementary half-edge),  the number of vertices between the two repeated bond-edge types denoted as $k$, and the number of vertices in the path from the second instance of the repeated bond-edge to the last vertex of the path denoted as $l$, as shown in Figure \ref{fig:Lemmafig1}. Note that $n=y+k+l$. \par 
If the edges with bond-edge type $a$ have the same orientation, the resulting pot realizes two graphs smaller than $G_{m,n}$, one of which is $H$ with a shorter path appended formed by removing the middle $k$ vertices, and the other is a cycle formed from the middle $k$ vertices (a single vertex with a loop edge if $k=1$). Both possibilities are shown in Figure \ref{fig:Lemmafig1} and are clearly smaller than $G_{m,n}$.

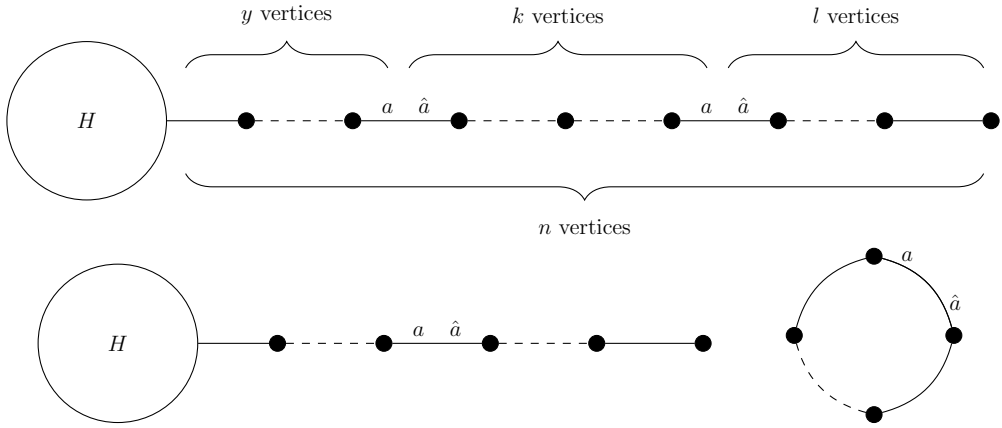
\begin{figure}[h!]
     \centering
	 \begin{tikzpicture}[transform shape, scale = 0.7]
 \node[main node, minimum size = 3cm] (a) at (1,0) {$H$};
 \node[main node, fill=black, minimum size = 0.3cm] (b) at (4,0) {};
 \node[main node, fill=black, minimum size = 0.3cm] (c) at (6,0) {};
 \node[main node, fill=black, minimum size = 0.3cm] (d) at (8,0) {};
 \node[main node, fill=black, minimum size = 0.3cm] (e) at (10,0) {};
 \node[main node, fill=black, minimum size = 0.3cm] (f) at (12,0) {};
 \node[main node, fill=black, minimum size = 0.3cm] (g) at (14,0) {};
 \node[main node, fill=black, minimum size = 0.3cm] (h) at (16,0) {};
 \node[main node, fill=black, minimum size = 0.3cm] (i) at (18,0) {};
		 
\path[draw]
(a) edge node []{} (b)
(c) edge node [above]{$a$ \hspace{2mm} $\hat{a}$} (d)
(f) edge node [above]{$a$ \hspace{2mm} $\hat{a}$} (g)
(h) edge node []{} (i);

\path[draw, dashed]
(b) edge node []{} (c)
(d) edge node []{} (e)
(e) edge node []{} (f)
(g) edge node []{} (h);

\draw [decorate,decoration={brace,amplitude=10pt},xshift=-4pt,yshift=0pt]
(3,1) -- (6.8,1) node [black,midway,xshift=0, yshift=1cm] 
{$y$ vertices};

\draw [decorate,decoration={brace,amplitude=10pt},xshift=-4pt,yshift=0pt]
(7.2,1) -- (12.8,1) node [black,midway,xshift=0, yshift=1cm] 
{$k$ vertices};

\draw [decorate,decoration={brace,amplitude=10pt},xshift=-4pt,yshift=0pt]
(13.2,1) -- (18,1) node [black,midway,xshift=0, yshift=1cm] 
{$l$ vertices};

\draw [decorate,decoration={brace,amplitude=10pt,mirror},xshift=-4pt,yshift=0pt]
(3,-1) -- (18,-1) node [black,midway,xshift=0, yshift=-1cm] 
{$n$ vertices};
\end{tikzpicture}

 \begin{tikzpicture}[transform shape, scale = 0.7]
 \node[main node, minimum size = 3cm] (a) at (1,0) {$H$};
 \node[main node, fill=black, minimum size = 0.3cm] (b) at (4,0) {};
 \node[main node, fill=black, minimum size = 0.3cm] (c) at (6,0) {};
 \node[main node, fill=black, minimum size = 0.3cm] (d) at (8,0) {};
 \node[main node, fill=black, minimum size = 0.3cm] (e) at (10,0) {};
 \node[main node, fill=black, minimum size = 0.3cm] (f) at (12,0) {};
		 
\path[draw]
(a) edge node []{} (b)
(c) edge node [above]{$a$ \hspace{2mm} $\hat{a}$} (d)
(e) edge node []{} (f);

\path[draw, dashed]
(b) edge node []{} (c)
(d) edge node []{} (e);
\end{tikzpicture} \hspace{7mm}
\begin{tikzpicture}[transform shape, scale = 0.7]
 \node[main node, fill=black, minimum size = 0.3cm] (a) at (-1.5,0) {};
 \node[main node, fill=black, minimum size = 0.3cm] (b) at (0,1.5) {};
  \node[main node, fill=black, minimum size = 0.3cm] (c) at (1.5,0) {};
 \node[main node, fill=black, minimum size = 0.3cm] (d) at (0,-1.5) {};
		 
\path[draw]
(a) edge [bend left] node []{} (b)
(b) edge [bend left] node [near start,above]{$a$} (c)
(b) edge [bend left] node [near end,right]{$\hat{a}$} (c)
(c) edge [bend left] node []{} (d);

\path[draw, dashed]
(d) edge [bend left] node []{} (a);
\end{tikzpicture}

\caption{$G_{m,n}$ and two smaller graphs realized in proof of Lemma \ref{S2NoRepeatedBondEdge}} 
\label{fig:Lemmafig1}
\end{figure}

If the edges with bond-edge type $a$ have opposite orientation, the resulting pot realizes at least two graphs different from $G_{m,n}$. One such graph, denoted as $G_1$, is the combination of the $k$ vertices in the middle of the path together with two copies of the $l$ vertices at the end of the path. $G_1$ has two half-edges labeled with bond-edge type $a$ on either side of the middle $k$ vertices. Each half-edge bonds with a copy of the $l$ vertices. $G_1$ is a path graph of order $2l +k$. Another such graph, denoted as $G_2$, is the combination of two copies of $H$ together with the first $y$ vertices in the path, joined by the middle $k$ vertices in the path. $G_2$ has two half-edges labeled with bond-edge type $a$ on either side of the middle $k$ vertices. Each half-edge bonds to a copy of the complex consisting of $H$ and the first $y$ vertices in the path. The order of $G_2$ is $2(m+y) + k$. These two graphs are illustrated in Figure \ref{fig:Lemmafig2}.  We claim at least one of these two potential graphs must be smaller than $G_{m,n}$.  \par

\begin{figure}[h!]
     \centering
     
     \begin{tikzpicture}[transform shape, scale = 0.7]
 \node[main node, fill=black, minimum size = 0.3cm] (a) at (2,0) {};
 \node[main node, fill=black, minimum size = 0.3cm] (b) at (4,0) {};
 \node[main node, fill=black, minimum size = 0.3cm] (c) at (6,0) {};
 \node[main node, fill=black, minimum size = 0.3cm] (d) at (8,0) {};
 \node[main node, fill=black, minimum size = 0.3cm] (e) at (10,0) {};
 \node[main node, fill=black, minimum size = 0.3cm] (f) at (12,0) {};
 \node[main node, fill=black, minimum size = 0.3cm] (g) at (14,0) {};
 \node[main node, fill=black, minimum size = 0.3cm] (h) at (16,0) {};
 \node[main node, fill=black, minimum size = 0.3cm] (i) at (18,0) {};
		 
\path[draw]
(a) edge node []{} (b)
(c) edge node [above]{$a$ \hspace{2mm} $\hat{a}$} (d)
(f) edge node [above]{$\hat{a}$ \hspace{2mm} $a$} (g)
(h) edge node []{} (i);

\path[draw, dashed]
(b) edge node []{} (c)
(d) edge node []{} (e)
(e) edge node []{} (f)
(g) edge node []{} (h);

\draw [decorate,decoration={brace,amplitude=10pt},xshift=-4pt,yshift=0pt]
(2,1) -- (6.5,1) node [black,midway,xshift=0, yshift=1cm] 
{$l$ vertices};

\draw [decorate,decoration={brace,amplitude=10pt},xshift=-4pt,yshift=0pt]
(7.5,1) -- (12.5,1) node [black,midway,xshift=0, yshift=1cm] 
{$k$ vertices};

\draw [decorate,decoration={brace,amplitude=10pt},xshift=-4pt,yshift=0pt]
(13.5,1) -- (18,1) node [black,midway,xshift=0, yshift=1cm] 
{$l$ vertices};
\end{tikzpicture}

\vspace{1cm}
	 \begin{tikzpicture}[transform shape, scale = 0.7]
 \node[main node, minimum size = 3cm] (i) at (1,-2) {$H$};
 \node[main node, minimum size = 3cm] (a) at (1,2) {$H$};
 \node[main node, fill=black, minimum size = 0.3cm] (b) at (4,2) {};
 \node[main node, fill=black, minimum size = 0.3cm] (c) at (6,2) {};
 \node[main node, fill=black, minimum size = 0.3cm] (d) at (8,2) {};
 \node[main node, fill=black, minimum size = 0.3cm] (e) at (8,0) {};
 \node[main node, fill=black, minimum size = 0.3cm] (f) at (8,-2) {};
 \node[main node, fill=black, minimum size = 0.3cm] (g) at (6,-2) {};
 \node[main node, fill=black, minimum size = 0.3cm] (h) at (4,-2) {};
 \node[main node, fill=white, draw=white, minimum size = 0cm] (j) at (10.5,0) {$k$ vertices};

\path[draw]
(a) edge node []{} (b)
(c) edge node [above]{$a$ \hspace{2mm} $\hat{a}$} (d)
(f) edge node [above]{$a$ \hspace{2mm} $\hat{a}$} (g)
(h) edge node []{} (i);

\path[draw, dashed]
(b) edge node []{} (c)
(d) edge node []{} (e)
(e) edge node []{} (f)
(g) edge node []{} (h);

\draw [decorate,decoration={brace,amplitude=10pt},xshift=-4pt,yshift=0pt]
(3,3) -- (7,3) node [black,midway,xshift=0, yshift=1cm] 
{$y$ vertices};

\draw [decorate,decoration={brace,amplitude=10pt},xshift=-4pt,yshift=0pt]
(7,-3) -- (3,-3) node [black,midway,xshift=0, yshift=-1cm] 
{$y$ vertices};

\draw [decorate,decoration={brace,amplitude=10pt},xshift=-4pt,yshift=0pt]
(9,2.2) -- (9,-2.2) node [black,midway,xshift=10, yshift=0cm] {};
\end{tikzpicture}

\caption{Two smaller graphs $G_1$ and $G_2$ realized in proof of Lemma \ref{S2NoRepeatedBondEdge}} 
\label{fig:Lemmafig2}
\end{figure}
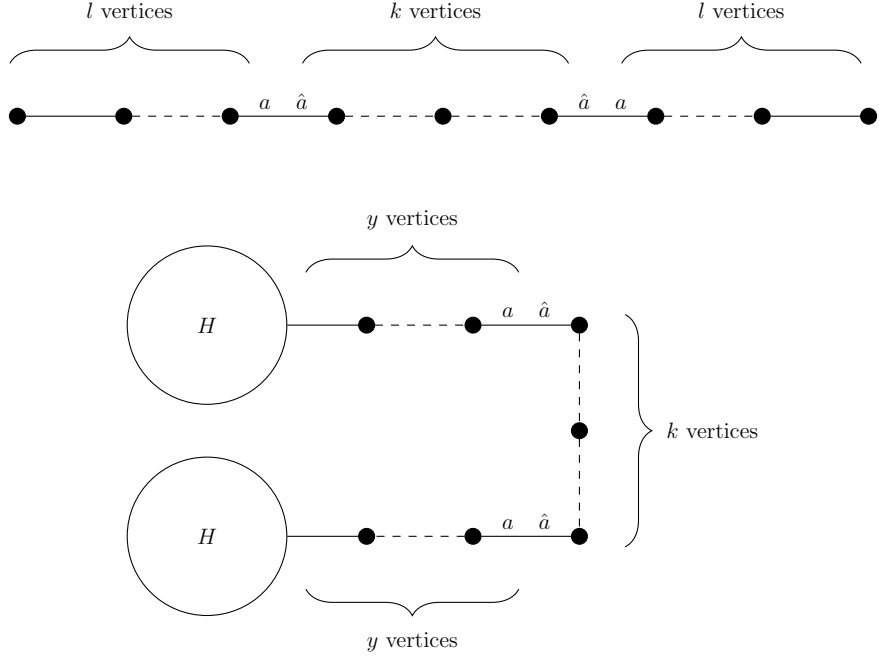

 We show that if the order of either $G_1$ or $G_2$ exceeds the order of $G_{m,n}$, then the other graph will be of smaller order than $G_{m,n}$. Suppose $G_1$ is larger than $G_{m,n}$, so $2l + k > m + n$. Substituting for $l$ gives $2(n-k-y)+k > m+n$. By rearranging, $2(m+y)+k<m+n$, showing that $G_2$ is smaller than $G_{m,n}$. Conversely, suppose $G_2$ is larger than $G_{m,n}$, so $2(y + m) + k> m + n$. Substituting for $y$ gives $m+2(n-k-l)+k>n$, so $2l+k < m+n$. Hence, $G_1$ must be smaller than $G_{m,n}$.\par

Suppose $G_1$ and $G_2$ are of equal order. Then $2l+k=2(m+y)+k$, so $l=m+y$. This would require $n \geq m+1$, and thus is impossible. Note, however, that this would imply $G_{m,n}$, $G_1$, and $G_2$ are all of equal order. \par

It follows that there can be no repeated bond-edge types or tile types within the appended path subgraph of $G_{m,n}$ if we are to prevent the realization of smaller graphs. Note that all tile types used to label vertices in the path will also be distinct if no bond-edge types can be repeated. Thus, in order to satisfy Scenario 2 there must be $n$ bond-edge types and $n$ tile types used to label the vertices in the path subgraph of the graph $G_{m,n}$. 
\end{proof}

\begin{lemma} \label{S2RepeatedBondEdge}
Let $G$ be a graph in which a vertex-induced path subgraph of $n$ vertices is connected to the remaining $m$ vertices of the graph via a single cut-vertex such that $n>m$. In order to satisfy the requirements of Scenario 2 a bond-edge type can be used to label at most two edges of the path subgraph, and these two edges must be among the $n-m+1$ edges closest to the other $m$ vertices of the graph. In addition, the edges must be labeled with opposite orientation. Thus, $B_2(G) \geq \lceil \frac{n-m+1}{2} \rceil + m-1$ and $T_2(G) \geq \lceil \frac{n-m+1}{2} \rceil +m$.
\end{lemma}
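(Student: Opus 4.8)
The plan is to reuse verbatim the machinery and notation developed in the proof of Lemma \ref{S2NoRepeatedBondEdge}. As before, I would write $G = G_{m,n}$, suppose a bond-edge type $a$ is repeated somewhere along the appended branch, and split that branch into the $y$ vertices nearest $H$ (up to and including the vertex carrying the first $a$-arm), the $k$ middle vertices, and the final $l$ vertices, so that $n = y+k+l$. The one genuinely new feature is the hypothesis $n>m$: this is exactly what reopens the equality loophole that was shown to be impossible when $n\le m$, so the entire argument amounts to pinning down precisely when a repeat can occur \emph{without} forcing a smaller complex.

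First I would settle the two structural claims. If $a$ labels three or more edges of the branch, then, viewing each such edge as an arrow pointing toward or away from $H$, the pigeonhole principle forces two of them to share an orientation; the same-orientation case of Lemma \ref{S2NoRepeatedBondEdge} then realizes a cycle together with a shortened copy of $G_{m,n}$, both of smaller order, which is forbidden in Scenario 2. Hence any bond-edge type appears at most twice, and for the identical reason the two occurrences must have opposite orientation.

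Next I would analyze a permitted (opposite-orientation) double through the graphs $G_1$ and $G_2$ of Lemma \ref{S2NoRepeatedBondEdge}, of orders $2l+k$ and $2(m+y)+k$. To avoid realizing any complex smaller than $G_{m,n}$ I need both of these to have order at least $m+n$; but the substitution argument already carried out in Lemma \ref{S2NoRepeatedBondEdge} shows that $G_1$ exceeds $m+n$ exactly when $G_2$ falls below it, and conversely. The only way to keep both orders $\ge m+n$ is therefore to have $G_1$, $G_2$, and $G_{m,n}$ all of equal order, which yields the key identity $l = m+y$; combined with $n=y+k+l$ this gives $k = n-m-2y$ and hence $0 \le y \le \tfrac{n-m}{2}$. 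Translating to edge positions measured outward from the cut-vertex (so the edge joining $H$ to the branch is position $1$), the first $a$-edge sits at position $y+1$ and the second at position $n-l+1 = n-m-y+1$. Using $y\ge 0$ and $y\le \tfrac{n-m}{2}$, both positions are at most $n-m+1$, which is exactly the assertion that the two repeated edges lie among the $n-m+1$ edges closest to the other $m$ vertices.

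Finally I would assemble the counting bound. The $m-1$ edges farthest from $H$ (positions $n-m+2,\dots,n$) cannot belong to any legal double, since every legal double is confined to the first $n-m+1$ positions; thus each of these edges must carry a bond-edge type shared with no other edge, contributing $m-1$ distinct types. The first $n-m+1$ edges use each type at most twice, contributing at least $\lceil\tfrac{n-m+1}{2}\rceil$ further types that are disjoint from the previous ones. Summing gives $B_2(G) \ge \lceil\tfrac{n-m+1}{2}\rceil + m-1$, after which $T_2(G) \ge B_2(G)+1 \ge \lceil\tfrac{n-m+1}{2}\rceil + m$ follows from the inequality $B_2(G)+1 \le T_2(G)$ recorded from Theorem 2 of \cite{mintiles}. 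The main obstacle I anticipate is not conceptual but a matter of careful bookkeeping: fixing the edge-indexing convention (in particular, treating the attaching edge as part of the branch, which is what makes the leftover count $m-1$ rather than $m-2$) and verifying rigorously that the ``far'' bond-edge types are genuinely disjoint from the ``near'' ones, i.e. that no single type can straddle the two regions without forcing a smaller complex.
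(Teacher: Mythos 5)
Your proposal is correct and takes essentially the same route as the paper's proof: it reuses the $y,k,l$ decomposition and the $G_1$/$G_2$ order comparison from Lemma \ref{S2NoRepeatedBondEdge} to derive the equal-order condition $l=m+y$, uses pigeonhole to cap repetitions at two with opposite orientation, confines any legal double to the $n-m+1$ edges nearest the cut-vertex, and then counts exactly as the paper does. The only (harmless) difference is the final tile bound, which you get from $T_2(G)\geq B_2(G)+1$, whereas the paper argues directly that repeating a tile type on two path vertices would repeat two bond-edge types; the paper's version gives the slightly stronger fact, used later in Propositions \ref{prop:LollipopEvenS2mediumpath} and \ref{prop:LollipopOddS2mediumpath}, that the path vertices alone require $\lceil\frac{n-m+1}{2}\rceil+m$ distinct tile types.
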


\begin{proof}
Suppose bond-edge type $a$ is used to label two edges in the appended path of $G$. The proof of Lemma \ref{S2NoRepeatedBondEdge} shows that in order to avoid the realization of a graph strictly smaller than $G$ it is necessary that $n > m,$ and the edges labeled with bond-edge type $a$ are labeled with opposite orientation and spaced along the path such that $l=m+y$, as shown in Figure \ref{fig:Lemmafig1}. Thus, $l \geq m$ and both occurrences of bond-edge type $a$ must occur on the $n-m+1$ edges of the path closest to the $m$ vertices not in the path. That is, they must not be among the $m-1$ edges of the path farthest from $H$ as shown in Figure \ref{fig:Lemmafig1}. Note that a bond-edge type cannot be used to label more than two edges of the path, since this would necessarily result in two edges with matching orientation. This implies at most $\lfloor \frac{n-m+1}{2} \rfloor$ bond-edge types can be used twice and at least $\lceil \frac{n-m+1}{2}\rceil$ bond-edge types are needed to label the $n-m+1$ edges of the path closest to the $m$ vertices not in the path, resulting in a minimum of $\lceil \frac{n-m+1}{2}\rceil + m -1$ bond-edge types to label the entire path. Since labeling two vertices of the path with the same tile type repeats two bond-edge types, a minimum of $\lceil \frac{n-m+1}{2}\rceil + m$ tile types must be used to label the vertices of the path.
\end{proof}

The final lemma of this section addresses Scenario 3, in which limitations are more severe, and thus the comparative length of the path is less relevant.

\begin{lemma} \label{S3NoRepeatedBondEdge}
Let $G$ be a graph that is not a path and consists of a vertex-induced subgraph $H$ of order $m$ connected to a vertex-induced path subgraph of order $n$ via a single cut-vertex. In order to satisfy the requirements of Scenario 3, a distinct bond-edge type must be used to label each edge of the path subgraph. Thus, $B_3(G) \geq n,$ and $T_3(G) \geq n$.
\end{lemma}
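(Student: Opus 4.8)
The plan is to argue by contradiction, reusing the orientation case analysis already developed in the proof of Lemma~\ref{S2NoRepeatedBondEdge} and adjoining the one extra restriction that separates Scenario~3 from Scenario~2. Suppose some bond-edge type $a$ labels two of the $n$ edges running from the cut-vertex out to the terminal vertex of the path. I would immediately split into the same two orientation cases used for the earlier lemma. If the two edges share an orientation, then the pot realizes a strictly smaller complete complex (the shortened graph together with the $k$-vertex cycle of Figure~\ref{fig:Lemmafig1}); this already violates Scenario~3's prohibition on smaller complexes, so no additional argument is needed in this case.

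The crux is the opposite-orientation case, which Lemma~\ref{S2RepeatedBondEdge} showed can be permissible in Scenario~2. Here the pot realizes the graphs $G_1$ (a path of order $2l+k$) and $G_2$ (of order $2(m+y)+k$) from Figure~\ref{fig:Lemmafig2}. I would transplant the order comparison from Lemma~\ref{S2NoRepeatedBondEdge}: either at least one of $G_1, G_2$ is strictly smaller than $G$, contradicting Scenario~3 at once, or neither is smaller, in which case that same comparison (if one exceeds $G$ the other falls below it) forces both $G_1$ and $G_2$ to have order exactly equal to $G$. This is precisely where the hypothesis that $G$ is not a path earns its keep: $G_1$ is always a path, so when it matches the order of $G$ it is necessarily non-isomorphic to $G$, and its realization violates the second (isomorphism) restriction of Scenario~3. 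Thus every orientation case produces a contradiction, no bond-edge type may be repeated along the path, and each of the $n$ path edges requires its own type, giving $B_3(G) \geq n$.

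To obtain $T_3(G) \geq n$ I would then show that the $n$ path vertices must carry pairwise distinct tile types. As noted in the proofs of Lemmas~\ref{S2NoRepeatedBondEdge} and~\ref{S2RepeatedBondEdge}, assigning the same tile type to two path vertices forces two edges of the path to share a bond-edge type; since we have just established that no bond-edge type repeats, all tiles along the path must be distinct, the degree-one terminal vertex being automatically distinct from the degree-two interior vertices. Hence $T_3(G) \geq n$.

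I expect the main obstacle to be articulating cleanly why the boundary configuration $l = m+y$, which was harmless in Scenario~2 because it realizes no strictly smaller graph, becomes fatal in Scenario~3. The entire weight of the argument in that subcase rests on two facts that must be stated precisely: that $G_1$ is unconditionally a path, and that $G$ is assumed not to be one, so that a same-order $G_1$ is guaranteed to be a non-isomorphic competitor forbidden by Scenario~3.
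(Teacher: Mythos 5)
Your proposal is correct and follows essentially the same route as the paper: both reuse the case analysis from the proof of Lemma~\ref{S2NoRepeatedBondEdge} (noting it still applies when $n>m$), observe that any realized graph is either strictly smaller or of equal order, and dispose of the equal-order case using the hypothesis that $G$ is not a path. The only cosmetic difference is that the paper phrases the isomorphism exclusion as ``$H$ is necessarily not $P_m$'' while you phrase it directly as ``$G_1$ is a path but $G$ is not,'' which are equivalent statements of the same fact.
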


\begin{proof} The proof of Lemma \ref{S2NoRepeatedBondEdge} applies even when $n > m$ to show the realization of a smaller or non-isomorphic graph when a bond-edge type is used to label more than one edge of the path subgraph. Note that the only possibility for a graph realized in the proof to be isomorphic to $G$ is if $H$ is the path graph $P_m$. $H$ is necessarily not $P_m$ since if it were then $G$ would be $P_{m+n}$. 
\end{proof}

\begin{remark} $B_i(G)$ and $T_i(G)$ are known for all path graphs $P_n$ since path graphs are tree graphs \cite{mintiles}. Lemma \ref{S3NoRepeatedBondEdge} addresses all non-path graphs with appended path subgraphs. \end{remark}

Note that Lemmas \ref{S2NoRepeatedBondEdge}, \ref{S2RepeatedBondEdge}, and \ref{S3NoRepeatedBondEdge} only address repetition of bond-types within a path subgraph of a graph appended to the remainder of the graph via a single cut-vertex. These lemmas do not provide information about when bond-types used in the path can be repeated on other edges in the graph. Our results illustrate the differences that arise between two different types of non-path subgraphs with regard to ability to repeat bond-edge types between the path subgraph and the remainder of the graph while satisfying the restrictive conditions.\par

\section{Lollipop Graphs}\label{Lollipopgraphsection}

\begin{definition} A \textit{lollipop graph} is a complete graph $K_m$ connected to a path $P_n$ through a single bridging vertex of degree $m$. \cite{LollipopCite}
\end{definition} 

We denote a lollipop graph as $L_{m,n},$ where $m$ is the number of vertices in the complete graph subgraph and $n$ is the number of vertices in the extending path. Thus, the order of $L_{m,n}$ is $m + n$. We will often refer to the degree $m$ vertex as the \textit{bridging vertex}. Note that the bridging vertex is a cut-vertex and the path is a vertex-induced subgraph. \par 

$B_i(K_m)$ and $T_i(K_m)$ for $i=1,2,3$ are known and these values can be found in Table \ref{table:Complete Graph}. The complete graph is a vertex-induced subgraph of $L_{m,n}$ and, of course, is a maximum clique (an induced subgraph that is a complete graph \cite{vertexinduced}) of the graph. We will occasionally refer to the values of $T_i(K_m)$ and $B_i(K_m)$ in our proofs for $L_{m,n}$, as some of the same labeling strategies and lower bounds still apply. 

\renewcommand*{\arraystretch}{1.2}
\begin{table}[h]

\begin{tabular}{| l | c | c| }
\hline
\renewcommand*{\arraystretch}{1}
 & $B_i(K_m)$ & $T_i(K_m)$ \\ \hline
Scenario 1 &  $B_1(K_m) = 1$ & $T_1(K_m) = \left\{ \begin{array}{ccccc}
1 & \text{if} & m & \text{is} & \text{odd,} \\
2 & \text{if} & m & \text{is} & \text{even}
\end{array} \right.$\\ \hline
Scenario 2 & $B_2(K_m) =  \left\{ \begin{array}{ccccc}
1 & \text{if} & m & \text{is} & \text{even,} \\
2 & \text{if} & m & \text{is} & \text{odd}
\end{array} \right.$ & $T_2(K_m) = \left\{ \begin{array}{ccccc}
2 & \text{if} & m & \text{is} & \text{even,} \\
3 & \text{if} & m & \text{is} & \text{odd}
\end{array} \right.$ \\ \hline
Scenario 3 & $B_3(K_m) = m - 1$ & $T_3(K_m) = m$ \\ \hline 
\end{tabular}
\caption{Minimum tile and bond-edge type values for the complete graph family \cite{mintiles}} 
\label{table:Complete Graph}
\end{table}

\begin{remark} The graph $L_{3,n}$ is both a lollipop and a tadpole graph since $K_3 \cong C_3$. Results for this graph more naturally align with results for other tadpole graphs, and so these results are included in Section \ref{Tadpolegraphsection}. For the remainder of this work it is assumed $m >3$ for any $L_{m,n}$ graph. \end{remark}

\subsection{Scenario 1}

Recall that Scenario 1 represents the least restrictive scenario, in which all other graphs are allowable as constructions from a given pot realizing the target graph.

\begin{remark}
For all graphs $G$, $B_1(G) = 1$ \cite{mintiles}.
\end{remark}

\begin{proposition} \label{propscen1even1}
$$T_1(L_{m,n}) = \left\{ \begin{array}{ccccc}
3 & \text{for} & n = 1 \\
4 & \text{for} & n >1
\end{array} \right.$$
\end{proposition}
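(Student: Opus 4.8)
The plan is to pin $T_1(L_{m,n})$ between matching bounds: the lower bound comes directly from the cited inequality $av(G)\le T_1(G)$, and the upper bound from an explicit single-bond-edge labeling that uses exactly one tile type per distinct vertex degree.

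First I would record the degree sequence of $L_{m,n}$. The $m-1$ non-bridging clique vertices have degree $m-1$, the bridging vertex has degree $m$, and the appended path contributes a leaf of degree $1$ together with (when $n>1$) vertices of degree $2$. Since we assume $m>3$, the values $1,2,m-1,m$ are pairwise distinct, so $av(L_{m,1})=3$ (degrees $1,m-1,m$) and $av(L_{m,n})=4$ for $n>1$ (degrees $1,2,m-1,m$). Applying $av(G)\le T_1(G)$ then yields $T_1(L_{m,1})\ge 3$ and $T_1(L_{m,n})\ge 4$, which are exactly the claimed values.

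For the upper bound I would use that $B_1(L_{m,n})=1$, so a single bond-edge type $a$ suffices; labeling the graph then amounts to orienting every edge (an arrow toward the $\hat a$ end), and a vertex's tile type is determined by its pair (number of $a$ ends, number of $\hat a$ ends). Because vertices of different degree automatically receive different tile types, it is enough to exhibit an orientation in which all vertices of a common degree share a common out-degree; this produces exactly one tile type per degree class, matching $av$. The path is easy: orienting it as a directed path running outward from the bridging vertex gives every degree-$2$ vertex out-degree $1$ and in-degree $1$, and the leaf a single tile, so these contribute one tile type each.

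The one delicate point, and the main obstacle, is orienting the clique $K_m$ so that all $m-1$ non-bridging vertices receive equal out-degree, which I expect to split on the parity of $m$. When $m$ is odd, $K_m$ admits a regular (rotational) tournament in which every vertex has out-degree $(m-1)/2$; this already equalizes the non-bridging vertices, while the bridging vertex simply absorbs the extra path edge. When $m$ is even no regular tournament on $m$ vertices exists, so instead I would orient the sub-tournament on the $m-1$ non-bridging vertices regularly (an odd number of vertices, so each gets out-degree $(m-2)/2$) and direct every edge between a non-bridging vertex and the bridging vertex toward the bridging vertex, raising each non-bridging out-degree uniformly to $m/2$ while the bridging vertex takes up all the slack. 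In either parity the non-bridging vertices share one tile, the bridging vertex forms a second, and the path contributes the remaining one or two, giving $3$ tiles when $n=1$ and $4$ tiles when $n>1$. Combining this with the lower bound completes the proof.
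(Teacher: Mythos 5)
Your proposal is correct and takes essentially the same approach as the paper: the lower bound is obtained from $av(G) \le T_1(G)$ (Theorem 1 of the cited work), and the upper bound from a labeling with a single bond-edge type and one tile type per degree class, split by the parity of $m$. Your regular-tournament orientations are exactly the orientations encoded by the paper's explicit pots (for odd $m$ the tiles coincide verbatim; for even $m$ yours differs only in the direction of the single path edge at the bridging vertex), so the two arguments differ only in presentation.
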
 

\begin{proof}
\textit{Claim 1}: $T_1(L_{m,1}) = 3$. \par
In the $K_m$ subgraph, there are $m-1$ vertices of degree $m - 1$ and one vertex of degree $m$ (the bridging vertex), so the two degrees of vertices found in the $K_m$ subgraph will have opposite parity. When $n=1$ the path subgraph will consist of a single vertex with an odd degree of 1. Therefore, $av(L_{m,1}) = 3$, $ov(L_{m,1}) = 2$, and $ev(L_{m,1}) = 1$. By Theorem 1 of \cite{mintiles}, $3 \leq T_1(L_{m,1}) \leq 5$. The lower bound is realized by the following pots for even and odd $m$ values. Example labelings using directed edges to illustrate bond-edge orientation are shown in Figure \ref{fig:lollipopS1claim2}.
\begin{equation} \label{eq:Scen1even} P_{(2k,1)} = \{t_1 = \{\hat{a}^m\}, t_2 = \{ \hat{a}^{\frac{m}{2}- 1}, a^{\frac{m}{2}}\}, t_3 = \{\hat{a}\}\} \end{equation}

\begin{equation} \label{eq:Scen1odd} P_{(2k+1,1)} = \{t_1 = \{\hat{a}^{\lfloor{\frac{m}{2}} \rfloor}, a^{\lceil{\frac{m}{2}} \rceil}\}, t_2 = \{ \hat{a}^{\lfloor{\frac{m}{2}} \rfloor}, a^{\lfloor{\frac{m}{2}} \rfloor}\}, t_3 = \{\hat{a}\}\}\end{equation}

\begin{figure}[h!]
     \centering
	 \begin{tikzpicture}[transform shape, scale = 0.7]
 \node[main node] (a) at (0,0) {$t_1$};
 \node[main node] (b) at (-1,-2) {$t_2$};
 \node[main node](c) at (-3,-2) {$t_2$};
 \node[main node] (d) at (-4,0) {$t_2$};
 \node[main node] (e) at (-3,2) {$t_2$};
 \node[main node] (f) at (-1,2) {$t_2$};
 \node[main node] (g) at (2,0) {$t_3$};
		 
\path[draw,thick,color=red,->]
(a) edge node []{} (c)
(a) edge node []{} (d)
(a) edge node []{} (e)
(a) edge node []{} (f)
(a) edge node []{} (g)
(a) edge node []{} (b)
(b) edge node []{} (d)
(b) edge node []{} (f)
(c) edge node []{} (b)
(c) edge node []{} (d)
(d) edge node []{} (e)
(d) edge node []{} (f)
(e) edge node []{} (b)
(e) edge node []{} (c)
(f) edge node []{} (c)
(f) edge node []{} (e);

 \node[main node] (h) at ( 9,0) {$t_1$};
 \node[main node] (i) at (7.62
, -1.90) {$t_2$};
 \node[main node](j) at (5.38
,-1.17) {$t_2$};
 \node[main node] (k) at (5.38
,1.17) {$t_2$};
 \node[main node] (l) at (7.62, 1.90
) {$t_2$};
\node[main node] (m) at ( 11,0) {$t_3$};

\path[draw,thick,color=red,->]
(h) edge node []{} (m)
(h) edge node []{} (j)
(h) edge node []{} (l)
(i) edge node []{} (h)
(i) edge node []{} (k)
(j) edge node []{} (i)
(j) edge node []{} (l)
(k) edge node []{} (j)
(k) edge node []{} (h)
(l) edge node []{} (k)
(l) edge node []{} (i)
;
\end{tikzpicture}

\caption{Scenario 1 labeling of $L_{6,1}$ and $L_{5,1}$} 
\label{fig:lollipopS1claim2}
\end{figure}
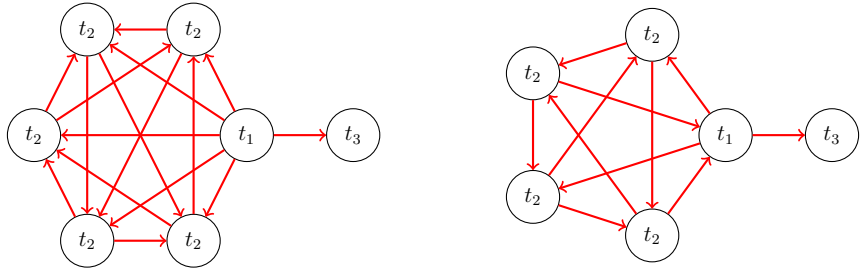

\textit{Claim 2}: $T_1(L_{m,n}) = 4$ for $n > 1$.\par
In $L_{m,n}$ with $n>1$, there are $m-1$ vertices of degree $m-1$, a single vertex of degree $m$, $n-1$ vertices of degree 2, and a single vertex of degree 1 at the end of the path. Thus, $ov(L_{m,n}) = 2$, and $ev(L_{m,n}) = 2$. By Theorem 1 of \cite{mintiles}, $4 \leq T_1(L_{m,n}) \leq 6$. The lower bound is realized by the following pots for even and odd $m$ values. Example labelings of $L_{4,2}$ and $L_{5,3}$ are shown in Figure \ref{fig:lollipopS1claim3}.
$$P_{(2k,n)} = \{t_1 = \{\hat{a}^m\}, t_2 = \{ \hat{a}^{\frac{m}{2}- 1}, a^{\frac{m}{2}}\}, t_3 = \{a, \hat{a}\}, t_4 = \{\hat{a}\}\}$$
\begin{equation*}  P_{(2k+1,n)} = \{t_1 = \{\hat{a}^{\lfloor{\frac{m}{2}} \rfloor}, a^{\lceil{\frac{m}{2}} \rceil}\}, t_2 = \{ \hat{a}^{\lfloor{\frac{m}{2}} \rfloor}, a^{\lfloor{\frac{m}{2}} \rfloor}\}, t_3 = \{\hat{a}, a\}, t_4 = \{\hat{a}\}\} \end{equation*}

\begin{figure}[h!]
     \centering
	 \begin{tikzpicture}[transform shape, scale = 0.7]
 \node[main node] (a) at (0,0) {$t_1$};
 \node[main node] (b) at (-2,-2) {$t_2$};
 \node[main node] (c) at (-4,0) {$t_2$};
 \node[main node] (d) at (-2,2) {$t_2$};
 \node[main node] (e) at (2,0) {$t_3$};
 \node[main node] (f) at (4,0) {$t_4$};
		 
\path[draw,thick,color=red,->]
(a) edge node []{} (b)
(a) edge node []{} (c)
(a) edge node []{} (d)
(a) edge node []{} (e)
(b) edge node []{} (c)
(c) edge node []{} (d)
(d) edge node []{} (b)
(e) edge node []{} (f);
\end{tikzpicture} \hspace{8mm}
\begin{tikzpicture}[transform shape, scale = 0.7]
\node[main node] (h) at ( 9,0) {$t_1$};
 \node[main node] (i) at (7.62
, -1.90) {$t_2$};
 \node[main node](j) at (5.38
,-1.17) {$t_2$};
 \node[main node] (k) at (5.38
,1.17) {$t_2$};
 \node[main node] (l) at (7.62, 1.90
) {$t_2$};
\node[main node] (m) at ( 11,0) {$t_3$};
\node[main node] (n) at ( 13,0) {$t_3$};
\node[main node] (o) at ( 15,0) {$t_4$};

\path[draw,thick,color=red,->]
(h) edge node []{} (m)
(h) edge node []{} (j)
(h) edge node []{} (l)
(i) edge node []{} (h)
(i) edge node []{} (k)
(j) edge node []{} (i)
(j) edge node []{} (l)
(k) edge node []{} (j)
(k) edge node []{} (h)
(l) edge node []{} (k)
(l) edge node []{} (i)
(m) edge node []{} (n)
(n) edge node []{} (o)
;
\end{tikzpicture}
\caption{Scenario 1 labeling of $L_{4,2}$ and $L_{5,3}$} 
\label{fig:lollipopS1claim3}
\end{figure}
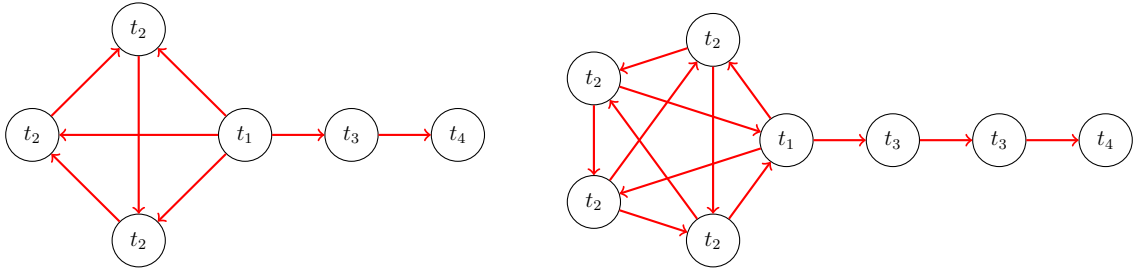
\end{proof}

\subsection{Scenario 2}\label{sec:LollipopS2}
Recall that Scenario 2 requires that no graphs of smaller order than the target graph can be realized by the proposed pot. \par

\begin{proposition}\label{prop:LollipopEvenS2}
$B_{2}(L_{2k, n}) = n$ and $T_{2}(L_{2k, n}) = n+2$ for $n \leq 2k$.
\end{proposition}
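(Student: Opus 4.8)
The plan is to prove matching lower and upper bounds for each invariant. Since $n \leq 2k = m$, the lower bounds come almost for free: Lemma \ref{S2NoRepeatedBondEdge} gives $B_2(L_{2k,n}) \geq n$ and tells us that the $n$ path vertices must carry $n$ pairwise distinct tile types. To improve the tile bound to $n+2$, I would argue by degree. Every path tile has at most two half-edges, while each vertex of the clique $K_{2k}$ has degree at least $m-1 \geq 3$; hence no clique tile can coincide with a path tile. Furthermore the bridging vertex has degree $m$ whereas the other $m-1$ clique vertices have degree $m-1$, so the clique contributes at least two more distinct tile types. This yields $T_2(L_{2k,n}) \geq n+2$.

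For the upper bounds I would construct a single pot realizing both values at once. Label the $n$ edges of the appended path (including the edge at the bridging vertex) with distinct types $a_1, \dots, a_n$, and take the path tiles $s_i = \{\hat a_i, a_{i+1}\}$ for $1 \leq i \leq n-1$ together with the leaf tile $s_n = \{\hat a_n\}$. The decisive idea is to repeat the \emph{last} path type $a_n$ on every edge of the clique, using the bridging tile $t_1 = \{a_1, a_n^{m-1}\}$ and the common non-bridging tile $t_2 = \{a_n^{\frac m2 - 1}, \hat a_n^{\frac m2}\}$. This pot uses exactly $n$ bond-edge types and $n+2$ tiles. It realizes $L_{2k,n}$ just as even $K_m$ is realized with a single type: the bridging vertex sends an unhatted $a_n$ to each of the $m-1$ clique vertices, the remaining clique arms are matched by an Eulerian orientation of $K_{m-1}$ (each such vertex retaining $\frac m2 - 1$ hatted and $\frac m2 - 1$ unhatted arms), and the arm $a_1$ of $t_1$ initiates the path $s_1, s_2, \dots, s_n$.

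The main work, and the genuine obstacle, is verifying the Scenario 2 requirement that no complete complex of order less than $m+n$ is realizable; this is subtle because the spectrum of the pot carries a degree of freedom. I would argue entirely through the bond-balance equations, which every complete complex must satisfy. For each $i \leq n-1$ the hatted arm $\hat a_i$ appears only on $s_i$ and the unhatted arm $a_i$ appears only on $t_1$ (when $i=1$) or on $s_{i-1}$ (when $i \geq 2$); consequently any non-empty complex must contain $t_1$, and if $t_1$ occurs with multiplicity $p$ then the balance equations force each of $s_1, \dots, s_{n-1}$ to occur with multiplicity $p$ as well. Writing $r$ for the multiplicity of $t_2$, the $a_n$-balance equation then pins the multiplicity of $s_n$ to $mp - r$, and summing all multiplicities gives total order $p + r + (n-1)p + (mp - r) = (m+n)p$. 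Since $p \geq 1$, every realizable complex has order a positive multiple of $m+n$, so none is smaller than the target, as required.

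I expect the crux to be exactly the choice to repeat $a_n$ rather than any other type. If one instead repeats the type $a_j$ used on the $j$-th path edge with $j < n$, then the clique tiles together with $s_1, \dots, s_{j-1}$ close off into a complete complex of order $m + j < m + n$, a forbidden smaller construction. Repeating $a_n$ is the unique safe choice because trading copies of $s_n$ against copies of $t_2$ (via the relation $mp - r$) leaves the total $r + (mp-r) = mp$, and hence the total order $(m+n)p$, invariant. Establishing this order identity is the heart of the proof; by comparison the lower bounds and the realizability of $L_{2k,n}$ itself are routine.
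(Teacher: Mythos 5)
Your proposal is correct and is essentially the paper's own proof: your pot is exactly the pot (\ref{eq:LollipopEvenS2pot}) of the paper up to renaming bond-edge types (your $a_n$ is the paper's $a_1$, your $a_i$ is the paper's $a_{i+1}$, and your tiles $s_i$ are the paper's $t_{i+2}$), and your lower-bound arguments (Lemma \ref{S2NoRepeatedBondEdge} for $B_2 \geq n$ plus the degree comparison forcing two extra clique tiles) are the same as the paper's. The only difference is presentational: where the paper writes down $M(P)$ and $\mathcal{S}(P)$ and argues that the least common multiple of the tile-proportion denominators is at least $2k+n$, you run the same cohesive-end balance equations with integer multiplicities and conclude that every nonempty complete complex has order exactly $(m+n)p$ for some $p \geq 1$ --- a slightly tighter rendering of the same computation, since it handles the free variable in the spectrum (the paper's $r_{n+2}$, your $r$) explicitly rather than by appeal to denominators.
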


\begin{proof} 
By Lemma \ref{S2NoRepeatedBondEdge} $B_2(L_{2k,n})\geq n$ and $T_2(L_{2k,n}) \geq n$, with at least $n$ distinct tile types needed in the path. The vertices in the path are of different degrees than the vertices in the $K_{2k}$ subgraph, so at least two additional tile types are needed. Therefore, $T_2(L_{2k,n}) \geq n+2$. The lower bounds are achieved by the following pot. An example labeling of $L_{6,3}$ is shown in Figure \ref{fig:L6,3Scen2}.
\begin{equation}\label{eq:LollipopEvenS2pot} \begin{split} P = \{t_1 = \{a^{2k-1}_1, a_2\}, t_2 = \{ a_1^{k- 1}, \hat{a}^{k}_1\}, t_i = \{\hat{a}_{i-1}, a_i\} \text{ for } 3 \leq i \leq  n, \\ t_{n+1}=\{\hat{a}_n,a_1\},   t_{n+2} = \{\hat{a}_1\}\} \end{split} \end{equation}

The construction matrix and spectrum of $P$ follow.
$$M(P) = \begin{bmatrix} 
2k-1 & -1 & 0 & \cdots & 0 & 1 & -1 & 0  \\ 
1 & 0 & -1 & \ddots & 0 & 0 & 0 & 0 \\ 
0 & 0 & 1 & \ddots & 0 & 0 & 0 & 0  \\ 
\vdots & \vdots &  & \ddots & -1 & 0 & 0 & 0 \\
0 & 0 & \cdots & 0 & 1  & -1 & 0 & 0 \\
1 & 1 & 1 & \cdots & 1 & 1 & 1 & 1 \end{bmatrix}$$

\begin{equation*} \begin{split} \mathcal{S}(P)=\left\{\left\langle r_1 = \frac{1}{2k + n}, r_2 = \frac{2k}{2k+n} - r_{n+2},  r_3 = \frac{1}{2k+n} \text{ for } 3 \leq i \leq n+1, \right\rangle \mid r_{n+2} \in \mathbb{Q}^+\right\}. \end{split} \end{equation*} 

The least common multiple of the vector component denominators in $\mathcal{S}(P)$ is at least $2k+ n$.  Therefore, no graphs of order smaller than $L_{2k,n}$ are realized by the pot.
\end{proof}

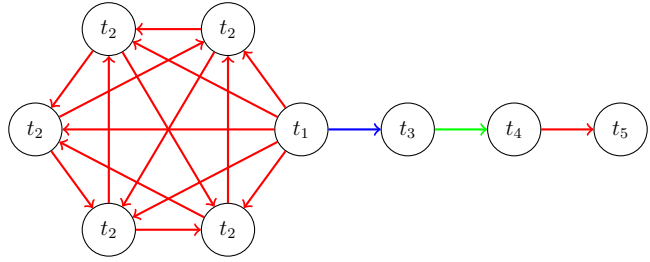
\begin{figure}[h!]
     \centering
 \begin{tikzpicture}[transform shape, scale = 0.7]

 \node[main node] (a) at ( 9,0) {$t_1$};
 \node[main node] (b) at (7.62
, -1.90) {$t_2$};
 \node[main node](c) at (5.38
,-1.90) {$t_2$};
 \node[main node] (d) at (4,0) {$t_2$};
 \node[main node] (e) at (5.38
,1.90) {$t_2$};
 \node[main node] (f) at (7.62, 1.90
) {$t_2$};
\node[main node] (g) at ( 11,0) {$t_3$};
\node[main node] (h) at ( 13,0) {$t_4$};
\node[main node] (i) at ( 15,0) {$t_5$};

\path[draw,thick,color=red,->]
(a) edge node []{} (b)
(a) edge node []{} (c)
(a) edge node []{} (d)
(a) edge node []{} (e)
(a) edge node []{} (f)
(b) edge node []{} (d)
(e) edge node []{} (b)
(c) edge node []{} (b)
(c) edge node []{} (e)
(d) edge node []{} (c)
(e) edge node []{} (d)
(b) edge node []{} (f)
(f) edge node []{} (c)
(f) edge node []{} (e)
(d) edge node []{} (f)
(h) edge node []{} (i);

\path[draw,thick,color=blue,->]
(a) edge node []{} (g);

\path[draw,thick,color=green,->]
(g) edge node []{} (h);
\end{tikzpicture}

\caption{Scenario 2 labeling of $L_{6,3}$ } 
\label{fig:L6,3Scen2}
\end{figure}

\begin{example}\label{ex:L2k,1} The graphs $L_{2k,1}$ are a special case of Proposition \ref{prop:LollipopEvenS2}, since there are no degree 2 vertices in the path, only the vertex of degree 1; thus only one bond-edge type will be used as implied by the pot given in (\ref{eq:LollipopEvenS2pot}). Here we illustrate concretely that $B_{2}(L_{2k, 1}) = 1$ and $T_{2}(L_{2k, 1}) = 3$.\par 

We claim that the pot achieving the value of $T_{1}(L_{2k,n})$ given in (\ref{eq:Scen1even}), which aligns with the pot given in (\ref{eq:LollipopEvenS2pot}), also satisfies the requirements of Scenario 2 and thus provides the value for  $T_{2}(L_{2k, 1})$ as well. The construction matrix and spectrum of $P$ show that nothing smaller than the target graph can be realized by the pot. 
$$M(P) = \begin{bmatrix} 
2k & -1 & -1 & 0  \\ 
1 & 1  & 1 & 1 \end{bmatrix}$$

$$\mathcal{S}(P) = \left\{ \left\langle \frac{1}{2k + 1}, \frac{2k}{2k + 1} - r_3, r_3
\right\rangle \mid r_3 \in \mathbb{Q}^+ \right\}$$
The least common multiple of the vector component denominators in $\mathcal{S}(P)$ is at least $2k+1$, which is the order of the target graph. Therefore, by Proposition 3 of \cite{mintiles}, nothing smaller can be realized by the pot.
\end{example}

Unlike in the case of $L_{2k,1}$ as demonstrated in Example \ref{ex:L2k,1}, lollipop graphs with an odd complete graph subgraph and length 1 path do not follow the same pattern for minimum number of bond-edge types as those with paths of length at least 2. Hence, we provide the next proposition as a separate case.

\begin{proposition} \label{prop:LollipopS2path1} 
$B_2(L_{2k+1,1})=2$.
\end{proposition}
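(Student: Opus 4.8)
The plan is to establish the two bounds $B_2(L_{2k+1,1}) \ge 2$ and $B_2(L_{2k+1,1}) \le 2$ separately, with the lower bound carrying the conceptual weight.

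For the lower bound I would show that a single bond-edge type cannot satisfy Scenario 2. Suppose toward a contradiction that some pot $P$ with one bond-edge type $a$ realizes $L_{2k+1,1}$ and nothing smaller. For each vertex $v$ let $z_v$ be its \emph{net value}, the number of unhatted arms minus the number of hatted arms; since every bond-edge contributes one $a$ and one $\hat a$, we have $\sum_v z_v = 0$. Because swapping every $a$ with $\hat a$ produces an equivalent pot, I may assume the pendant (degree-$1$) tile is $\{\hat a\}$, of net $-1$. Two observations drive the argument: (i) any tile of net $0$ self-bonds (its equal numbers of $a$ and $\hat a$ arms pairing into loops) into a single looped vertex, a complete complex of order $1$; and (ii) any tile of net $c>0$ together with $c$ copies of the pendant tile $\{\hat a\}$ bonds into a complete complex of order $c+1$ (the $c$ pendants absorbing $c$ unhatted arms, the rest of the tile's arms forming loops). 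Since the target has order $2k+2$, neither (i) nor an instance of (ii) with $c+1 < 2k+2$ may occur.

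From (ii), every positive-net tile must have net at least $2k+1$. The $2k$ clique vertices have degree $2k$, hence net at most $2k$, while the bridge has degree $2k+1$; so the only tile that can be positive is the bridge, forced to have net exactly $2k+1$ (all arms unhatted). At least one positive-net tile exists, because $\sum_v z_v = 0$ while the pendant is negative; thus the bridge has net $2k+1$. By (i) no clique vertex has net $0$, and by the previous sentence none is positive, so each of the $2k$ clique vertices has net at most $-2$. But then $\sum_v z_v \le (2k+1) + 2k\cdot(-2) + (-1) = -2k < 0$, contradicting $\sum_v z_v = 0$. Hence one bond-edge type is impossible and $B_2(L_{2k+1,1}) \ge 2$.

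For the upper bound I would exhibit an explicit pot with two bond-edge types $a_1,a_2$ realizing $L_{2k+1,1}$ and verify it realizes nothing smaller. Building on a two-bond-edge-type labeling of $K_{2k+1}$ (which realizes the value $B_2(K_{2k+1})=2$ from Table \ref{table:Complete Graph}), I would orient the clique edges with $a_1$ so that arm counts balance, attach the pendant edge with $a_2$ (pendant tile $\{\hat a_2\}$), and place $a_2$ on one further bridge arm to repair parity. I would then record the construction matrix $M(P)$, compute $\mathcal S(P)$, and show the least common denominator of every nonnegative rational solution is exactly $2k+2$, so no smaller complex is realized. The main obstacle is this verification rather than the construction itself: exactly as in Proposition \ref{prop:LollipopEvenS2} and Example \ref{ex:L2k,1}, the spectrum of a two-bond-type pot will generally carry a free variable, so I must rule out every admissible assignment of that variable yielding denominators that clear below $2k+2$. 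A secondary subtlety, already dispatched above, is the extremal lower-bound configuration in which the bridge is oriented entirely outward; this is precisely the case that the net-sum identity $\sum_v z_v = 0$ eliminates.
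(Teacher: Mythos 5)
Your lower bound is correct, complete, and takes a genuinely different route from the paper's. The paper also begins by excluding net-zero tiles (your observation (i)), but then works through the construction matrix: a half-edge count shows the degree-$2k$ tiles cannot all have nets of the same sign, and two oppositely-signed even-net tiles yield a solution of $M(P)$ supported on those two tiles alone whose reduced denominators are less than $2k+1$, so the pot realizes a smaller complex. You instead use the pendant tile $\{\hat{a}\}$ as an absorber: a tile of net $c>0$ together with $c$ pendants forms a complete complex of order $c+1$, so every positive-net tile must have net at least $2k+1$; degree and parity bounds then force the bridge tile to net exactly $2k+1$ and every clique tile to net at most $-2$, contradicting $\sum_v z_v = 0$. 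This is more elementary (no row reduction, no global half-edge counting), and it exhibits the violating complexes explicitly, in the spirit of the bonding arguments the paper uses elsewhere (e.g., in Proposition \ref{prop:LollipopOddB2}). Both arguments are sound.

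The upper bound, however, has a genuine gap: for an equality you must exhibit a two-bond-edge-type pot and prove it realizes nothing of order less than $2k+2$, and that verification --- which you yourself defer as ``the main obstacle'' --- is exactly the substantive half in Scenario 2. Your sketched pot is never specified, and its most natural reading fails: if all clique edges carry $a_1$ oriented ``so that arm counts balance,'' then each non-bridge clique vertex away from your extra $a_2$ edge receives the tile $\{a_1^{k}, \hat{a}_1^{k}\}$, a net-zero tile that by your own observation (i) self-assembles into a complete complex of order 1, violating Scenario 2. (It is also unclear what placing $a_2$ on ``one further bridge arm'' accomplishes, since the bridge is then left with an odd number, $2k-1$, of $a_1$-arms.) The paper's pot sidesteps this by reusing $a_1$ on the pendant edge and unbalancing each clique tile: $P = \{t_1 = \{a_1^{2k+1}\}, t_2 = \{\hat{a}_1, \hat{a}_2^{k}, a_2^{k-1}\}, t_3 = \{\hat{a}_1, a_2^{k}, \hat{a}_2^{k-1}\}, t_4 = \{\hat{a}_1\}\}$; every solution of its construction matrix has $r_1 = \frac{1}{2k+2}$, so the least common denominator of any valid solution is at least $2k+2$ and no smaller complex can be realized. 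As written, your proposal establishes only $B_2(L_{2k+1,1}) \geq 2$.
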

\begin{proof}
In a similar fashion to the proof that $B_2(K_{2k+1})\geq 2$ given in \cite{mintiles}, we argue that $B_2(L_{2k+1,1})\geq 2$. By way of contradiction, assume there is a pot with just one bond-edge type realizing $L_{2k+1,1}$. The construction matrix of $P$ must have the form
$$M(P) = \begin{bmatrix} z_{1,1} & z_{1,2} & \cdots & z_{1,p} & 0 \\ 1 & 1 & \cdots & 1 & 1 \end{bmatrix}$$
Note that $z_{1,j} \neq 0$ for all $j$, since two tile types will be of odd degree, and the remaining tile types of even degree cannot have an equal number of hatted and un-hatted cohesive-ends. Such a tile could realize a graph of order 1 with $k$ loop edges. Re-ordering tile numbers if necessary, we may assume $t_{p-1}$ and $t_p$ are the tiles of odd degrees $1$ and $2k+1$.\par
Furthermore, we claim $z_{1,1},...,z_{1,p-2}$ cannot all have the same sign. For the sake of contradiction and without loss of generality, assume that $z_{1,j}>0$ for all $1 \leq j \leq p-2$. Then each of $t_{1},...,t_{p-2}$ must have at least $k+1$ arms labeled with the un-hatted version of the bond-edge type. In order for this pot to realize $L_{2k+1,1}$, this results in at least $2k(k+1)=2k^2+2k$ half-edges in the graph labeled with the un-hatted version of the bond-edge type. There are $4k^2+2k+2$ total half-edges in the graph, which leaves $2k^2+2<2k^2+2k$ edges to potentially be labeled with the hatted version of the bond-edge type, an insufficient amount. Therefore we may also re-order tile numbers such that $z_{1,1}>0$ and $z_{1,2}<0$. Thus, $M(P)$ is row equivalent to
$$\begin{bmatrix} 1 & 0 & \frac{-z_{1,3}+z_{1,2}}{z_{1,1}-z_{1,2}} & \cdots & \frac{-z_{1,p}+z_{1,2}}{z_{1,1}-z_{1,2}} & \frac{-z_{1,2}}{z_{1,1}-z_{1,2}} \\ 0 & 1 & \frac{z_{1,1}-z_{1,3}}{z_{1,1}-z_{1,2}} & \cdots & \frac{z_{1,1}-z_{1,p}}{z_{1,1}-z_{1,2}} & \frac{z_{1,1}}{z_{1,1}-z_{1,2}} \end{bmatrix}$$
and so has a solution of the form $\langle -z_{1,2}/(z_{1,1}-z_{1,2}), z_{1,1}/(z_{1,1}-z_{1,2}),0,...,0\rangle$. Since $z_{1,1}$ and $z_{1,2}$ are both non-zero, even, and have absolute value less than or equal to $2k$, this solution has the form $\langle a/n,b/n,0,...0 \rangle$ where $n < 2k+1$. Thus, $P$ realizes a graph of order $n<2k+1$.\par
The following pot with two bond-edge types realizes $L_{2k+1,1}$. An example labeling of $L_{5,1}$ is shown in Figure \ref{fig:L5,1Scen2}.
\begin{equation*}  P = \{t_1 = \{a_1^{2k+1}\}, t_2 = \{\hat{a_1}, \hat{a}^{\lfloor{\frac{2k+1}{2}\rfloor}}_2, a^{\lfloor\frac{2k+1}{2}\rfloor - 1}_2\}, t_3 = \{\hat{a_1}, a^{\lfloor{\frac{2k+1}{2}\rfloor}}_2, \hat{a}^{\lfloor\frac{2k+1}{2}\rfloor - 1}_2\}, t_4 = \{\hat{a_1}\}\}\end{equation*} 

The spectrum of $P$ follows. 

 $$\mathcal{S}(P)= \left\{\left\langle \frac{1}{2k + 2}, \frac{2k+1}{2(2k + 1)} - \frac{1}{2}r_4, \frac{2k+1}{2(2k + 1)} - \frac{1}{2}r_4, r_4 \right\rangle \mid r_4 \in \mathbb{Q}^+\right\}$$ 
 
The least common multiple of the vector component denominators in $\mathcal{S}(P)$ is at least $2k+2$. Therefore, no graphs of smaller order are realized by the pot. 
\end{proof}

\begin{figure}[h!]
     \centering
 \begin{tikzpicture}[transform shape, scale = 0.7]

 \node[main node] (a) at ( 9,0) {$t_1$};
 \node[main node] (b) at (7.62
, -1.90) {$t_2$};
 \node[main node](c) at (5.38
,-1.17) {$t_2$};
 \node[main node] (d) at (5.38
,1.17) {$t_3$};
 \node[main node] (e) at (7.62, 1.90
) {$t_3$};
\node[main node] (f) at ( 11,0) {$t_4$};

\path[draw,thick,color=red,->]
(a) edge node []{} (b)
(a) edge node []{} (c)
(a) edge node []{} (d)
(a) edge node []{} (e)
(a) edge node []{} (f);

\path[draw,thick,color=blue,->]
(b) edge node []{} (d)
(b) edge node []{} (e)
(c) edge node []{} (b)
(c) edge node []{} (e)
(d) edge node []{} (c)
(e) edge node []{} (d);
\end{tikzpicture}

\caption{Scenario 2 labeling of $L_{5,1}$}
\label{fig:L5,1Scen2}
\end{figure}
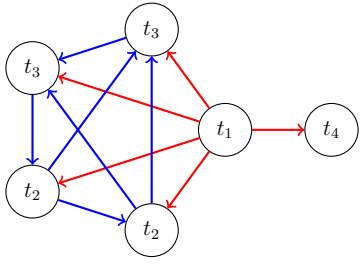

\begin{proposition}\label{prop:LollipopOddB2} $B_2(L_{2k+1,n})= n$ for $2 \leq n \leq 2k+1$. \end{proposition}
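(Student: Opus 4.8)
The plan is to prove $B_2(L_{2k+1,n})= n$ for $2 \leq n \leq 2k+1$ by establishing a matching lower bound and upper bound. For the lower bound, I would first invoke Lemma \ref{S2NoRepeatedBondEdge}: since the path subgraph has $n$ vertices connected via a single cut-vertex to the $K_{2k+1}$ subgraph of $m = 2k+1$ vertices, and here $n \leq 2k+1 = m$, the hypothesis $n \leq m$ of that lemma is satisfied. Hence no bond-edge type may be repeated within the path, giving $n$ distinct bond-edge types on the $n$ path edges and so $B_2(L_{2k+1,n}) \geq n$. The crux of the lower bound is then to argue that none of these $n$ path bond-edge types can be reused within the $K_{2k+1}$ subgraph in a way that would lower the total below $n$; but since Lemma \ref{S2NoRepeatedBondEdge} already forces $n$ distinct types, the lower bound $B_2 \geq n$ is immediate and requires no further work.

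For the upper bound $B_2(L_{2k+1,n}) \leq n$, the main task is to exhibit an explicit pot using exactly $n$ bond-edge types that realizes $L_{2k+1,n}$ and admits no smaller complete complex. I would construct a pot analogous to that in (\ref{eq:LollipopEvenS2pot}) for the even case, but carefully reusing one of the path bond-edge types inside the complete graph rather than introducing a separate type for the clique. Concretely, I expect to assign a bond-edge type, say $a_1$, to one edge of the path and simultaneously use $a_1$ to label the edges of the odd complete graph $K_{2k+1}$ via an Eulerian-style orientation (recalling from Table \ref{table:Complete Graph} that $B_2(K_{2k+1}) = 2$, so the clique alone needs two types, but one of those can coincide with a path type). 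The tiles for the interior path vertices would each carry one hatted and one un-hatted cohesive-end of consecutive types $\{\hat{a}_{i-1}, a_i\}$, the terminal path vertex carries a single $\hat{a}_n$, and the bridging vertex carries the high-degree clique arms together with one path arm.

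The technical heart of the upper bound is verifying via the construction matrix that no graph of order smaller than $m+n$ can be realized. I would write down $M(P)$ for the proposed pot, row-reduce to obtain the spectrum $\mathcal{S}(P)$, and confirm that every solution vector $\langle r_1,\dots,r_p\rangle$ has components whose least common denominator is at least $m+n = 2k+n+1$. As in Proposition \ref{prop:LollipopEvenS2} and Example \ref{ex:L2k,1}, I anticipate a one-parameter family of solutions with a free variable corresponding to the degree-1 terminal tile; the key computation is showing that the forced entry $r_1 = \frac{1}{2k+n+1}$ (or similar) pins the denominator, guaranteeing no smaller graph arises. I would then invoke Proposition 3 of \cite{mintiles} to conclude.

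The main obstacle I anticipate is the bookkeeping required to reuse a single bond-edge type both in the path and in the odd complete graph without inadvertently creating a smaller realizable complex or a non-target graph of the same order. Because $K_{2k+1}$ has odd order, its edges cannot be consistently oriented with a single type having net-zero flow at every vertex unless a second type is introduced (this is precisely why $B_2(K_{2k+1}) = 2$); the delicate point is arranging the reuse so that the path type $a_1$ doubles as the needed second clique type while the net cohesive-end counts still force the denominator to be $m+n$. I would resolve this by choosing the clique tile proportions so that the construction matrix row corresponding to type $a_1$ couples the clique tiles to the path in exactly one feasible ratio, mirroring the structure seen in (\ref{eq:LollipopEvenS2pot}) but with the extra constraint imposed by the odd clique.
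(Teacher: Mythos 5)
Your lower bound is correct and matches the paper exactly: since $n \leq 2k+1 = m$, Lemma \ref{S2NoRepeatedBondEdge} applies and gives $B_2(L_{2k+1,n}) \geq n$ with no further work. Your upper-bound strategy is also the right general idea: exhibit a pot with exactly $n$ bond-edge types in which the types needed for the $K_{2k+1}$ subgraph are drawn from the path types. The paper's pot (\ref{eq:LollipopOddB2medpathpot}) does this with \emph{two} shared types: $a_1$ labels every edge at the bridging vertex (including the first path edge) and $a_2$ labels the remaining clique edges as well as the second path edge. Be careful here: for the count to come out to $n$ you need \emph{both} clique types to coincide with path types; your phrasing (``one of those can coincide with a path type'') leaves the other clique type unaccounted for, which would yield $n+1$. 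Also, your stated reason that $B_2(K_{2k+1})=2$ --- that the odd clique ``cannot be consistently oriented with a single type having net-zero flow'' --- is backwards: every vertex of $K_{2k+1}$ has even degree $2k$, so an Eulerian orientation with net-zero flow at each vertex does exist; the actual obstruction in Scenario 2 is that tiles with balanced hatted/un-hatted counts realize smaller complexes (e.g., a single tile closing into $k$ loop edges).

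The genuine gap is in your verification step. You plan to row-reduce $M(P)$, find a one-parameter spectrum with a forced entry $r_1 = \frac{1}{2k+n+1}$, and conclude via the least common denominator, mirroring the even case. For a pot of this shape that computation does not go through: the paper computes the spectrum of its pot and finds \emph{two} degrees of freedom, and $r_1$ is not pinned (it equals $\frac{1}{2k+2}-\frac{n-1}{2k+2}r_{n+3}$). One of the degeneracies is structural and unavoidable in your design: the first path tile $\{\hat{a}_1, a_2\}$ has exactly the same net cohesive-end counts as the clique tile $t_3=\{\hat{a}_1, a_2^{k}, \hat{a}_2^{k-1}\}$ (net $-1$ in $a_1$, net $+1$ in $a_2$), so the construction matrix cannot distinguish them and their proportions trade off freely; the terminal degree-1 tile contributes the other free parameter. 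Because the least-common-denominator argument fails, the paper argues structurally instead: $t_1$ must appear in any complete complex (setting $r_1=0$ forces an invalid proportion), and its $2k+1$ arms force at least $2k+2$ tiles; if the first interior path tile $t_5$ ever bonds in, the chain of distinct path types forces at least $n-1$ further tiles, reaching the target order $2k+n+1$; and if $t_5$ never appears, the complex comes from the sub-pot $\{t_1,t_2,t_3,t_4\}$, whose spectrum has least common denominator at least $2(2k+1)+2 > 2k+n+1$. Your proposal needs this (or an equivalent) case analysis; the suggestion to ``choose the clique tile proportions so the matrix couples the clique tiles to the path in exactly one feasible ratio'' cannot remove a degeneracy caused by two tiles having identical columns in the construction matrix.
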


\begin{proof} 
By Lemma \ref{S2NoRepeatedBondEdge}, $B_2(L_{2k+1,n})\geq n$. The lower bound is achieved by the following pot. An example labeling of $L_{5,5}$ is shown in Figure \ref{fig:L5,5Scen2}. 

\begin{equation}\label{eq:LollipopOddB2medpathpot}  \begin{split} P = \{t_1 = \{a_1^{2k+1}\}, t_2 = \{\hat{a_1}, a^{k - 1}_2, \hat{a_2}^{k}\}, t_3 = \{\hat{a_1}, a^{k}_2, \hat{a}^{k - 1}_2\},\\ t_i = \{\hat{a}_{i-3}, a_{i-2}\} \text{ for } i = 4,...,n+2, t_{n+3} = \{\hat{a_{n}}\}\} \end{split} \end{equation} 

The construction matrix and spectrum of $P$ follow.

$$M(P) = \begin{bmatrix} 
2k+1 & -1 & -1 & -1 & 0 & \cdots & & \cdots & 0 \\ 
0 & -1 & 1 & 1 & -1 & 0 & \cdots & & 0 \\ 
\vdots & 0 & 0 & 0 & 1 & -1 & \ddots & & \vdots \\
 & &  &  & \ddots & \ddots & \ddots  &  &  \\
0 & \cdots & &  & \cdots & 0 & 1 & -1 & 0 \\
1 & 1 & \cdots & & \cdots & 1 & 1 & 1 & 1 \end{bmatrix}$$

 \begin{equation*} \begin{split} \mathcal{S}(P)=\left\{\left\langle r_1 = \frac{1}{2k+2} - \frac{n-1}{2k+2}(r_{n+3}), r_2 = \frac{2k+1}{2(2k+1)+2} - \frac{(2k+1)n+1}{2(2k+1)+2}(r_{n+3}), \right. \right. \\ r_3 = \frac{2k+1}{2(2k+1)+2} - \frac{(2k+1)n-2(2k+1)-1}{2(2k+1)+2}(r_{n+3}) - r_4, r_4, \\ \left. \left. r_i = r_{n+3} \text{ for } 5 \leq i \leq n+2, r_{n+3} \right\rangle \mid r_4, r_{n+3} \in \mathbb{Q}^+\right\} \end{split} \end{equation*} 

Note that the spectrum of this pot has two degrees of freedom, making it more difficult to use the spectrum to show a solution constructing a graph smaller than $L_{2k+1,n}$ does not exist. Instead, we proceed by considering the restrictive nature of the tile types.\par
First note that no complete complex can be realized by $P$ without the use of tile type $t_1$, since $t_1$ is the only tile with an arm labeled $a_1$ and any combination of other tiles bonded together will eventually include arms labeled $\hat{a_1}$. This can be further verified by setting $r_1=0$ in $\mathcal{S}(P)$, which gives a negative, and therefore invalid, tile proportion for $r_2$. Since $t_1$ must be included, any complete complex is of size at least $2k+2$, as $t_1$ has $2k+1$ arms that cannot form loop edges. \par

Suppose some subset of $\{t_2, t_3, t_4\}$ bonds to the $2k+1$ arms of $t_1$. If $t_5$ then bonds to one of $\{t_2, t_3, t_4\}$ (see Figure \ref{fig:B2oddLollprop}), the complex is of size at least $2k+1+1+(n-1) =2k+n+1$, the target graph order.

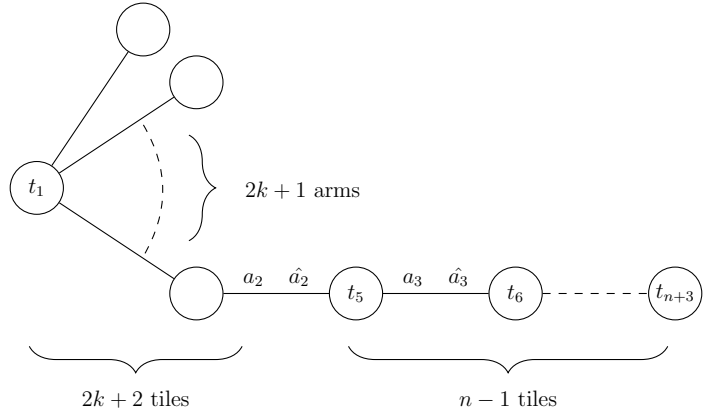
\begin{figure}[h!]
\centering
	 \begin{tikzpicture}[transform shape, scale = 0.7]
 \node[main node] (a) at (0,0) {$t_1$};
 \node[main node] (b) at (2,3) {};
 \node[main node] (c) at (3,2) {};
 \node[main node] (d) at (3,-2) {};
 \node[main node, fill=white, draw=white, minimum size = 0cm] (e) at (2,1.2) {};
 \node[main node, fill=white, draw=white, minimum size = 0cm] (f) at (2,-1.3) {};
\node[main node] (g) at (6,-2) {$t_5$};
\node[main node] (h) at (9,-2) {$t_6$};
\node[main node] (i) at (12,-2) {$t_{n+3}$};

 \node[main node, fill=white, draw=white, minimum size = 0cm] (j) at (5,-0.05) {$2k+1$ arms};

\path[draw]
(a) edge node []{} (b)
(a) edge node []{} (c)
(a) edge node []{} (d)
(d) edge node [above]{$a_2$ \hspace{2mm} $\hat{a_2}$} (g)
(g) edge node [above]{$a_3$ \hspace{2mm} $\hat{a_3}$} (h);

\path[draw, dashed]
(e) edge [bend left] node []{} (f)
(h) edge node []{} (i);

\draw [decorate,decoration={brace,amplitude=10pt},xshift=-4pt,yshift=0pt]
(4,-3) -- (0,-3) node [black,midway,xshift=0, yshift=-1cm] 
{$2k+2$ tiles};

\draw [decorate,decoration={brace,amplitude=10pt},xshift=-4pt,yshift=0pt]
(12,-3) -- (6,-3) node [black,midway,xshift=0, yshift=-1cm] 
{$n-1$ tiles};

\draw [decorate,decoration={brace,amplitude=10pt},xshift=-4pt,yshift=0pt]
(3,1) -- (3,-1) node [black,midway,xshift=10, yshift=0cm] {};
\end{tikzpicture}

\caption{Complex formed in proof of Proposition \ref{prop:LollipopOddB2}; $2k+1$ tiles bonded to $t_1$ must be $t_2, t_3 \text{ or } t_4$} 
\label{fig:B2oddLollprop}
\end{figure}

Note that, due to the bond-edge types on the tile arms, $t_i$ for $5 \leq i \leq n+3$ can only bond to the complex if $t_5$ has already been included. Thus, if $t_5$ is not included, the only possibility remaining for construction of a graph of order less than the target graph is if the complex is constructed from the pot $P' = \{t_1, t_2, t_3, t_4\}$. It is easy to see from $\mathcal{S}(P)$ that  $\mathcal{S}(P')$ is as follows.
 \begin{equation*} \begin{split} \mathcal{S}(P')=\left\{\left\langle r_1 = \frac{1}{2k+2}, r_2 = \frac{2k+1}{2(2k+1)+2}, r_3 = \frac{2k+1}{2(2k+1)+2} - r_4, r_4 \right\rangle \mid r_4 \in \mathbb{Q}^+\right\} \end{split} \end{equation*} 
Note that $2k+1$ shares no common factors with $2$ or $2k+2$, hence The least common multiple of the vector component denominators in $\mathcal{S}(P')$ is at least $2(2k+1)+2$, which is greater than $2k+n+1$, the target graph order, since $n \leq 2k+1$.
\end{proof}

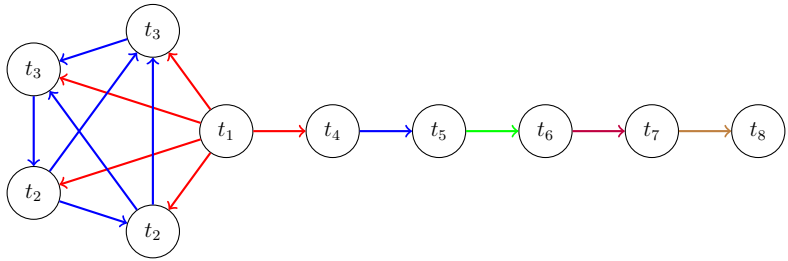
\begin{figure}[h!]
     \centering
 \begin{tikzpicture}[transform shape, scale = 0.7]

 \node[main node] (a) at ( 9,0) {$t_1$};
 \node[main node] (b) at (7.62
, -1.90) {$t_2$};
 \node[main node](c) at (5.38
,-1.17) {$t_2$};
 \node[main node] (d) at (5.38
,1.17) {$t_3$};
 \node[main node] (e) at (7.62, 1.90
) {$t_3$};
\node[main node] (f) at ( 11,0) {$t_4$};
\node[main node] (g) at ( 13,0) {$t_5$};
\node[main node] (h) at ( 15,0) {$t_6$};
\node[main node] (i) at ( 17,0) {$t_7$};
\node[main node] (j) at ( 19,0) {$t_8$};

\path[draw,thick,color=red,->]
(a) edge node []{} (b)
(a) edge node []{} (c)
(a) edge node []{} (d)
(a) edge node []{} (e)
(a) edge node []{} (f);

\path[draw,thick,color=blue,->]
(b) edge node []{} (d)
(b) edge node []{} (e)
(c) edge node []{} (b)
(c) edge node []{} (e)
(d) edge node []{} (c)
(e) edge node []{} (d)
(f) edge node []{} (g);

\path[draw,thick,color=green,->]
(g) edge node []{} (h);

\path[draw,thick,color=purple,->]
(h) edge node []{} (i);

\path[draw,thick,color=brown,->]
(i) edge node []{} (j);
\end{tikzpicture}

\caption{Scenario 2 labeling of $L_{5,5}$} 
\label{fig:L5,5Scen2}
\end{figure}

Unlike for $L_{2k,n}$ graphs, we have chosen to address $T_2(L_{2k+1,n})$ for $n \leq 2k+1$ in a separate proposition rather than combining the result with Proposition \ref{prop:LollipopOddB2}, since justification for the necessity of two different tile types in labeling the $K_{2k+1}$ subgraph is non-trivial.

\begin{proposition}\label{prop:LollipopOddT2} $T_2(L_{2k+1,n})=n+3$ for $n \leq 2k+1$.
\end{proposition}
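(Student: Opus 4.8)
The plan is to prove matching bounds. For the upper bound I would simply reuse the $(n+3)$-tile pots already exhibited: for $n\ge 2$ the pot constructed in the proof of Proposition~\ref{prop:LollipopOddB2} has exactly $n+3$ tile types and was shown there to realize $L_{2k+1,n}$ with no smaller complex, while the degenerate case $n=1$ is covered by the four-tile pot in the proof of Proposition~\ref{prop:LollipopS2path1}. Thus $T_2(L_{2k+1,n})\le n+3$ follows with no new work.

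For the lower bound I would count tiles by vertex degree. The appended path supplies $n$ vertices of degree at most $2$, and Lemma~\ref{S2NoRepeatedBondEdge} forces these to receive $n$ pairwise-distinct tile types. The bridging vertex has degree $2k+1\ge 5$, a value attained nowhere else in the graph, so its tile is a new $(n+1)$-st type. The whole difficulty is then to show that the $2k$ vertices of degree $2k$ cannot all share one tile type; this is exactly the ``$+3$ versus $+2$'' phenomenon separating the odd case from the even case of Proposition~\ref{prop:LollipopEvenS2}.

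To forbid a single degree-$2k$ tile I would argue by contradiction. Suppose all $2k$ non-bridging vertices of $K_{2k+1}$ carry one tile $t_c$ with $u_b$ unhatted and $v_b$ hatted arms of each bond-edge type $b$. In the realized target each copy of $t_c$ uses exactly one arm for its unique edge to the bridging vertex and its other $2k-1$ arms inside the complete subgraph; since every internal edge matches a $b$ to a $\hat{b}$, the number of internal arms labeled $b$ equals the number labeled $\hat{b}$ for every $b$. Letting $\beta_b^{+},\beta_b^{-}$ denote the numbers of unhatted, respectively hatted, type-$b$ arms spent on the $2k$ bridging edges, this self-balance gives $2k\,(u_b-v_b)=\beta_b^{+}-\beta_b^{-}$ for every $b$, together with $\sum_b(\beta_b^{+}+\beta_b^{-})=2k$. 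Because $u_b-v_b\in\mathbb{Z}$ and $|\beta_b^{+}-\beta_b^{-}|\le\beta_b^{+}+\beta_b^{-}\le 2k$, each difference $\beta_b^{+}-\beta_b^{-}$ is forced into $\{-2k,0,2k\}$.

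This integrality collapse is the key step, and it leaves only two cases, each quickly refuted. If some difference equals $\pm 2k$, that single type absorbs all $2k$ bridging arms, so $t_c$ has net charge $\pm1$ in one type and $0$ in every other; then the total arm count $\sum_b(u_b+v_b)$ is odd, contradicting that $t_c$ has the even degree $2k$. Otherwise every difference is $0$, so $t_c$ has net charge $0$ in every bond-edge type, whence $r_{t_c}=1$ satisfies the construction matrix and $t_c$ alone realizes a complete complex of order $1$, violating Scenario~2 since $1<2k+n+1$. Either horn is a contradiction, so the degree-$2k$ vertices require at least two tile types, yielding $T_2(L_{2k+1,n})\ge n+1+2=n+3$. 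I expect the main obstacle to be recognizing and exploiting the constraint $2k\mid(\beta_b^{+}-\beta_b^{-})$, since it is precisely this divisibility that eliminates the adversary's apparent freedom to spread the bridging arms across many bond-edge types and thereby reduces the whole problem to the two trivially-refuted cases above.
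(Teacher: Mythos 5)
Your proposal is correct, and its overall decomposition matches the paper's: $n$ forced tile types in the path via Lemma~\ref{S2NoRepeatedBondEdge}, one additional type for the degree-$(2k+1)$ bridging vertex, a contradiction argument showing the $2k$ degree-$2k$ vertices cannot all share one tile type, and the same pot (\ref{eq:LollipopOddB2medpathpot}) for the upper bound. Where you genuinely diverge is in how that contradiction is executed. The paper works with the global net-count equation $z_1 + 2k z_2 = 0$ (net cohesive-ends of a bond-edge type on the bridging tile versus on the hypothetical single degree-$2k$ tile), uses $|z_1| \le 2k+1$ to force $|z_2| \le 1$, rules out all nets being zero via the order-$1$ loop complex, and then asserts rather tersely that the remaining $2k-1$ arms cannot be labeled consistently; moreover, that equation implicitly requires that the clique's bond-edge types not reappear in the path, since path tiles would otherwise contribute terms to the balance. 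Your accounting is purely local to the $2k$ copies of the hypothetical tile $t_c$: the self-balance of edges internal to the degree-$2k$ vertices gives $2k(u_b - v_b) = \beta_b^+ - \beta_b^-$, the budget $\sum_b(\beta_b^+ + \beta_b^-) = 2k$ together with divisibility by $2k$ forces each difference into $\{-2k, 0, 2k\}$, and both cases close cleanly — the all-zero case by the same order-$1$ loop complex the paper uses, the $\pm 2k$ case by a parity contradiction with the even degree $2k$. This buys you two things: the argument is insensitive to whether bond-edge types are shared between the clique and the path, and the paper's asserted final step is replaced by an actual proof. Your separate treatment of $n=1$ via the pot of Proposition~\ref{prop:LollipopS2path1} is also more explicit than the paper's, since Proposition~\ref{prop:LollipopOddB2} is only stated for $n \ge 2$.
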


\begin{proof}
By Lemma \ref{S2NoRepeatedBondEdge}, at least $n$ distinct tile types must be used in the path. Assume exactly one tile type of degree $2k$ is used to label the $K_{2k+1}$ subgraph. Note that for any bond-edge type used in the labeling, the following equation must be satisfied:
$$z_1+2kz_2=0,$$
where $z_1, z_2$ are the net number of cohesive-ends on the arms of the bond-edge type on the degree $2k+1$ tile and degree $2k$ tile, respectively. Note that $z_2 \neq 0$ for at least one bond-edge type used in the labeling, since otherwise the tile used for the vertex of degree $2k$ could form a graph of order 1 with $k$ loops. Let $a$ be a bond-edge type used in the labeling such that $z_2 \neq 0$. Note $|z_1| \leq 2k+1$, so $|z_2| \leq 1$ and thus $|z_2|=1$. Without loss of generality, let $z_2=1$. Since degree $2k$ vertices are adjacent to one another in the graph, and there are $2k-1$ arms remaining to be labeled on the degree $2k$ tile, it is impossible to label those arms in such a way that the net number of cohesive-ends of any other other bond-edge types will be 0 and the net number of cohesive-ends of bond-edge type $a$ remains at a value of 1. Therefore, at least two distinct tile types of degree $2k$ are needed in the construction of the $K_{2k+1}$ subgraph. Another tile type of degree $2k+1$ is needed for the vertex of the $K_{2k+1}$ subgraph that is adjacent to a vertex in the path. Thus, $T_2(L_{2k+1,n}) \geq n+3$. \par
The pot given in (\ref{eq:LollipopOddB2medpathpot}) achieves the lower bound for all $L_{2k+1,n}$ graphs. 
\end{proof}

\begin{proposition}\label{prop:LollipopEvenS2mediumpath} $\lceil \frac{n-2k+1}{2} \rceil + 2k -1 \leq B_2(L_{2k,n}) \leq n$ and $\lceil \frac{n-2k+1}{2} \rceil +2k+2 \leq T_2(L_{2k,n}) \leq n+2$ for $n > 2k$.
\end{proposition}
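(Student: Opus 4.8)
The plan is to derive both lower bounds from Lemma~\ref{S2RepeatedBondEdge} supplemented by a degree count on the clique, and to obtain both upper bounds by reusing the pot already built in Proposition~\ref{prop:LollipopEvenS2}.

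For the lower bounds I would apply Lemma~\ref{S2RepeatedBondEdge} with $m = 2k$, which is legitimate precisely because $n > 2k$. This immediately gives $B_2(L_{2k,n}) \geq \lceil \frac{n-2k+1}{2}\rceil + 2k - 1$ and a bound of $\lceil \frac{n-2k+1}{2}\rceil + 2k$ on the number of tile types needed to label the $n$ path vertices. To reach the stated tile bound I would then account for the clique. The bridging vertex has degree $2k$ and the remaining $2k-1$ vertices of $K_{2k}$ have degree $2k-1$; since $m > 3$ forces $k \geq 2$, both of these degrees exceed $2$. As the number of arms on a tile equals the degree of any vertex it labels, no tile used on the path (degree $1$ or $2$) can label a clique vertex, and the two distinct clique degrees $2k$ and $2k-1$ require at least two distinct new tile types. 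Adding these to the path count yields $T_2(L_{2k,n}) \geq \lceil \frac{n-2k+1}{2}\rceil + 2k + 2$.

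For the upper bounds I would take the pot $P$ of~(\ref{eq:LollipopEvenS2pot}), whose description makes no use of the relation between $n$ and $2k$. Counting its labels shows it uses the $n$ bond-edge types $a_1,\dots,a_n$ and the $n+2$ tile types $t_1,\dots,t_{n+2}$, so it certifies $B_2(L_{2k,n}) \leq n$ and $T_2(L_{2k,n}) \leq n+2$ as soon as we check it meets Scenario~2. I would first confirm that $P$ realizes $L_{2k,n}$: tiles $t_1$ and $t_2$ assemble the $K_{2k}$ subgraph (with $t_1$, of degree $2k$, serving as the bridging vertex), while $t_3,\dots,t_{n+2}$ form the appended path, the bridging edge carrying label $a_2$ and the path terminating on the reused label $a_1$. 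I would then observe that the construction matrix and its solved spectrum recorded in Proposition~\ref{prop:LollipopEvenS2} were obtained without invoking $n \leq 2k$, so in particular the forced value $r_1 = \frac{1}{2k+n}$ persists for every solution in $\mathcal{S}(P)$.

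The main obstacle is verifying that $P$ realizes no complete complex smaller than $L_{2k,n}$ now that $n > 2k$, and this is exactly where the transferred spectrum does the work: because $r_1 = \frac{1}{2k+n}$ in lowest terms appears in every element of $\mathcal{S}(P)$, the least common denominator of any solution is at least $2k+n$, so by Proposition~3 of~\cite{mintiles} no smaller graph can be realized and Scenario~2 is satisfied. It is worth noting that, in contrast to the $n \leq 2k$ situation of Proposition~\ref{prop:LollipopEvenS2}, the resulting bounds need not be tight: the $T_2$ bounds coincide at $n+2$ only for $n$ close to $2k$ (e.g.\ $n = 2k+1, 2k+2$), while the repetition of bond-edge types permitted by Lemma~\ref{S2RepeatedBondEdge} generally leaves a genuine gap for $B_2$, which is why the statement records a range rather than an exact value.
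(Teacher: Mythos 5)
Your proposal is correct and follows essentially the same route as the paper's proof: the lower bounds come from Lemma~\ref{S2RepeatedBondEdge} with $m=2k$ plus the observation that the degree-$2k$ and degree-$(2k-1)$ clique vertices force two additional tile types, and the upper bounds come from reusing the pot of~(\ref{eq:LollipopEvenS2pot}) together with the forced component $r_1 = \frac{1}{2k+n}$ in its spectrum. Your write-up is somewhat more explicit than the paper's (which simply asserts the pot and its spectrum argument carry over to $n>2k$), but the underlying argument is identical.
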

\begin{proof} By Lemma \ref{S2RepeatedBondEdge} $B_2(L_{2k,n}) \geq \lceil \frac{n-2k+1}{2} \rceil + 2k -1$ and $T_2(L_{2k,n}) \geq \lceil \frac{n-2k+1}{2} \rceil +2k$, with $\lceil \frac{n-2k+1}{2} \rceil +2k$ distinct tile types needed to label the vertices of the path subgraph. The vertices of degrees $2k$ and $2k-1$ in the $K_{2k}$ subgraph require additional distinct tile types, so $T_2(L_{2k,n}) \geq \lceil \frac{n-2k+1}{2} \rceil +2k+2$. The pot given in (\ref{eq:LollipopEvenS2pot}) also realizes $L_{2k,n}$ when $n > 2k$, and the proof of Proposition \ref{prop:LollipopEvenS2} shows that this pot does not realize any smaller graphs. Therefore, $B_2(L_{2k,n}) \leq n$ and $T_2(L_{2k,n}) \leq n+2$.
\end{proof}

\begin{remark} Suppose $P$ is a pot realizing $L_{2k,n}$ in which the path is labeled to achieve a minimum number of bond-edge and tile types as described in Lemma \ref{S2RepeatedBondEdge}. The $K_{2k}$ subgraph remains labeled as in Figure \ref{fig:L6,3Scen2}, and no bond-edge types are used to label both an edge in the $K_{2k}$ subgraph and the path subgraph. Then $P$ would consist of $\lceil \frac{n-2k+1}{2} \rceil + 2k$ bond-edge types and $\lceil \frac{n-2k+1}{2} \rceil +2k+2$ tile types. In this case, $\mathcal{S}(P)$ would have one degree of freedom. If, instead, the bond-edge type used to label the $K_{2k}$ subgraph is used to label the edge incident with the degree 1 vertex of the path, as shown in Figure \ref{fig:L6,3Scen2}, then $\mathcal{S}(P)$ would have two degrees of freedom. For either pot, analysis in Scenario 2 would prove difficult. Narrowing the bounds given in Proposition \ref{prop:LollipopEvenS2mediumpath} remains an open problem.
\end{remark}

\begin{proposition}\label{prop:LollipopOddS2mediumpath} $\lceil \frac{n-2k}{2} \rceil +2k \leq B_2(L_{2k+1,n}) \leq n$ and $\lceil \frac{n-2k}{2} \rceil+2k+4 \leq T_2(L_{2k,n}) \leq n+3$ for $n > 2k+1$.
\end{proposition}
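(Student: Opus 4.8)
The plan is to obtain the lower bounds from two independent sources—the appended path and the clique—and to obtain the upper bounds by exhibiting an explicit pot and verifying through its spectrum that it realizes nothing smaller than $L_{2k+1,n}$.

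For the lower bounds I would apply Lemma \ref{S2RepeatedBondEdge} with $m=2k+1$. Since $n>2k+1$, the lemma immediately gives $B_2(L_{2k+1,n})\geq \lceil\frac{n-2k}{2}\rceil+2k$ and shows that at least $\lceil\frac{n-2k}{2}\rceil+2k+1$ distinct tile types are required to label the path. I would then invoke the clique argument from the proof of Proposition \ref{prop:LollipopOddT2}: the relation $z_1+2kz_2=0$ forced on each bond-edge type used inside $K_{2k+1}$ rules out labeling the clique with a single degree-$2k$ tile type, so two distinct degree-$2k$ tiles are needed, together with one degree-$(2k+1)$ tile for the bridging vertex. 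That argument is purely local to $K_{2k+1}$ and is therefore insensitive to $n$; moreover these three clique tiles have degree $2k$ or $2k+1$ and so are distinct from the degree-$1$ and degree-$2$ path tiles. Adding them gives $T_2(L_{2k+1,n})\geq \lceil\frac{n-2k}{2}\rceil+2k+4$.

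For the upper bounds the first candidate is the pot (\ref{eq:LollipopOddB2medpathpot}) from Proposition \ref{prop:LollipopOddB2}, which uses $n$ bond-edge types and $n+3$ tile types and plainly realizes $L_{2k+1,n}$. The only thing to check is that it realizes no graph of order below $2k+n+1$. As in Proposition \ref{prop:LollipopOddB2}, any complex using $t_5$ is forced by the chain of bond-edge types to contain the entire path and hence has order at least $2k+n+1$, so the only threat comes from the sub-pot $P'=\{t_1,t_2,t_3,t_4\}$.

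I expect this last point to be the main obstacle. The sub-pot $\{t_1,t_2,t_3\}$ (the bridge and the two degree-$2k$ tiles) already closes into a complete complex of order $4k+4$, using two copies of the bridge and $2k+1$ copies of each degree-$2k$ tile—the ``doubled clique.'' When $n\leq 2k+3$ one has $4k+4\geq 2k+n+1$, so this complex is not smaller than the target and the argument of Proposition \ref{prop:LollipopOddB2} goes through verbatim. For $n\geq 2k+4$, however, $4k+4<2k+n+1$, so (\ref{eq:LollipopOddB2medpathpot}) violates Scenario 2 and a different pot is needed. The hard part is therefore to construct a pot that still uses only $n$ bond-edge types and $n+3$ tile types yet forbids doubling. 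The natural remedy imitates the even-clique pot (\ref{eq:LollipopEvenS2pot}) used in Proposition \ref{prop:LollipopEvenS2mediumpath}: give the bridge a path-arm whose complementary bond-edge type occurs only on the first path tile, so the clique tiles cannot close among themselves, and let the final path edge reconnect to a clique bond-edge type, which should pin the bridge proportion at $r_1=\frac{1}{2k+n+1}$ and thereby force the minimal realizable order up to $2k+n+1$. Implementing this for $K_{2k+1}$ is delicate because the clique needs two distinct degree-$2k$ tiles and two bond-edge types, both of which must be shared with the path to respect the bond-edge budget; one unit of that budget can be freed using an allowed path repetition (valid since $n>2k+1$, by Lemma \ref{S2RepeatedBondEdge}) to accommodate the dedicated bridge arm. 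Verifying on the amended construction matrix that the bridge component of the spectrum is genuinely pinned at $\frac{1}{2k+n+1}$ is the crux of the argument.
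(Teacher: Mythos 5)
Your lower-bound half coincides with the paper's proof: Lemma \ref{S2RepeatedBondEdge} with $m=2k+1$ gives $B_2(L_{2k+1,n})\geq\lceil\frac{n-2k}{2}\rceil+2k$ and $\lceil\frac{n-2k}{2}\rceil+2k+1$ path tiles, and the three clique tiles come from the $z_1+2kz_2=0$ argument of Proposition \ref{prop:LollipopOddT2}; that part is fine. On the upper bound you have in fact caught a genuine error in the paper. The paper's entire justification is that the pot (\ref{eq:LollipopOddB2medpathpot}) realizes $L_{2k+1,n}$ and that ``the proof of Proposition \ref{prop:LollipopOddB2} shows that this pot does not realize any smaller graphs.'' But the final step of that proof bounds the smallest complex realizable from the sub-pot $\{t_1,t_2,t_3,t_4\}$ below by $2(2k+1)+2=4k+4$ and concludes only because $n\leq 2k+1$ makes $4k+4$ exceed the target order $2k+n+1$. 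Exactly as you observe, the doubled clique --- two copies of $t_1$ and $2k+1$ copies each of $t_2$ and $t_3$, whose cohesive-end counts balance --- is a complete complex of order $4k+4$, and $4k+4<2k+n+1$ precisely when $n\geq 2k+4$. So for $n\geq 2k+4$ the paper's pot violates Scenario 2 and its proof of $B_2\leq n$, $T_2\leq n+3$ collapses; it is valid only for $n\in\{2k+2,2k+3\}$ (at $n=2k+3$ the doubled clique has the same order as the target, which Scenario 2 permits), which is exactly your case split.

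The gap is that your proposal stops at a sketch precisely where a construction is now required: for $n\geq 2k+4$ you describe the intended pot qualitatively but never exhibit it or verify it, so in that regime the upper bounds remain unproven both in your write-up and in the paper. The repair does work, and no within-path repetition is needed: take $t_1=\{a_1^{2k},a_3\}$, keep the clique tiles $t_2=\{\hat{a}_1,a_2^{k-1},\hat{a}_2^{k}\}$, $t_3=\{\hat{a}_1,a_2^{k},\hat{a}_2^{k-1}\}$, and use path tiles $\{\hat{a}_3,a_4\},\{\hat{a}_4,a_5\},\ldots,\{\hat{a}_{n-1},a_n\},\{\hat{a}_n,a_1\},\{\hat{a}_1,a_2\},\{\hat{a}_2\}$; this is $n$ bond-edge types and $n+3$ tile types, and it realizes $L_{2k+1,n}$ since the clique labeling is unchanged. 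Each of $a_3,\ldots,a_n$ appears unhatted on exactly one tile type and hatted on exactly one, so any complete complex uses equal numbers $x$ of $t_1$ and of every chain tile through $\{\hat{a}_n,a_1\}$, and $x=0$ forces the complex to be empty. Balancing $a_1$ arms gives $\#t_2+\#t_3+\#\{\hat{a}_1,a_2\}=(2k+1)x$, balancing $a_2$ arms gives $\#\{\hat{a}_2\}=(2k+1)x-2\#t_2$, and the total order is $(2k+n)x+\#\{\hat{a}_2\}$. For $x=1$ parity forces $\#\{\hat{a}_2\}\geq 1$, and for $x\geq 2$ the order already exceeds $2k+n+1$; hence no complex smaller than the target exists. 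Carrying out something of this sort (or completing your pinned-spectrum variant) is what the proposition actually requires; citing Proposition \ref{prop:LollipopOddB2}, as the paper does, is not enough.
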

\begin{proof} By Lemma \ref{S2RepeatedBondEdge} $B_2(L_{2k+1,n}) \geq \lceil \frac{n-2k}{2} \rceil + 2k$ and $T_2(L_{2k+1,n}) \geq \lceil \frac{n-2k}{2} \rceil +2k+1$, with $\lceil \frac{n-2k}{2} \rceil +2k+1$ distinct tile types needed to label the vertices of the path subgraph. The proof of Proposition \ref{prop:LollipopOddT2} shows that at least three distinct tile type are necessary to label the vertices of the $K_{2k+1}$ subgraph, so $T_2(L_{2k+1,n}) \geq \lceil \frac{n-2k}{2} \rceil +2k+4$. The pot given in (\ref{eq:LollipopOddB2medpathpot}) also realizes $L_{2k+1,n}$ when $n > 2k+1$, and the proof of Proposition \ref{prop:LollipopOddB2} shows that this pot does not realize any smaller graphs. Therefore, $B_2(L_{2k+1,n}) \leq n$ and $T_2(L_{2k+1,n}) \leq n+3$.
\end{proof}

\begin{remark} Suppose $P$ is a pot realizing $L_{2k+1,n}$ in which the path is labeled to achieve a minimum number of bond-edge and tile types as described in Lemma \ref{S2RepeatedBondEdge}, the $K_{2k+1}$ subgraph remains labeled as in Figure \ref{fig:L5,5Scen2}, and no bond-edge types are used to label both an edge in the $K_{2k+1}$ subgraph and the path subgraph. Then, $P$ would consist of $\lceil \frac{n-2k}{2} \rceil + 2k+2$ bond-edge types and $\lceil \frac{n-2k}{2} \rceil +2k+4$ tile types. In this case, $\mathcal{S}(P)$ would have one degree of freedom. It remains an open question as to whether the bond-edge types used to label the $K_{2k+1}$ subgraph can be used to label edges of the path subgraph while bond-edge types are also repeated along the path, which could allow for a pot achieving the lower bound for $B_2(L_{2k+1,n})$ given in Proposition \ref{prop:LollipopOddS2mediumpath}. 
\end{remark}

\begin{proposition}
$m+n-2 \leq B_{3}(L_{m, n}) \leq m + n - 1$.
\end{proposition}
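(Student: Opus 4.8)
The plan is to prove the two bounds separately, decomposing the bond-edge types of any realizing pot into the set $S_P$ appearing on the $n$ edges of the appended path (the bridging edge together with the $n-1$ internal path edges) and the set $S_K$ appearing on the $\binom{m}{2}$ edges of the $K_m$ subgraph.

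For the upper bound $B_3(L_{m,n}) \le m+n-1$, I would build an explicit pot by gluing a Scenario 3 realization of $K_m$ onto a rigidly labeled path. Starting from a pot achieving $B_3(K_m)=m-1$ and $T_3(K_m)=m$ (Table \ref{table:Complete Graph}), which assigns a distinct tile to each vertex of $K_m$ using $m-1$ bond-edge types, I would augment the bridging-vertex tile with one additional arm and then label the bridging edge and the $n-1$ internal path edges with $n$ brand-new bond-edge types, one fresh type per edge, so that each new type occurs exactly once hatted and once unhatted across the path tiles. This produces a pot with $(m-1)+n=m+n-1$ bond-edge types that realizes $L_{m,n}$. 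To confirm Scenario 3, I would argue that since the $n$ path types are pairwise distinct and disjoint from the $K_m$ types, the clique arms and path arms cannot interact: any complete complex decomposes along its bridging vertices into sub-complexes using only $K_m$ types and sub-complexes using only path types. The path tiles can reassemble only into the given rigid path, while the clique part is governed by the Scenario 3 property of the chosen $K_m$ pot, so $L_{m,n}$ is the unique complete complex and no smaller or non-isomorphic complex arises.

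For the lower bound $B_3(L_{m,n}) \ge m+n-2$, Lemma \ref{S3NoRepeatedBondEdge} forbids any repetition along the path, so $|S_P| = n$. An argument paralleling the proof that $B_3(K_m)\ge m-1$ in \cite{mintiles} shows that the clique edges must carry at least $m-1$ distinct bond-edge types even when $K_m$ sits inside $L_{m,n}$, so $|S_K| \ge m-1$; otherwise the clique could be folded into a smaller complex which, carrying the uniquely determined path, would realize a graph of order less than $m+n$ or one of order $m+n$ not isomorphic to $L_{m,n}$. The final ingredient is the overlap bound $|S_P \cap S_K| \le 1$, after which inclusion–exclusion gives $B_3(L_{m,n}) \ge |S_P \cup S_K| = |S_P| + |S_K| - |S_P \cap S_K| \ge n + (m-1) - 1 = m+n-2$.

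The overlap bound is the crux and the step I expect to be hardest. The intuition is that if two distinct types $a,b$ each appeared on both a clique edge and a path edge, one could detach the path at its $a$-edge (or $b$-edge) and reconnect the severed piece to the matching clique arm, rerouting the path into or around $K_m$ and producing a smaller or non-isomorphic complete complex, in violation of Scenario 3; only the bridging edge, incident to the clique, can safely reuse a clique type. Making this rerouting precise for an arbitrary pot — rather than for a specific labeling — is exactly the clique-versus-path interaction flagged as an open difficulty in the Scenario 2 remarks, and this is why the two bounds differ by one: we can guarantee the overlap is at most one, while the explicit construction attains overlap zero, leaving the exact value of $B_3(L_{m,n})$ between $m+n-2$ and $m+n-1$.
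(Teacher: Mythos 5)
Your overall architecture is the same as the paper's: the path contributes $n$ distinct types (Lemma \ref{S3NoRepeatedBondEdge}), the clique contributes at least $m-1$ (the $B_3(K_m)\geq m-1$ argument of \cite{mintiles} transfers because the clique vertices remain mutually adjacent, so too few types permits a multiple-edge), the overlap is at most one, and your upper-bound construction is essentially the paper's pot (\ref{eq:LollipopS3B3pot}): the Scenario 3 pot for $K_m$ with one tile given an extra arm, plus fresh types along a rigid path. The genuine gap is that you state the overlap bound $|S_P \cap S_K| \leq 1$ as the crux and then decline to prove it, characterizing it as the open difficulty from the Scenario 2 remarks. It is not open in Scenario 3; it is exactly the step the paper carries out, in two cases. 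First, if a clique type also labels a path edge \emph{not} incident with the bridging vertex, then regardless of the two orientations, the two same-type bond-edges can break and re-bond with swapped partners, producing a complete complex of order $m+n$ that no longer consists of a path of $n$ vertices attached to $K_m$ via a single cut-vertex, hence non-isomorphic to $L_{m,n}$; this forbids sharing on all path edges except the one incident with the bridging vertex. Second, if the type of that one remaining path edge also labels a clique edge not incident with the bridging vertex, a swap produces a multiple-edge, which $L_{m,n}$ does not have. The only unexcluded reuse is that single path edge's type reappearing on clique edges incident with the bridging vertex, so at most one type is shared, giving $n+(m-1)-1=m+n-2$. (The open question in the paper's remarks concerns Scenario 2, where types may additionally be repeated \emph{within} a long path; in Scenario 3 Lemma \ref{S3NoRepeatedBondEdge} removes that complication and the case analysis closes.)

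A secondary weakness is your Scenario 3 verification of the upper-bound pot. Appealing to "the Scenario 3 property of the chosen $K_m$ pot" is not sufficient: that property guarantees the clique tiles \emph{can} realize $K_m$, so if they could do so without using the augmented bridging tile, your combined pot would realize $K_m$ itself, a complete complex of order $m<m+n$, violating Scenario 3 for $L_{m,n}$. You need the further fact that every realization of $K_m$ from the chosen pot uses the tile being augmented (true for the pot of \cite{mintiles}, whose spectrum forces all $m$ tiles to appear in equal proportion), so that the extra $a_m$ arm always drags the path along. The paper avoids this subtlety by computing the construction matrix of the combined pot directly and observing that its spectrum is the single point $r_i = 1/(m+n)$ for all $i$, which simultaneously rules out smaller complexes and fixes the tile multiplicities used in the isomorphism argument.
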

\begin{proof}
 The proof that $B_3(K_m)\geq m-1$ given in \cite{mintiles} applies to show that at least $m-1$ bond-edge types are needed in the labeling of the $K_m$ subgraph of $L_{m,n}$. As in $K_m$, all vertices in the $K_m$ subgraph of $L_{m,n}$ are adjacent to one another, so using fewer than $m-1$ bond-edge types to label the edges in the $K_m$ subgraph will allow for formation of a multiple-edge between two of those vertices. $L_{m,n}$ has no multiple-edges, so the resulting graph would be non-isomorphic to $L_{m,n}$. Note that this can occur in the labeling presented in Figure \ref{fig:L5,5Scen2} between vertices labeled with tile types $t_2$ and $t_3$. By Lemma \ref{S3NoRepeatedBondEdge} at least $n$ distinct bond-edge types must be used in the path. \par
Suppose a bond-edge type is used in the labeling of the $K_m$ subgraph is also used to label an edge in the path not incident with the bridging vertex in the $K_m$ subgraph. It can be easily verified that, regardless of labeling orientation, these two bond-edges can break and re-bond in such a way that there is no longer a path of length $n$ adjoined to the graph via a single cut-vertex, creating a non-isomorphic graph. Furthermore, if a bond-edge type is used to label both the edge in the path subgraph incident with the bridging vertex in the $K_m$ subgraph and an edge in the $K_m$ subgraph not incident with that edge, then a graph with a multiple-edge can form. The only remaining possibility for repetition of a bond-edge type between the $K_m$ subgraph and the path subgraph is if the bond-edge type used to label the edge of the path incident with the $K_m$ vertex is used to label another edge incident with that vertex. Since at most a single bond-edge type can be used in the labeling of both the $K_m$ subgraph and the path subgraph, $B_3(L_{m,n}) \geq m+n-2$.  \par 
The upper bound of $m+n-1$ is achieved by following pot. An example labeling of $L_{5,3}$ using this pot is given in Figure \ref{fig:L5,3Scen3}.

\begin{equation}\label{eq:LollipopS3B3pot} \begin{split}  P = \{t_1 = \{a_1^{m-1}, a_m\}, t_i = \{\hat{a}_{1}, ..., \hat{a}_{i-1}, a_i^{m-i}\} \text{ for } 2 \leq i \leq m-1, t_m = \{\hat{a}_1,..., \hat{a}_{m-1}\}\\ t_{m+1} = \{\hat{a}_m, a_{m+1}\}, t_l = \{\hat{a}_{l-1}, a_{l}\}, \text{ for } m + 2 \leq l \leq m + n - 1, t_{m+n} = \{\hat{a}_{m+n-1}\}\} \end{split} \end{equation}

The construction matrix and spectrum of $P$ follow. 

\begin{equation*} M(P) = \begin{bmatrix} 
m-1 & -1 & \cdots & &  & \cdots & -1 & 0 & \cdots & & 0\\ 
0 & m-2 & -1 & \cdots & & -1 & 0 & 0 & \cdots & & 0 \\ 
\vdots & 0 & m-3 & -1 & & \vdots & \vdots & \vdots & & & \vdots\\
 & \vdots & \ddots & \ddots & \ddots & & & & & & \\
 & & & 0 & 1 & -1  & 0 & 0 & \cdots & & 0\\
 & & & & 0 & 0 & 1 & -1 & 0 & \cdots & 0\\
\vdots & \vdots & & & & \ddots & \ddots & \ddots & \ddots & \ddots & \vdots \\
0 & 0 & \cdots & & & & 0 & 0 & 1 & -1 & 0 \\
1 & 1 & \cdots & & & & & & \cdots & 1 & 1 \end{bmatrix} \end{equation*}

 \begin{equation*} \mathcal{S}(P)=\left\{\left\langle r_i = \frac{1}{m+n} \text{ for } 1 \leq i \leq m+n \right\rangle\right\} \end{equation*} 
 
The least common multiple of the single vector component denominator in $\mathcal{S}(P)$ is $m+n$, so no graphs of order smaller than $L_{m,n}$ are realized by the pot. To see that no graph of order $m+n$ not isomorphic to $L_{m,n}$ may be realized by this pot, note that the unique solution to the construction matrix implies that any such graph must use exactly the same numbers of each tile type as $L_{m,n}$ and the only possible re-combination of bond-edge formations represent simple ``swaps" of edges incident with the same vertex. 
\end{proof}

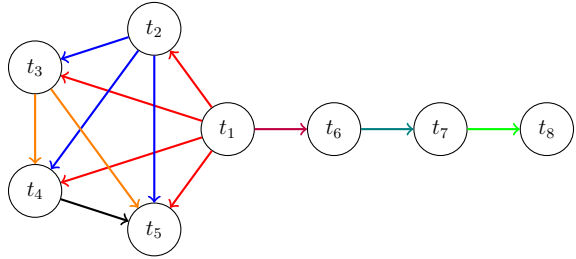
\begin{figure}[h!]
     \centering
 \begin{tikzpicture}[transform shape, scale = 0.7]

 \node[main node] (a) at ( 9,0) {$t_1$};
 \node[main node] (b) at (7.62
, -1.90) {$t_5$};
 \node[main node](c) at (5.38
,-1.17) {$t_4$};
 \node[main node] (d) at (5.38
,1.17) {$t_3$};
 \node[main node] (e) at (7.62, 1.90
) {$t_2$};
\node[main node] (f) at ( 11,0) {$t_6$};
\node[main node] (g) at ( 13,0) {$t_7$};
\node[main node] (h) at ( 15,0) {$t_8$};

\path[draw,thick,color=red,->]
(a) edge node []{} (b)
(a) edge node []{} (c)
(a) edge node []{} (d)
(a) edge node []{} (e);

\path[draw,thick,color=blue,->]
(e) edge node []{} (b)
(e) edge node []{} (c)
(e) edge node []{} (d);

\path[draw,thick,color=orange,->]
(d) edge node []{} (b)
(d) edge node []{} (c);

\path[draw,thick,color=black,->]
(c) edge node []{} (b);

\path[draw,thick,color=purple,->]
(a) edge node []{} (f);

\path[draw,thick,color=teal,->]
(f) edge node []{} (g);

\path[draw,thick,color=green,->]
(g) edge node []{} (h);
\end{tikzpicture}

\caption{Scenario 3 labeling of $L_{5,3}$ } 
\label{fig:L5,3Scen3}
\end{figure}

Note that the pot given in (\ref{eq:LollipopS3B3pot}) will satisfy the requirements of Scenario 3 for any order $L_{m,n}$ graph, but for some $L_{m,n}$ graphs a pot with only $m+n-2$ bond-edge types can also satisfy the requirements of Scenario 3. In this case, the edges labeled with the same bond-edge type must have matching labeling orientations with respect to the connecting vertex, otherwise a loop edge can form. No immediate problem arises if the same bond-edge type with matching labeling orientation is used to label both the edge of the path incident with the $K_m$ subgraph vertex and another edge incident with that vertex. However, the possibility for realization of a smaller or non-isomorphic graph exists. Since each vertex of the $K_m$ subgraph is labeled exclusively with cohesive-ends for which matching cohesive-ends exists on other vertices of the $K_m$ subgraph, a complete complex can always form from the $m$ tile types used to label the vertices of the $K_m$ subgraph alone. The size of such a complex always has the potential to be smaller than the target graph if the value of $n$ is large enough, but the exact magnitude of $n$ required to prevent this violation of Scenario 2, and therefore of Scenario 3, varies depending on how the $K_m$ subgraph is labeled. This makes finding a pot for each order of $L_{m,n}$ challenging. Here we provide a few examples to illustrate this difficulty.

\begin{example} \label{ex:1LollipopOddS3}
Consider the following pot realizing $L_{m,n}$ in which the same bond-edge type is used to label all edges incident with the bridging vertex. An example labeling of $L_{5,6}$ using this pot is shown in Figure \ref{fig:L5,6Scen3ex1}. 

\begin{equation*} \begin{split}  P = \{t_1 = \{a_1^{m}\}, t_i = \{\hat{a}_{1}, ..., \hat{a}_{i-1}, a_i^{m-i}\} \text{ for } 2 \leq i \leq  m-1, t_m = \{\hat{a}_1,..., \hat{a}_{m-1}\}\\ t_{m+1} = \{\hat{a}_1, a_{m}\}, t_l = \{\hat{a}_{l-2}, a_{l-1}\}, \text{ for } m + 2 \leq l \leq m + n - 1, t_{m+n} = \{\hat{a}_{m+n-2}\}\} \end{split} \end{equation*}

The spectrum of $P$ follows.

 \begin{equation*} \begin{split} \mathcal{S}(P)=\left\{\left\langle r_1 = \frac{1}{m+1} - \frac{n-1}{m+1}(r_{m+n}), r_i = \frac{m}{m^2-1} - \frac{mn+1}{m^2-1}(r_{m+n}) \text{ for } 2 \leq i \leq m, \right. \right. \\  \left. \left. r_{j} = r_{m+n} \text{ for } m+   1 \leq j \leq m+n-1 \right\rangle \mid r_{m+n} \in \mathbb{Q}^+\right\} \end{split} \end{equation*} 
 
 It is easy to see from $\mathcal{S}(P)$ that if the free variable $r_{m+n}$ is set to $0$, the The least common multiple of the  vector component denominators of the resulting solution is $m^2-1$. Since the target graph order is $m+n$, if $n>m^2-m-1$ then a graph of order smaller than the target can be realized by the pot.
 
  \begin{figure}[h!]
     \centering
 \begin{tikzpicture}[transform shape, scale = 0.7]

 \node[main node] (a) at ( 9,0) {$t_1$};
 \node[main node] (b) at (7.62
, -1.90) {$t_5$};
 \node[main node](c) at (5.38
,-1.17) {$t_4$};
 \node[main node] (d) at (5.38
,1.17) {$t_3$};
 \node[main node] (e) at (7.62, 1.90
) {$t_2$};
\node[main node] (f) at ( 11,0) {$t_6$};
\node[main node] (g) at ( 13,0) {$t_7$};
\node[main node] (h) at ( 15,0) {$t_8$};
\node[main node] (i) at ( 17,0) {$t_9$};
\node[main node] (j) at ( 19,0) {$t_{10}$};
\node[main node] (k) at ( 21,0) {$t_{11}$};

\path[draw,thick,color=red,->]
(a) edge node []{} (b)
(a) edge node []{} (c)
(a) edge node []{} (d)
(a) edge node []{} (e)
(a) edge node []{} (f);

\path[draw,thick,color=blue,->]
(e) edge node []{} (b)
(e) edge node []{} (c)
(e) edge node []{} (d);

\path[draw,thick,color=orange,->]
(d) edge node []{} (b)
(d) edge node []{} (c);

\path[draw,thick,color=black,->]
(c) edge node []{} (b);

\path[draw,thick,color=purple,->]
(f) edge node []{} (g);

\path[draw,thick,color=teal,->]
(g) edge node []{} (h);

\path[draw,thick,color=green,->]
(h) edge node []{} (i);

\path[draw,thick,color=pink,->]
(i) edge node []{} (j);

\path[draw,thick,color=brown,->]
(j) edge node []{} (k);
\end{tikzpicture}

\caption{Scenario 3 labeling of $L_{5,6}$ for Example \ref{ex:1LollipopOddS3} } 
\label{fig:L5,6Scen3ex1}
\end{figure}
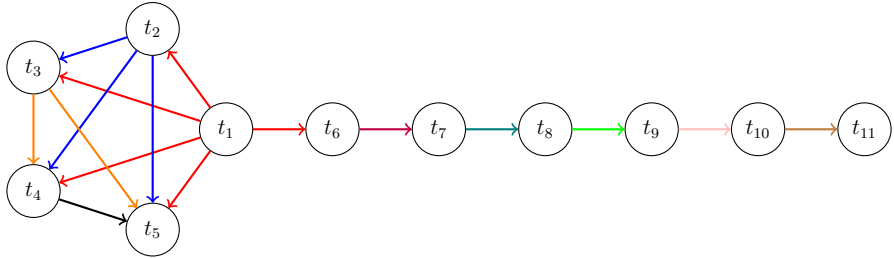
 \end{example}
 
 \begin{example} \label{ex:2LollipopOddS3}
Consider the following pot realizing $L_{m,n}$ in which a bond-edge type is used to label two edges incident with the bridging vertex, one in the path subgraph and one in the $K_m$ subgraph. An example labeling of the graph $L_{5,6}$ using this pot is shown in Figure \ref{fig:L5,6Scen3ex2}. 

\begin{equation*} \begin{split}  P = \{t_1 = \{a_1^{2}, \hat{a_2},...\hat{a_{m-1}}\}, t_i = \{\hat{a}_{2}, ..., \hat{a}_{i-1}, a_i^{m-(i-1)}\} \text{ for } 2 \leq i \leq m-1, t_m = \{\hat{a}_1,..., \hat{a}_{m-1}\}\\ t_{m+1} = \{\hat{a}_1, a_{m}\}, t_l = \{\hat{a}_{l-2}, a_{l-1}\}, \text{ for } m + 2 \leq l \leq m + n - 1, t_{m+n} = \{\hat{a}_{m+n-2}\}\} \end{split} \end{equation*}

The spectrum of $P$ follows.

 \begin{equation*} \begin{split} \mathcal{S}(P)=\left\{\left\langle r_1 = \frac{6}{3m+4} - \frac{6n-2}{3m+4}(r_{m+n}), r_i = \frac{3}{3m+4} - \frac{3n-1}{3m+4}(r_{m+n}) \text{ for } 2 \leq i \leq m-1, \right. \right. \\  \left. \left. r_m = \frac{4}{3m+4}-\frac{m+4n}{3m+4}(r_{m+n}), r_{j} = r_{m+n} \text{ for } m+1 \leq j \leq m+n-1 \right\rangle \mid r_{m+n} \in \mathbb{Q}^+\right\} \end{split} \end{equation*} 
 
  It is easy to see from $\mathcal{S}(P)$ that if the free variable $r_{m+n}$ is set to $0$, the least common multiple of the vector component denominators of the resulting solution is $3m+4$. Since the target graph order is $m+n$, if $n>2m+4$ then a graph of smaller order than the target can be realized by the pot.
  
  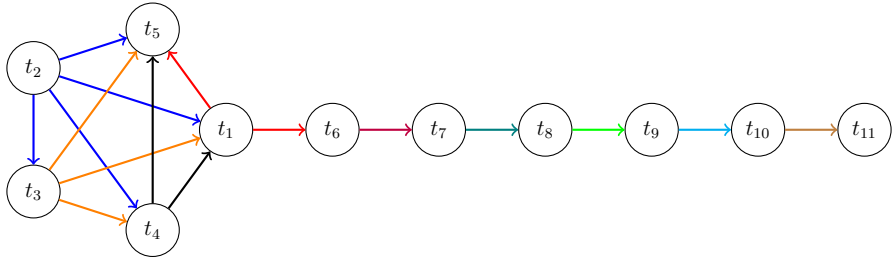
\begin{figure}[h!]
     \centering
 \begin{tikzpicture}[transform shape, scale = 0.7]

 \node[main node] (a) at ( 9,0) {$t_1$};
 \node[main node] (b) at (7.62
, -1.90) {$t_4$};
 \node[main node](c) at (5.38
,-1.17) {$t_3$};
 \node[main node] (d) at (5.38
,1.17) {$t_2$};
 \node[main node] (e) at (7.62, 1.90
) {$t_5$};
\node[main node] (f) at ( 11,0) {$t_6$};
\node[main node] (g) at ( 13,0) {$t_7$};
\node[main node] (h) at ( 15,0) {$t_8$};
\node[main node] (i) at ( 17,0) {$t_9$};
\node[main node] (j) at ( 19,0) {$t_{10}$};
\node[main node] (k) at ( 21,0) {$t_{11}$};

\path[draw,thick,color=red,->]
(a) edge node []{} (e)
(a) edge node []{} (f);

\path[draw,thick,color=blue,->]
(d) edge node []{} (e)
(d) edge node []{} (a)
(d) edge node []{} (b)
(d) edge node []{} (c);

\path[draw,thick,color=orange,->]
(c) edge node []{} (e)
(c) edge node []{} (b)
(c) edge node []{} (a);

\path[draw,thick,color=black,->]
(b) edge node []{} (a)
(b) edge node []{} (e);

\path[draw,thick,color=purple,->]
(f) edge node []{} (g);

\path[draw,thick,color=teal,->]
(g) edge node []{} (h);

\path[draw,thick,color=green,->]
(h) edge node []{} (i);

\path[draw,thick,color=cyan,->]
(i) edge node []{} (j);

\path[draw,thick,color=brown,->]
(j) edge node []{} (k);
\end{tikzpicture}

\caption{Scenario 3 labeling of $L_{5,6}$ for Example \ref{ex:2LollipopOddS3} } 
\label{fig:L5,6Scen3ex2}
\end{figure}

\end{example}

As in Scenario 2, the minimum number of tile types is significantly more straightforward for Scenario 3. The minimum number of tile types required is the worst case scenario, in which each vertex of the graph corresponds to a distinct tile type.

\begin{proposition} $T_{3}(L_{m, n}) = m + n$. \end{proposition}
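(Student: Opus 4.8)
The plan is to prove the two inequalities $T_3(L_{m,n}) \le m+n$ and $T_3(L_{m,n}) \ge m+n$ separately, the first being immediate and the second requiring that every vertex of $L_{m,n}$ receive a distinct tile type. For the upper bound, I would observe that the pot in (\ref{eq:LollipopS3B3pot}) was already shown, in the preceding proposition, to realize $L_{m,n}$ under the conditions of Scenario 3, and that it employs the $m+n$ distinct tile types $t_1,\dots,t_{m+n}$. Hence $T_3(L_{m,n}) \le m+n$, and it remains only to establish the matching lower bound.

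For the lower bound I would partition the $m+n$ vertices into the $n$ path vertices and the $m$ vertices of the $K_m$ subgraph and argue that no tile type can be repeated within or across these two groups. First, since $m>3$ the graph $L_{m,n}$ is not a path, so Lemma \ref{S3NoRepeatedBondEdge} applies and forces the $n$ path vertices to carry $n$ pairwise distinct tile types. Second, a tile type is determined in part by its number of arms, that is, by the degree of the vertex it labels; the path vertices have degree $1$ or $2$, whereas every $K_m$ vertex has degree $m-1 \ge 3$ or $m$. Thus no path vertex can share a tile type with any $K_m$ vertex, which separates the two groups.

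The main work, and the step I expect to be the genuine obstacle, is showing that the $m$ vertices of the $K_m$ subgraph carry pairwise distinct tile types. The bridging vertex has degree $m$ while the other $m-1$ complete-graph vertices have degree $m-1$, so the bridging vertex is automatically distinct by the degree argument; the real content is that the $m-1$ degree-$(m-1)$ vertices, being pairwise adjacent, cannot repeat a tile. Here I would reuse the reasoning behind $T_3(K_m)=m$ recorded in Table \ref{table:Complete Graph} together with the multiple-edge idea from the preceding $B_3$ proposition. If two adjacent vertices $u$ and $v$ share a tile type $t$, then, writing $c$ and $\hat{c}$ for the complementary cohesive-ends forming the edge $uv$, the tile $t$ must contain both $c$ and $\hat{c}$, since it supplies the $c$-arm at $u$ and the $\hat{c}$-arm at $v$. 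Consequently $u$ has a spare $\hat{c}$-arm and $v$ a spare $c$-arm; re-bonding these two arms to each other and re-pairing the two cohesive-ends they vacate yields a complete complex of the same order $m+n$ containing a multiple-edge between $u$ and $v$. This complex is non-isomorphic to the simple graph $L_{m,n}$, violating Scenario 3.

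Combining the three observations, all $m+n$ vertices require distinct tile types, so $T_3(L_{m,n}) \ge m+n$, and with the upper bound we conclude $T_3(L_{m,n}) = m+n$.
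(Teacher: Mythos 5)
Your proposal is correct and follows essentially the same route as the paper: Lemma \ref{S3NoRepeatedBondEdge} supplies the $n$ distinct tile types on the path, degree differences separate path tiles from $K_m$ tiles, pairwise adjacency forces $m$ distinct tile types in the clique, and the pot in (\ref{eq:LollipopS3B3pot}) achieves the bound. The only cosmetic difference is in the adjacency step, where you re-bond the spare complementary arms to create a multiple-edge, while the paper invokes the possible formation of a loop edge (citing \cite{mintiles}); both arguments exploit the same fact that a tile shared by adjacent vertices must carry both a cohesive-end and its complement.
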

\begin{proof}
By Lemma \ref{S3NoRepeatedBondEdge} $n$ distinct tile types are needed to label the vertices in the appended path. The vertices in the $K_m$ subgraph are of different degrees than the vertices in the path, and thus require tile types distinct from those used in the path. Since the $m$ vertices in the $K_m$ subgraph are all adjacent to one another, each requires a distinct tile type in order to avoid possible formation of loop edges \cite{mintiles}. This implies $T_3(L_{m,n}) \geq m+n$. The lower bound is achieved by the pot given in (\ref{eq:LollipopS3B3pot}). 
\end{proof}

\section{Tadpole Graphs}\label{Tadpolegraphsection}

\begin{definition} A \textit{tadpole graph} is a $C_m$ cycle graph connected to a $P_n$ path, through a single bridging vertex of degree $3$. \cite{TadpoleGraph} \end{definition}
 To avoid confusion with number of minimum tile types, we denote tadpole graphs as $Tad_{m,n},$ where $m$ is the number of vertices in the cycle and $n$ is the number of vertices in the extending path. Thus, the order of $Tad_{m,n}$ is $m+n$. We will often refer to the degree $3$ vertex as the \textit{bridging vertex}. Note that the bridging vertex is a cut-vertex and the path is a vertex-induced subgraph. \par 
 $B_i(C_m)$ and $T_i(C_m)$ for $i=1,2,3$ are known and these values can be found in Table \ref{table:Cycle Graph}. The cycle $C_m$ is vertex-induced subgraph of $Tad_{m,n}$ and is a maximum clique of the graph. We will occasionally refer to the values of $B_i(C_m)$ and $T_i(C_m)$ in our proofs for $Tad_{m,n}$, as some of the same labeling strategies and lower bounds still apply.
 
\begin{table}[h]
\centering
\begin{tabular}{| l | c | c| }
\hline
\renewcommand*{\arraystretch}{1.2}
 & $B_i(C_m)$ & $T_i(C_m)$ \\ \hline
Scenario 1 & $B_1(C_m) = 1$  & $T_1(C_m) = 1$ \\ \hline
Scenario 2 & $B_2(C_m) = \lceil \frac{m}{2} \rceil$  & $T_2(C_m) =  \lceil \frac{m}{2} \rceil +1$ \\ \hline
Scenario 3  & $B_3(C_m) = \lceil \frac{m}{2} \rceil$  & $T_3(C_m) =\lceil \frac{m}{2} \rceil +1$\\ \hline 
\end{tabular}
\caption{Minimum tile and bond-edge type values for the cycle graph family \cite{mintiles}} 
\label{table:Cycle Graph}
\end{table}

\subsection{Scenario 1}

Note that any tadpole graph consists of $m-1$ degree 2 vertices and a single degree 3 bridging vertex, as well as $n-1$ degree 2 vertices and a single degree 1 vertex within the path. \\

Recall that for all graphs $G$, $B_1(G)=1$ \cite{mintiles}.

\begin{proposition}
$T_1(Tad_{m,n}) = 3$.
\end{proposition}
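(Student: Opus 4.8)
The plan is to apply Theorem 1 of \cite{mintiles}, which gives the bound $av(G) \leq T_1(G) \leq ev(G) + 2ov(G)$, and then to exhibit an explicit pot meeting the lower bound. First I would compute the relevant degree-parity invariants. As noted immediately before the statement, $Tad_{m,n}$ consists of degree 2 vertices (the $m-1$ non-bridging cycle vertices together with the $n-1$ interior path vertices), a single degree 3 bridging vertex, and a single degree 1 vertex at the end of the path. Hence the distinct degrees present are $\{1,2,3\}$, giving $ev(Tad_{m,n}) = 1$ (only degree 2 is even), $ov(Tad_{m,n}) = 2$ (degrees 1 and 3 are odd), and $av(Tad_{m,n}) = 3$. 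Theorem 1 then yields $3 \leq T_1(Tad_{m,n}) \leq 1 + 2(2) = 5$, so the lower bound is already $3$ and only the construction remains. This case should be noted to hold uniformly for all $m$ and $n$, since the degree multiset is independent of the specific values once $m \geq 3$ and $n \geq 1$; a brief check of the degenerate small cases (e.g. $n=1$, where the degree 1 vertex is the unique path vertex) confirms the invariants are unchanged.

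The substance of the proof is therefore producing a single-bond-edge-type pot of exactly three tiles realizing $Tad_{m,n}$, mirroring the Scenario 1 constructions already used for lollipop graphs. Since $B_1(G)=1$ for all $G$, only one bond-edge type $a$ is available. I would assign one tile type to the degree 2 vertices, one to the degree 3 bridging vertex, and one to the degree 1 endpoint. The natural choices are a degree-1 tile $t_3 = \{\hat{a}\}$ for the path endpoint, a degree-2 tile $t_2 = \{a,\hat{a}\}$ for all the degree 2 vertices (which matches the cycle-graph strategy of Table \ref{table:Cycle Graph}, where $T_1(C_m)=1$), and a degree-3 bridging tile whose three arms can be oriented to close the cycle and launch the path consistently, for instance $t_1 = \{a,\hat{a},\hat{a}\}$ or the orientation-balanced analogue. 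The key requirement is that the half-edges can be consistently oriented so that every $a$ matches a $\hat{a}$ around the cycle and down the path; because the flexible tile model imposes no geometric constraint, one only needs the net cohesive-end count to balance, which the degree-2 tile $\{a,\hat{a}\}$ handles automatically along any path or cycle segment.

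The main obstacle, though minor, is verifying that the three tile types can simultaneously close the cycle \emph{and} terminate the path with a single shared bond-edge type, since the bridging vertex sits at the junction of both. Concretely, one must check that the arms of $t_1$ admit an orientation consistent with both an incoming and outgoing cycle edge and the first path edge, and that the alternating $\{a,\hat{a}\}$ pattern on $t_2$ closes up correctly around a cycle of arbitrary length $m$ (this is exactly why $T_1(C_m)=1$ works). I would resolve this by drawing the oriented labeling explicitly in a figure for a representative small case, and then observe that the construction extends verbatim to all $m \geq 3$ and $n \geq 1$: inserting additional degree-2 tiles lengthens the cycle or path without disturbing the balance, so no new tile types are ever needed. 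This exhibits a pot with $T_1 = 3$ and completes the matching upper bound.
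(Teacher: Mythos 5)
Your proposal follows essentially the same route as the paper: compute $av(Tad_{m,n})=3$, $ev(Tad_{m,n})=1$, $ov(Tad_{m,n})=2$, invoke Theorem 1 of \cite{mintiles} to get $3 \leq T_1(Tad_{m,n}) \leq 5$, and then exhibit a one-bond-edge-type pot with three tiles (one per vertex degree); the paper's pot is $\{t_1=\{\hat{a},a\},\, t_2=\{\hat{a},a^2\},\, t_3=\{\hat{a}\}\}$. One caution: of your two candidate bridging tiles, $\{a,\hat{a},\hat{a}\}$ does not work with the endpoint tile $\{\hat{a}\}$, since the forced tile multiset for $Tad_{m,n}$ (one 3-armed tile, one 1-armed tile, $m+n-2$ copies of $\{a,\hat{a}\}$) would then have net cohesive-end count $-2\neq 0$ --- equivalently, the path edges would all have to point toward the bridging vertex, forcing the degree 1 tile to be $\{a\}$ --- so only your ``orientation-balanced analogue'' $\{\hat{a},a^2\}$, which is exactly the paper's choice, completes the upper bound.
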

\begin{proof}
For all values of $m$ and $n$, $av(Tad_{m,n}) = 3, ev(Tad_{m,n}) = 1,$ and $ov(Tad_{m,n}) = 2.$ By Theorem 1 of \cite{mintiles}, $3 \leq T(Tad_{m,n}) \leq 5$. The lower bound is achieved by the following pot. Example labelings of $Tad_{6,1}$ and $Tad_{5,2}$ are shown in Figure \ref{fig:Scen12}.
\begin{equation*}  P = \{t_1 = \{\hat{a}, a\}, t_2 = \{ \hat{a}, a^2\}, t_3 = \{\hat{a}\}\} \end{equation*}
\end{proof}

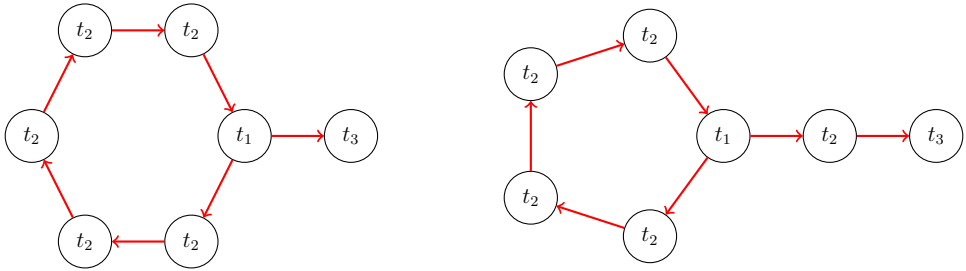
\begin{figure}[h!]
     \centering
	 \begin{tikzpicture}[transform shape, scale = 0.7]
 \node[main node] (a) at (0,0) {$t_1$};
 \node[main node] (b) at (-1,-2) {$t_2$};
 \node[main node](c) at (-3,-2) {$t_2$};
 \node[main node] (d) at (-4,0) {$t_2$};
 \node[main node] (e) at (-3,2) {$t_2$};
 \node[main node] (f) at (-1,2) {$t_2$};
 \node[main node] (g) at (2,0) {$t_3$};
		 
\path[draw,thick,color=red,->]
(a) edge node []{} (b)
(c) edge node []{} (d)
(d) edge node []{} (e)
(e) edge node []{} (f)
(f) edge node []{} (a)
(b) edge node []{} (c)
(a) edge node []{} (g);

 \node[main node] (h) at ( 9,0) {$t_1$};
 \node[main node] (i) at (7.62
, -1.90) {$t_2$};
 \node[main node](j) at (5.38
,-1.17) {$t_2$};
 \node[main node] (k) at (5.38
,1.17) {$t_2$};
 \node[main node] (l) at (7.62, 1.90
) {$t_2$};
\node[main node] (m) at ( 11,0) {$t_2$};
\node[main node] (n) at ( 13,0) {$t_3$};

\path[draw,thick,color=red,->]
(h) edge node []{} (i)
(i) edge node []{} (j)
(j) edge node []{} (k)
(k) edge node []{} (l)
(l) edge node []{} (h)
(h) edge node []{} (m)
(m) edge node []{} (n)
;
\end{tikzpicture}

\caption{Scenario 1 labeling of $Tad_{6,1}$ and $Tad_{5,2}$} 
\label{fig:Scen12}
\end{figure}

\subsection{Scenario 2}

The difficulty in determining minimum numbers of tile and bond-edge types for tadpole graphs increases dramatically with Scenario 2. Again, as with lollipop graphs, we observe differences between even and odd numbers of vertices in the non-path subgraph. Additionally, in tadpole graphs, results vary by the length of the appended path.

\begin{proposition} \label{prop:B2tadpoleshortpath}
$B_2(Tad_{m, n}) = \left\lceil \frac{m}{2} \right\rceil$ for $n \leq \left \lceil \frac{m}{2} \right\rceil$.
\end{proposition}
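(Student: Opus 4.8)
The plan is to prove the two inequalities $B_2(Tad_{m,n}) \ge \lceil m/2 \rceil$ and $B_2(Tad_{m,n}) \le \lceil m/2 \rceil$ separately. The lower bound will come entirely from the $C_m$ subgraph, and the upper bound from an explicit pot that reuses the cycle's bond-edge types along the appended path. The hypothesis $n \le \lceil m/2 \rceil$ is exactly what guarantees that enough cycle bond-edge types are available to label the $n$ path edges without repeating any one of them within the path, as Lemma \ref{S2NoRepeatedBondEdge} requires.

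For the lower bound I would argue that no bond-edge type can label three or more of the $m$ edges of the cycle subgraph. Since each cycle edge carries one of only two orientations relative to a fixed traversal of $C_m$, three edges sharing a bond-edge type would force two of them to share an orientation. Two like-oriented edges with the same bond-edge type can be broken and re-bonded so that the arc of the cycle between them that avoids the bridging vertex closes into a complete complex of its own; this complex is a pure cycle on at most $m-1$ vertices, hence a realized graph of order strictly less than $m+n$, violating Scenario 2. This is precisely the reasoning behind the value $B_2(C_m) = \lceil m/2 \rceil$ recorded in Table \ref{table:Cycle Graph}, and it is unaffected by the extra arm at the degree-$3$ bridging vertex because the split-off cycle contains no path tiles. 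With at most two cycle edges per bond-edge type and $m$ cycle edges in total, at least $\lceil m/2 \rceil$ distinct bond-edge types are needed, so $B_2(Tad_{m,n}) \ge \lceil m/2 \rceil$.

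For the upper bound I would start from an optimal Scenario 2 labeling of $C_m$ that uses $\lceil m/2 \rceil$ bond-edge types, each labeling at most two cycle edges. Because $n \le \lceil m/2 \rceil$, I can assign to the $n$ path edges $n$ distinct types chosen from these same $\lceil m/2 \rceil$ types, oriented so that the path tiles are forced into a single directed chain rooted at the bridging vertex and cannot detach or close up. This yields a pot $P$ with exactly $\lceil m/2 \rceil$ bond-edge types that realizes $Tad_{m,n}$. To confirm that $P$ realizes nothing of smaller order, I would write down $M(P)$ and compute $\mathcal{S}(P)$, then show that the least common multiple of the denominators of any admissible tile-proportion vector is at least $m+n$; by the results of \cite{mintiles} this certifies that the smallest complete complex realized by $P$ has order $m+n$.

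I expect the upper bound to be the main obstacle. The difficulty is that reusing a cycle bond-edge type on a path edge makes that type appear three times in the graph, which is exactly the configuration that can open up a smaller complex, so the choice of which types to reuse and with what orientation must be made carefully. Moreover, as noted in the introduction, such pots frequently have a spectrum with one or more degrees of freedom, so the least-common-multiple argument may not settle matters directly; in that case I would fall back on the combinatorial method of exhausting the admissible tile proportions and checking how the tile arms can bond, establishing a lower bound of $m+n$ on the size of any complete complex. It is also likely that the cases of even and odd $m$ must be treated separately, since the parity of $m$ governs whether every bond-edge type labels two cycle edges or one type is left labeling a single edge.
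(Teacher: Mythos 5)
Your proposal takes essentially the same route as the paper's proof: the lower bound via the pigeonhole/orientation argument on the $C_m$ subgraph (two like-oriented edges of the same bond-edge type split off a smaller cycle, which is exactly the argument from Proposition 8 of \cite{mintiles} that the paper invokes), and the upper bound via an explicit pot that reuses $n$ of the cycle's $\left\lceil \frac{m}{2} \right\rceil$ bond-edge types along the path, verified through the construction matrix and spectrum, with a combinatorial tile-bonding argument as fallback when the spectrum has a degree of freedom. You correctly anticipated every difficulty the paper actually encounters: the even/odd split (the paper gives separate pots), and the degree-of-freedom issue (the paper's even-$m$ spectrum does have one free variable, forcing precisely the kind of exhaustive bonding analysis you describe, while the odd-$m$ spectrum settles matters directly via the least common multiple of the denominators).
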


\begin{proof}
\label{prop:B2eventad}
 As shown in Proposition 8 of \cite{mintiles}, if there are fewer than $\left\lceil \frac{m}{2} \right\rceil$ bond-edges types used to label the edges within the $C_{m}$ subgraph, then one or more bond-edge types will be repeated at least three times and at least two of the three edges labeled with this bond-edge type will have the same labeling orientation. The resulting pot realizes two smaller graphs, a cycle and a smaller tadpole graph; an example is shown in Figure \ref{fig:smallergraphsoddtadpole}. Therefore, $B_2(Tad_{m,n}) \geq \left\lceil \frac{m}{2} \right\rceil$. \par

\begin{figure}[hb!]
     \centering
	 \begin{tikzpicture}[transform shape, scale = 0.7]
\node[main node, fill=black, minimum size = 0.3cm] (h) at ( 9,0) {};
 \node[main node, fill=black, minimum size = 0.3cm] (i) at (7.62
, -1.90) {};
 \node[main node, fill=black, minimum size = 0.3cm](j) at (5.38
,-1.17) {};
 \node[main node, fill=black, minimum size = 0.3cm] (k) at (5.38
,1.17) {};
 \node[main node, fill=black, minimum size = 0.3cm] (l) at (7.62, 1.90
) {};
\node[main node, fill=black, minimum size = 0.3cm] (m) at ( 11,0) {};
\node[main node, fill=black, minimum size = 0.3cm] (n) at ( 13,0) {};

\path[draw,thick,color=red,->]
(h) edge node []{} (i)
(j) edge node []{} (k);

\path[draw,thick,color=black]
(i) edge node []{} (j)
(k) edge node []{} (l)
(l) edge node []{} (h)
(h) edge node []{} (m)
(m) edge node []{} (n);
\end{tikzpicture} \hspace{10mm}
\begin{tikzpicture}[transform shape, scale = 0.7]
\node[main node, fill=black, minimum size = 0.3cm] (h) at ( 9,0) {};
 \node[main node, fill=black, minimum size = 0.3cm] (i) at (9
, -2) {};
 \node[main node, fill=black, minimum size = 0.3cm](j) at (5
,-2) {};
 \node[main node, fill=black, minimum size = 0.3cm] (k) at (5
,0) {};
 \node[main node, fill=black, minimum size = 0.3cm] (l) at (7, 1.90
) {};
\node[main node, fill=black, minimum size = 0.3cm] (m) at ( 11,0) {};
\node[main node, fill=black, minimum size = 0.3cm] (n) at ( 13,0) {};

\path[draw,thick,color=red,->]
(j) edge [bend left] node []{} (i)
(h) edge node []{} (k);

\path[draw,thick,color=black]
(i) edge [bend left] node []{} (j)
(k) edge node []{} (l)
(l) edge node []{} (h)
(m) edge node []{} (n)
(h) edge node []{} (m);
\end{tikzpicture}
\caption{Smaller graphs formed from a $Tad_{5,2}$ labeling} 
\label{fig:smallergraphsoddtadpole}
\end{figure}
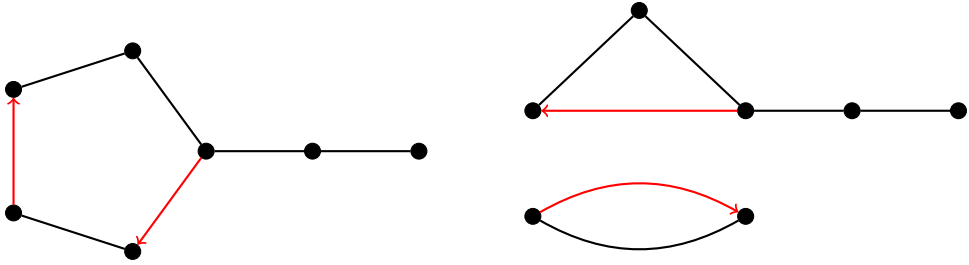

The lower bound is achieved by the following pots for $m$ even and odd. Example labelings of $Tad_{5,2}$ and $Tad_{6,2}$ are shown in Figure \ref{fig:tadpoleS2shortpath}.

\begin{equation}\label{eq:TadpoleS2evenshortpath} \begin{split}  P_{(2k,n)} = \{t_1 = \{a_1^2, a_{k - n + 1}\}, t_i = \{\hat{a}_{i - 1}, a_i\} \text{ for } 2 \leq i \leq  k,
t_{k+ 1} = \{\hat{a}_k^2\},  t_{k + 2} = \{\hat{a}_{k}\}\} \end{split} \end{equation}

\begin{equation}\label{eq:TadpoleS2oddshortpath} \begin{split}  P_{(2k+1,n)} = \{t_1 = \{a_1^2, a_{k-n+2}\}, t_i = \{\hat{a}_{i - 1}, a_i\} \text{ for } 2 \leq i \leq k, t_{k+1} = \{\hat{a}_k, \hat{a}_{k+1}\},\\
t_{k+2} = \{\hat{a}_k, a_{k+1}\}, t_{k+3} = \{\hat{a}_k\}\} \end{split} \end{equation}

The construction matrices and spectrums of $P_{(2k,n)}$ and $P_{(2k+1,n)}$ follow. In each matrix, the upward arrow in the first column denotes that the position of the `1' below the arrow moves upward as the value of $n$ increases, since one arm of $t_1$ is dependent on the value of $n$. The `1' moves from row $\lceil \frac{m}{2} \rceil$ ($n=1$) to row 2 ($n=\lceil \frac{m}{2} \rceil -1$), and if $n=\lceil \frac{m}{2} \rceil$ then the `2' in matrix entry $(1,1)$ will become a `3'.  Even so, it is still possible to determine general solution sets.

\begin{equation*}M(P_{(2k,n)}) = \begin{bmatrix} 
2 & -1 & 0 & 0 &\cdots & 0 & 0 & 0  \\ 
0 & 1 & -1 & 0 & \cdots & 0 & 0 & 0 \\ 
\vdots & 0 & \ddots & \ddots & \ddots & 0 & 0 & 0 \\
\uparrow  & 0 & \cdots & 1 & -1 & 0 & 0 & 0 \\
1 & 0 & \cdots & 0  & -1 & -2 & -1 & 0  \\
1 & 1 &  \cdots & 1 & 1 & 1 & 1 & 1 \end{bmatrix}\end{equation*}

\begin{equation*} \begin{split} \mathcal{S}(P_{(2k,n)}) = \left\{\left\langle r_1 = \frac{2}{2(2k +  n)- 1} - \frac{1}{2(2k +  n)- 1}r_{k+2},\right. \right. \\ r_i = \frac{4}{2(2k +  n)- 1} - \frac{2}{2(2k +  n)- 1}r_{k+2}
\text{ for } 2 \leq i \leq  k -n + 1, \\ r_j \frac{6}{2(2k +  n)- 1} - \frac{3}{2(2k +  n)- 1}r_{k+2} \text{ for } k - n + 2 \leq j \leq k,\\ r_{k + 1} = \frac{3}{2(2k +  n)- 1} - \frac{2k + n + 1}{2(2k +  n)- 1}r_{k + 2}, \left. \left. r_{k + 2}  \right\rangle \mid r_{k + 2} \in \mathbb{Q}^+\right\} \end{split} \end{equation*}

\begin{equation} M(P_{(2k+1,n)}) = \begin{bmatrix} 
2 & -1 & 0 & 0 & 0 &\cdots  & 0 & 0 & 0  \\ 
0 & 1  & -1 & 0 & 0 & \cdots & 0 & 0 & 0 \\ 
\vdots & \ddots & \ddots & \ddots & \ddots & \ddots & 0 & 0 & 0 \\
\uparrow & 0 & 0 & 1  & -1 & 0 & 0 & 0 & 0  \\ 
1 & 0 & 0 & 0 & 1 & -1  & -1 & -1 & 0 \\
0 & 0 & 0 & 0 & 0 & -1 & 1  & 0 & 0  \\
1 &  1 & 1 & \cdots & 1 & 1 & 1 & 1 & 1 \end{bmatrix} \end{equation}

\begin{equation*} \begin{split} \mathcal{S}(P_{(2k+1,n)}) = \left\{\left\langle r_1 = \frac{1}{2k+n+1}, r_i = \frac{2}{2k+n+1} \text{ for }  2 \leq i \leq k+1 - n, r_j = \frac{3}{2k+1 + n} \right. \right. \\ \left. \left.\text{ for } k - n + 2 \leq j \leq k, 
r_{k+1}, r_{k+2} = \frac{3}{2(2k+n+1)} - \frac{1}{2}r_{k+3}, r_{k+3} \right\rangle \mid  r_{k+3} \in \mathbb{Q}^+\right\} \end{split} \end{equation*}

The least common multiple of the vector component denominators in $\mathcal{S}(P_{(2k+1,n)})$ is at least $2k+n+1$,  and therefore no graphs of smaller order than $Tad_{2k+1,n}$ can be realized by the pot.\par 
On the other hand, $\mathcal{S}(P_{(2k,n)})$ has one degree of freedom and no clear least common multiple of the vector component denominators. Therefore, it is not immediately helpful in determining that no smaller graphs can be realized. Instead, we use a combination of $\mathcal{S}(P_{(2k,n)})$ and the ability of various tiles to bond together to argue that nothing smaller will be realized. If $r_1 = 0,$ then $r_{k + 2} = 2,$ and $r_{k + 1} =  -1.$ Tile proportions outside of the allowable range $[0,1]$ signify that a complete complex can not form if $r_1 = 0$. Consequently, $t_1$ must be included. A single $t_1$ tile must bond to two $t_2$ tiles, as that is the only tile with an $\hat{a_1}$ arm. Similarly, a $t_3$ tile must bond to each $t_2$ tile's $a_2$ arm. This process continues until two $t_{k}$ tiles are bonded to the complex, each with an extending $a_{k}$ arm. In addition, the $a_{k-n+1}$ arm of tile $t_1$ must bond to tile $t_{k-n+2}$. Once again, this process will continue until tile $t_{k}$ which has an extending $a_{k}$ arm. This complex is shown in Figure \ref{fig:B2evenTad}.

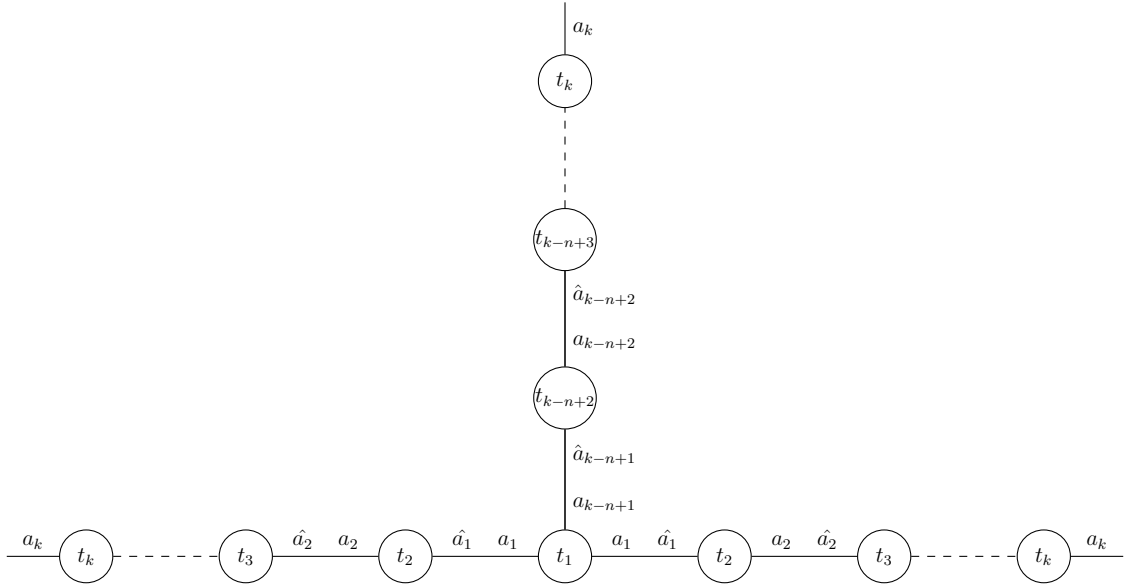
\begin{figure}[h!]
\centering
	 \begin{tikzpicture}[transform shape, scale = 0.7]
 \node[main node] (a) at (-9,0) {$t_k$};
 \node[main node] (b) at (-6,0) {$t_{3}$};
 \node[main node] (c) at (-3,0) {$t_{2}$};
  \node[main node] (k) at (0,3) {$t_{k-n+2}$};
 \node[main node] (l) at (0,6) {$t_{k-n+3}$};
 \node[main node] (o) at (0,9) {$t_{k}$};
  \node[main node] (d) at (0,0) {$t_1$};
\node[main node] (g) at (3,0) {$t_2$};
\node[main node] (h) at (6,0) {$t_3$};
\node[main node] (i) at (9,0)  {$t_{k}$};
 \node[main node, fill=white, draw=white, minimum size = 0cm] (j) at (0,10.5) {};
 \node[main node, fill=white, draw=white, minimum size = 0cm] (e) at (-10.5,0) {};
 \node[main node, fill=white, draw=white, minimum size = 0cm] (f) at (10.5,0){};
		 
\path[draw]
(d) edge node [near start, right]{$a_{k-n+1}$} (k)
(k) edge node [near start, right]{$\hat{a}_{k-n+1}$} (d)
(l) edge node [near start, right]{$\hat{a}_{k-n+2}$} (k)
(k) edge node [near start, right]{$a_{k-n+2}$} (l)
(b) edge node [above]{$\hat{a_2}$ \hspace{2mm} $a_2$} (c)
(c) edge node [above]{$\hat{a_1}$ \hspace{2mm} $a_1$} (d)
(d) edge node [above]{$a_1$ \hspace{2mm} $\hat{a_1}$} (g)
(g) edge node [above]{$a_2$ \hspace{2mm} $\hat{a_2}$} (h)
(i) edge node [above]{$a_k$} (f)
(o) edge node [right]{$a_k$} (j)
(a) edge node [above]{$a_k$} (e);

\path[draw, dashed]
(a) edge node []{} (b)
(h) edge node []{} (i)
(l) edge node []{} (o);
\end{tikzpicture}

\caption{Complex formed in proof of Proposition \ref{prop:B2eventad}} 
\label{fig:B2evenTad}
\end{figure}
At this point the complex is of size $2k - 2 + n$ and has exactly three un-bonded extending arms, all of which are labeled with $a_k$. These arms cannot bond to one another and there are no tiles in $P_{(2k,n)}$ with three $\hat{a_k}$ arms, so at least two tiles are needed. Therefore, any complete complex will be of size at least $2k+n$, the target graph order.
\end{proof}

\begin{figure}[h!]
     \centering
\begin{tikzpicture}[transform shape, scale = 0.7]
 \node[main node] (i) at (-6,0) {$t_1$};
 \node[main node] (j) at (-7.38
, -1.90) {$t_2$};
 \node[main node](k) at (-9.62
,-1.17) {$t_4$};
 \node[main node] (l) at (-9.62
,1.17) {$t_3$};
 \node[main node] (m) at (-7.38, 1.90
) {$t_2$};
\node[main node] (n) at (-4,0) {$t_3$};
\node[main node] (o) at (-2,0) {$t_5$};

\path[draw,thick,color=red,->]
(i) edge node []{} (m)
(i) edge node []{} (j);

\path[draw,thick,color=blue,->]
(m) edge node []{} (l)
(j) edge node []{} (k)
(i) edge node []{} (n);

\path[draw,thick,color=green,->]
(n) edge node []{} (o)
(l) edge node []{} (k);

 \node[main node] (a) at (5,0) {$t_1$};
 \node[main node] (b) at (4,-2) {$t_2$};
 \node[main node](c) at (2,-2) {$t_3$};
 \node[main node] (d) at (1,0) {$t_4$};
 \node[main node] (e) at (2,2) {$t_3$};
 \node[main node] (f) at (4,2) {$t_2$};
 \node[main node] (g) at (7,0) {$t_3$};
  \node[main node] (h) at (9,0) {$t_5$};
		 
\path[draw,thick,color=red,->]
(a) edge node []{} (f)
(a) edge node []{} (b);

\path[draw,thick,color=blue,->]
(f) edge node []{} (e)
(b) edge node []{} (c)
(a) edge node []{} (g);

\path[draw,thick,color=green,->]
(c) edge node []{} (d)
(g) edge node []{} (h)
(e) edge node []{} (d);

\end{tikzpicture}

\caption{Scenario 2 labelings of $Tad_{5,2}$ and $Tad_{6,2}$} 
\label{fig:tadpoleS2shortpath}
\end{figure}
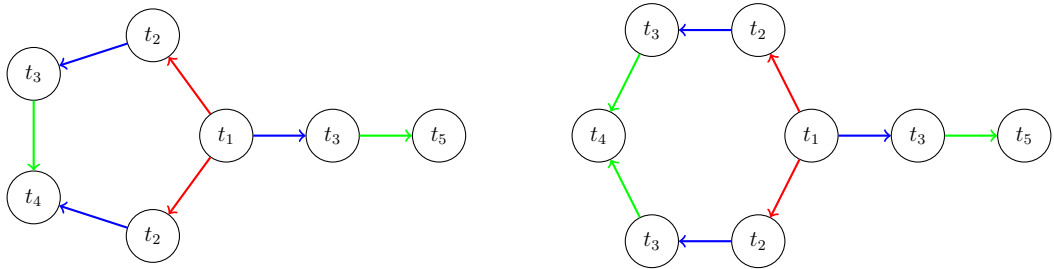

\begin{proposition}\label{prop:T2tadpoleshortpath}
$T_2(Tad_{m,n})=\left\lceil\frac{m}{2}\right\rceil+2$ for $n \leq \left\lceil \frac{m}{2}\right\rceil$.
\end{proposition}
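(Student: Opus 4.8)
The plan is to establish the two bounds separately. For the upper bound, I would simply count tile types in the pots $P_{(2k,n)}$ and $P_{(2k+1,n)}$ exhibited in the proof of Proposition \ref{prop:B2tadpoleshortpath}. In the even case $m=2k$ the pot uses the degree-$3$ tile $t_1$, the degree-$2$ tiles $t_2,\dots,t_{k+1}$, and the degree-$1$ tile $t_{k+2}$, for a total of $k+2=\lceil\frac{m}{2}\rceil+2$ types; in the odd case $m=2k+1$ the analogous count gives $k+3=\lceil\frac{m}{2}\rceil+2$. Since that proof already verifies that these pots realize $Tad_{m,n}$ and no graph of smaller order, Scenario 2 is satisfied and $T_2(Tad_{m,n})\le\lceil\frac{m}{2}\rceil+2$.

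For the lower bound I would first invoke Theorem 2 of \cite{mintiles} together with Proposition \ref{prop:B2tadpoleshortpath} to obtain $T_2(Tad_{m,n})\ge B_2(Tad_{m,n})+1=\lceil\frac{m}{2}\rceil+1$, and then argue that one additional tile type is unavoidable. The key structural observation is that the single degree-$1$ vertex at the end of the path and the single degree-$3$ bridging vertex each force a tile type of their own arity: a one-armed tile can only label the pendant vertex and a three-armed tile can only label the bridging vertex, so these two types are distinct from one another and from every degree-$2$ tile type. Consequently it suffices to prove that at least $\lceil\frac{m}{2}\rceil$ distinct degree-$2$ tile types must be used, since adjoining the two special types then yields the desired count of $\lceil\frac{m}{2}\rceil+2$.

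To bound the number of degree-$2$ tile types, I would argue by contradiction, assuming a valid Scenario 2 pot uses at most $\lceil\frac{m}{2}\rceil-1$ of them. Replacing the bridging tile by the degree-$2$ tile carrying only its two cycle-arms produces a pot all of whose tiles have degree $2$ and which realizes $C_m$ using at most $\lceil\frac{m}{2}\rceil$ tile types. This contradicts the known value $T_2(C_m)=\lceil\frac{m}{2}\rceil+1$ recorded in Table \ref{table:Cycle Graph}, so the modified pot must realize some graph of order strictly less than $m$; because all of its tiles have degree $2$, any such graph is a disjoint union of cycles. The main obstacle, and the step requiring the most care, is translating this smaller complex back into a complex realizable by the original tadpole pot while keeping its order below $m+n$: if the smaller complex avoids the substituted tile it is immediately a forbidden complex for the original pot, but if it uses the substituted tile one must reattach a copy of the appended path at each such vertex and then check that the resulting order remains below $m+n$. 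I expect to resolve this by passing to a minimal (for instance, connected and single-occurrence) smaller complex, mirroring the forced-complex analysis already used for the even case in the proof of Proposition \ref{prop:B2tadpoleshortpath} (see Figure \ref{fig:B2evenTad}), rather than attempting the substitution at arbitrarily many occurrences simultaneously.
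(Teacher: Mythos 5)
Your overall strategy is exactly the paper's: the upper bound comes from counting tiles in the pots of Proposition \ref{prop:B2tadpoleshortpath} (your counts $k+2$ and $k+3$ are correct), and the lower bound comes from replacing the three-armed bridging tile $t_p$ by its two-armed restriction $t_p'$, observing that the resulting pot $P'$ realizes $C_m$, invoking the known Scenario~2 value for cycles, and then adding the forced one-armed and three-armed types. The paper runs this reduction directly ($P$ valid for $Tad_{m,n}$ implies $P'$ valid for $C_m$, hence $P'$ has at least $T_2(C_m)=\lceil m/2\rceil+1$ tile types) rather than by contradiction, but that is a cosmetic difference, not a different route.

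The genuine issue is the step you yourself single out: transferring a complete complex $H$ realized by $P'$ with $|H|<m$ back to a complete complex realized by $P$ of order less than $m+n$. If $H$ uses $t_p'$ zero times or once the transfer is immediate, but your proposed repair for the multiple-occurrence case --- pass to a connected, single-occurrence sub-complex --- is not always available. Writing $t_p'=\{x,y\}$, a component of $H$ is a cycle in which consecutive occurrences of $t_p'$ are joined by chains of two-armed tiles; a sub-cycle using $t_p'$ once can be spliced out exactly when two cyclically consecutive occurrences have the \emph{same} orientation, since the chain between them then runs from an $\hat{x}$ end to a $\hat{y}$ end and closes up against a single copy of $t_p'$. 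But the orientations can alternate: for example $H$ could be the $4$-cycle $t_p',\{\hat{x},\hat{x}\},t_p',\{\hat{y},\hat{y}\}$, whose two connecting chains have ends $\hat{x},\hat{x}$ and $\hat{y},\hat{y}$ respectively, and then no complete complex built from these tiles uses $t_p'$ exactly once. In that case the only evident transfer replaces each of the $j\geq 2$ occurrences of $t_p'$ by $t_p$ with a pendant path attached, giving order $|H|+jn$, which need not be below $m+n$ (take $|H|=m-1$, $j=2$); so no contradiction results and the argument stalls. To be fair, the paper's proof is silent on exactly this point --- ``attaching the path to $H$ via the third arm of tile $t_p$'' tacitly assumes a single occurrence --- so your proposal is no less rigorous than the published proof; but to close the case honestly one needs a separate argument, e.g.\ a cohesive-end count: the $m$ cycle edges carry at least $\lceil m/2\rceil$ bond-edge types (proof of Proposition \ref{prop:B2tadpoleshortpath}), hence at least $2\lceil m/2\rceil$ distinct cohesive ends must appear on arms of the cycle tiles, while only $2+2\tau$ are available from $t_p$ and $\tau$ two-armed types; if $\tau=\lceil m/2\rceil-1$ this forces every cohesive end to lie on a unique tile type and to be supplied in exactly the multiplicity the cycle edges demand, which fails because the arms $x$ and $y$ are supplied only once by the single copy of $t_p$. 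This yields $\tau\geq\lceil m/2\rceil$ directly and avoids the transfer problem altogether.
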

\begin{proof}
Suppose $P=\{t_1,t_2,...,t_p,t_{p+1},...,t_{q-1},t_q\}$ is a pot realizing $Tad_{m,n}$, where $t_1,...,t_p$ are tiles used to label the cycle and $t_p$ is the tile used to label the bridging vertex. Let $P'=\{t_1,...,t_{p-1},t_p'\}$, where $t_p'$ is a two-armed tile formed from removing the arm of $t_p$ used to label the half-edge extending towards the path. It is clear that if $P$ realizes the graph $Tad_{m,n}$ and no smaller graphs, then $P'$ must realize the graph $C_{m}$ and no smaller graphs; if $P'$ realizes a graph $H$ smaller than $C_{m}$, then $P$ would realize the graph formed by attaching the path to $H$ via the third arm of tile $t_p$, which would be smaller than $Tad_{m,n}$. By the argument presented in Proposition \ref{prop:B2tadpoleshortpath}, $P'$ must consist of at least $\left\lceil \frac{m}{2}\right\rceil$ bond-edge types. Since $T_2(G) \geq B_2(G) + 1$ for any graph $G$ \cite{mintiles}, $P'$ must consist of at least $\left\lceil \frac{m}{2} \right\rceil +1$ distinct tile types, so $P$ must consist of at least $\left\lceil \frac{m}{2} \right\rceil +1$ distinct tile types used to label degree 2 vertices in the cycle and, of course, a distinct tile type to label the degree 3 vertex. The vertex of degree 1 in $Tad_{m,n}$ requires an additional tile type, so $T_2(Tad_{m,n}) \geq \left\lceil \frac{m}{2} \right\rceil +2$. \par
The pots given in (\ref{eq:TadpoleS2evenshortpath}) and (\ref{eq:TadpoleS2oddshortpath}) achieve the lower bound.
\end{proof}

\begin{proposition}\label{prop:tadpoleS2mediumpath}
$B_2(Tad_{m, n}) = n$ and $T_2(Tad_{m, n}) = n + 2$ for $\lceil \frac{m}{2} \rceil \leq n \leq m$.
\end{proposition}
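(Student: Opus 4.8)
The plan is to establish the two equalities by bounding $B_2$ and $T_2$ from below and then exhibiting a pot that meets the bounds. For the lower bounds, note first that since $n \leq m$ the hypothesis of Lemma \ref{S2NoRepeatedBondEdge} is satisfied by the appended path of $Tad_{m,n}$, so immediately $B_2(Tad_{m,n}) \geq n$, with $n$ distinct tile types forced on the path. This already gives $B_2 = n$ once a pot with exactly $n$ bond-edge types is produced. To reach $T_2(Tad_{m,n}) \geq n+2$ I would argue that the cycle forces two tile types beyond those $n$ path tiles. The degree $3$ bridging vertex plainly needs a tile type distinct from the degree $1$ and degree $2$ tiles of the path, which contributes one; the delicate point is the second extra tile, namely the claim that no valid Scenario 2 labeling can label all $m-1$ degree $2$ cycle vertices using only the $n-1$ degree $2$ tiles of the path.

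The key observation for that claim is structural. Each degree $2$ path tile carries two arms whose bond-edge types are consecutive along the path, and since Lemma \ref{S2NoRepeatedBondEdge} forbids any repetition, these tiles link the $n$ bond-edge types into a single acyclic, path-like pattern. Reading the cycle $C_m$ as a closed sequence of tiles in which consecutive vertices share a bond-edge type, a labeling of $C_m$ that used only path tiles would correspond to a closed walk in this acyclic pattern; such a walk must retrace edges, and retracing forces a bond-edge type to repeat around the cycle with matching orientation. As in Proposition \ref{prop:B2tadpoleshortpath} (and Proposition 8 of \cite{mintiles}), such a same-orientation repeat lets a strictly smaller cycle detach, violating Scenario 2. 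Hence at least one degree $2$ cycle vertex must carry a tile type not used on the path, giving the second extra tile and $T_2(Tad_{m,n}) \geq n+2$. One may cross-check this against the reduction of Proposition \ref{prop:T2tadpoleshortpath}: the two-armed cut-down pot must realize $C_m$ and nothing smaller, hence use at least $\left\lceil \frac{m}{2} \right\rceil + 1$ tile types; for $n = \left\lceil \frac{m}{2} \right\rceil$ the counting alone forces $n+2$, and the acyclicity argument is what extends the conclusion to every larger $n$ in the range.

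For the matching upper bound I would write an explicit pot using exactly $n$ bond-edge types and $n+2$ tile types, treating even and odd $m$ separately as in Proposition \ref{prop:B2tadpoleshortpath}. The path is labeled with all $n$ bond-edge types and $n$ distinct tiles; because $n \geq \left\lceil \frac{m}{2} \right\rceil$, the cycle can then be labeled by reusing a suitable subset of these bond-edge types (it needs only $\left\lceil \frac{m}{2} \right\rceil$ of them), introducing just one new degree $2$ ``turning'' tile together with the degree $3$ bridging tile, for a total of $n+2$. To keep the global bond-edge balance at zero I would let the bridging tile repeat the bond-edge type on its connecting edge, so that its net contribution cancels against the path tiles that consume that type. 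I would then record the construction matrix and spectrum of the pot and argue that no complete complex of order smaller than $m+n$ can form.

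The main obstacle is this last verification, compounded by the precise form of the extra-tile argument. Because the cycle reuses path bond-edge types, the spectrum will carry one or more degrees of freedom and will admit no clean least-common-multiple conclusion, so ruling out smaller complexes requires the hands-on bonding analysis seen in the proof of Proposition \ref{prop:B2tadpoleshortpath} and Figure \ref{fig:B2evenTad}: one shows the bridging tile must appear, tracks the forced order in which the remaining tiles attach, and counts the resulting size to confirm it is at least $m+n$. Making the lower-bound acyclicity and orientation argument fully rigorous for every $n$ in the interval, and checking both it and the upper-bound pot uniformly across the parity of $m$, is the part I expect to demand the most care.
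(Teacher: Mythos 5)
Your overall architecture matches the paper's: Lemma \ref{S2NoRepeatedBondEdge} supplies $B_2(Tad_{m,n}) \geq n$ and $n$ forced path tiles, the degree 3 bridging vertex supplies one extra tile type, a claim that the cycle's degree 2 vertices cannot all be labeled by path tiles supplies the second, and the upper bound comes from an explicit pot (bridging tile $\{a_1^3\}$, consecutive path tiles, one ``turning'' tile) whose spectrum has a degree of freedom and therefore requires exactly the hands-on bonding analysis you anticipate. Where you genuinely diverge from the paper is in the justification of the second extra tile, and that is where there is a real gap.

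Your auxiliary-graph argument reads the cycle as a \emph{closed} walk in the acyclic pattern formed by the path tiles. But the cycle contains the bridging vertex, which is three-armed and hence not labeled by any path tile, so it imposes no ``consecutive types'' constraint between the two cycle edges incident to it: the induced walk is an \emph{open} walk of $m-1$ steps, and an open walk in a path-shaped pattern retraces an edge only when $m-1 > n-1$. For $n<m$ your conclusion therefore survives by pigeonhole, though the contradiction a retrace produces is not the one you name: in a path-shaped pattern a retrace forces an immediate backtrack, i.e.\ two copies of the same two-armed tile at \emph{adjacent} cycle vertices, which cannot bond because their arms of the shared type carry the same hat status. (Your stated mechanism --- retracing forces a same-orientation repeat of a bond-edge type, so a smaller cycle detaches --- cannot be right as stated, since the paper's own optimal pot repeats tiles in the cycle with opposite orientations.) At the endpoint $n=m$ the gap is fatal: the open walk can traverse the auxiliary path monotonically, using each of the $n-1$ path tiles exactly once around the cycle. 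This labeling is realizable by the pot of Proposition \ref{prop:tadpoleS2longpath} specialized to $n=m$, namely $\{t_1=\{a_1^2,\hat{a}_m\},\ t_i=\{\hat{a}_{i-1},a_i\} \text{ for } 2 \leq i \leq n,\ t_{n+1}=\{\hat{a}_n\}\}$, whose construction matrix has the unique solution $r_1=r_{n+1}=\frac{1}{2n}$, $r_i=\frac{1}{n}$ for $2 \leq i \leq n$, so no complete complex of order smaller than $m+n=2n$ can form. That is a valid Scenario 2 pot for $Tad_{m,m}$ with only $n+1$ tile types, so no argument can establish $T_2(Tad_{m,m})\geq n+2$. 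Be aware that the paper's own proof has the same defect at this endpoint (its premise that some path tile ``must be used to label multiple vertices in the cycle'' likewise fails when $n=m$); the repaired walk argument proves the claim only for $\lceil \frac{m}{2}\rceil \leq n < m$, and the case $n=m$ needs separate treatment.
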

\begin{proof} 
By Lemma \ref{S2NoRepeatedBondEdge}, $B_2(Tad_{m,n}) \geq n$ and $n$ distinct tile types are required to label the vertices of the path. 
There are $n-1$ two-armed tile tiles, and the arms have different bond-edge types. 
To prevent new two-armed tile types from being necessary in the cycle graph, since $n \leq m$, at least one two-armed tile type used in the path must be used to label multiple vertices in the cycle.  
A bond-edge type cannot be repeated more than twice in the cycle. Therefore, a two-armed tile in the path can be used only twice in the cycle. Furthermore, the edges created must be of opposite orientation as regarded when moving around the cycle. In this way, we can repeat up to $\lceil \frac{m}{2} \rceil - 1$ of the $n-1$ two-armed tile types from the path in the cycle. One additional two-armed tile type is required to label one of the degree 2 vertices in the cycle due to opposite orientations of repeated bond-edges. Finally, a three-armed tile type is needed to label the bridging vertex. Thus, $T_2(Tad_{m,n}) \geq n+2.$ The following pots for $m$ even and odd achieve the lower bounds. Example labelings of $Tad_{5,4}$ and $Tad_{6,4}$ are shown in Figure \ref{fig:tadpoleS2mediumpath}.

\begin{equation}\label{eq:T2evenlongpathpot} \begin{split}  P_{(2k,n)} = \{t_1 = \{a_1^3\}, t_i = \{\hat{a}_{i - 1}, a_i\} \text{ for } 2 \leq i \leq n, t_{n+1} = \{\hat{a}_{n}\},
t_{n+2} = \{\hat{a}_{k}^2\} \end{split} \end{equation}

\begin{equation}\label{eq:tadpoleoddlongpathpot} \begin{split}  P_{(2k+1,n)} = \{t_1 = \{a_1^3\}, t_i = \{\hat{a}_{i - 1}, a_i\} \text{ for } 2 \leq i \leq n, t_{n+1} = \{\hat{a}_{n}\},
  t_{n+2} = \{\hat{a}_{k}, \hat{a}_{k+1}\} \end{split} \end{equation}

The construction matrices and spectrums of $P_{(2k,n)}$ and $P_{(2k+1,n)}$ follow.

\begin{equation*} M(P_{(2k,n)}) = \begin{bmatrix} 
3 & -1 & 0 & \cdots & & & & & \cdots & 0 & 0 \\ 
0 & 1 & -1 & 0 & \cdots & & & & \cdots & 0 & 0 \\ 
\vdots & \ddots & \ddots & \ddots & \ddots & & & &  & \vdots & \vdots\\
 & & 0 & 1 & -1 & 0 & \cdots & & \cdots & 0 & 0 \\
 & & & 0 & 1 & -1 & 0 & \cdots & 0 & -2 & 0 \\
 & & & & 0 & 1 & -1 & 0 & \cdots & 0 & 0 \\ 
\vdots & & & & & \ddots & \ddots & \ddots & \ddots & \vdots & \vdots \\
0 & \cdots & & & & \cdots & 0 & 1 & -1 & 0 & 0 \\
1 & 1 & \cdots & & & & & \cdots & 1 & 1 & 1 \end{bmatrix} \end{equation*}

\begin{equation*} \begin{split} \mathcal{S}(P_{(2k,n)}) = \left\{\left\langle r_1 = \frac{1}{3n+1} - \frac{2k-2n-1}{3n+1}r_{n + 2}, r_i = \frac{3}{3n+1} - \frac{6k-6n-3}{3n+1}r_{n + 2} \right.\right.\\ \text{ for } i = 2, \cdots , k, r_j = \frac{3}{3n+1}-\frac{6k-1}{3n+1}r_{n+2} \text{ for } k+1 \leq j \leq n+1, \\\left. \left.
r_{n+2}\right\rangle \mid r_{n + 2} \in \mathbb{Q}^+ \right\}\end{split} \end{equation*}

\begin{equation*} M(P_{(2k+1,n)}) = \begin{bmatrix} 
3 & -1 & 0 & \cdots & & & & & & \cdots & 0 & 0 & 0 \\ 
0 & 1 & -1 & 0 & \cdots & & & & & \cdots & 0 & 0 & 0 \\
\vdots & \ddots & \ddots & \ddots & \ddots & & & & & & \vdots & \vdots & \vdots \\
 & & 0 & 1 & -1 & 0 & \cdots & & & \cdots & 0 & 0 & 0 \\
 & & & 0 & 1 & -1 & 0 & \cdots & & \cdots & 0 & -1 & 0 \\
 & & & & 0 & 1 & -1 & 0 & \cdots & & 0 & -1 & 0 \\
 & & & & & 0 & 1 & -1 & 0 & \cdots & 0 & 0 & 0 \\
\vdots & & & & & & \ddots & \ddots & \ddots & \ddots & \vdots & \vdots & \vdots \\
0 & \cdots & & & & & \cdots & 0 & 1 & -1 & 0 & 0 & 0 \\
1 & 1 & \cdots & & & & & & & \cdots & 1 & 1 & 1 \end{bmatrix} \end{equation*}

\begin{equation*} \begin{split} \mathcal{S}(P_{(2k+1,n)}) = \left\{\left\langle r_1 = \frac{1}{3n+1} - \frac{2k-2n}{3n+1}r_{n+2}, r_i = \frac{3}{3n+1} - \frac{6k-6n}{3n+1}r_{n + 2} \text{ for } 2 \leq i \leq k, \right.\right.\\ r_{k+1} = \frac{3}{3n+1}-\frac{6k-3n+1}{3n+1}r_{n+2}, r_j = \frac{3}{3n+1} -\frac{6k+2}{3n+1}r_{n+2} \\\left. \left. \text{ for } k+2 \leq j \leq n+1,   r_{n+2} \right\rangle \mid r_{n + 2} \in \mathbb{Q}^+\right\}\end{split} \end{equation*}

The spectrums of these pots each have one degree of freedom and no clear least common multiple of the vector component denominators, so they are not immediately helpful in determining that no smaller graphs can be realized. Once again we use a combination of the spectrum and the ability of various tile arms to bond together to show that nothing smaller will be realized. \par 
For both $\mathcal{S}(P_{(2k,n)})$ and $\mathcal{S}(P_{2k+1,n}))$, if $r_1 = 0,$ then $r_{n+2} <0$, since $n \geq \lceil \frac{m}{2} \rceil$. Tile proportions outside of the allowable range $[0,1]$ signify that a complete complex cannot be realized if $r_1=0$, so $t_1$ must be included. In both pots, a single $t_1$ tile must bond to three $t_2$ tiles, as that is the only tile with an $\hat{a}_1$ arm. Similarly, tiles $t_3,...,t_k$ must bond in sequence to each $t_2$ tile, forming an incomplete complex of size $3k-2$. This complex, as shown in Figure \ref{fig:tadpoleS2mediumpath},
consists of a central $t_1$ tile and three emanating paths, each ending in an un-bonded $a_k$ arm. Tiles $t_{k+1}$ or $t_{n+2}$ can bond to each of these three arms. 

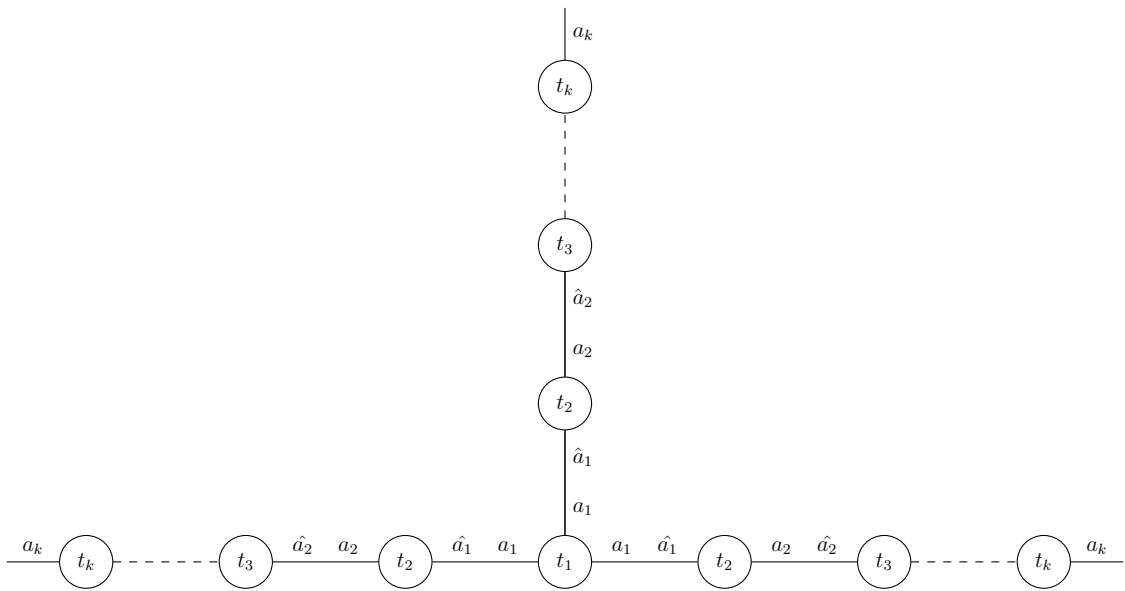
\begin{figure}[h!]
\centering
	 \begin{tikzpicture}[transform shape, scale = 0.7]
 \node[main node] (a) at (-9,0) {$t_k$};
 \node[main node] (b) at (-6,0) {$t_{3}$};
 \node[main node] (c) at (-3,0) {$t_{2}$};
  \node[main node] (k) at (0,3) {$t_{2}$};
 \node[main node] (l) at (0,6) {$t_{3}$};
 \node[main node] (o) at (0,9) {$t_{k}$};
  \node[main node] (d) at (0,0) {$t_1$};
\node[main node] (g) at (3,0) {$t_2$};
\node[main node] (h) at (6,0) {$t_3$};
\node[main node] (i) at (9,0)  {$t_{k}$};
 \node[main node, fill=white, draw=white, minimum size = 0cm] (j) at (0,10.5) {};
 \node[main node, fill=white, draw=white, minimum size = 0cm] (e) at (-10.5,0) {};
 \node[main node, fill=white, draw=white, minimum size = 0cm] (f) at (10.5,0){};
		 
\path[draw]
(d) edge node [near start, right]{$a_{1}$} (k)
(k) edge node [near start, right]{$\hat{a}_{1}$} (d)
(l) edge node [near start, right]{$\hat{a}_{2}$} (k)
(k) edge node [near start, right]{$a_{2}$} (l)
(b) edge node [above]{$\hat{a_2}$ \hspace{2mm} $a_2$} (c)
(c) edge node [above]{$\hat{a_1}$ \hspace{2mm} $a_1$} (d)
(d) edge node [above]{$a_1$ \hspace{2mm} $\hat{a_1}$} (g)
(g) edge node [above]{$a_2$ \hspace{2mm} $\hat{a_2}$} (h)
(i) edge node [above]{$a_k$} (f)
(o) edge node [right]{$a_k$} (j)
(a) edge node [above]{$a_k$} (e);

\path[draw, dashed]
(a) edge node []{} (b)
(h) edge node []{} (i)
(l) edge node []{} (o);
\end{tikzpicture}

\caption{Complex formed in proof of Proposition \ref{prop:tadpoleS2mediumpath}.} 
\label{fig:tadpoles2mediumpath}
\end{figure}

In $P_{(2k,n)}$, to minimize the number of additional tiles, $t_{n+2}$ must bond to two of the extending $a_k$ arms, bringing the complex to size $3k-1$. The other possibilities for tile attachments ($t_{k+1}$ bonding to two or three of the $a_k$ arms) leads to extending paths beyond the length of the target graph. For the final extending $a_k$ arm, there are two possible options for tile attachment. If tile $t_{n+2}$ bonds to the $a_k$ arm, then a $t_k$ tile must bond to the $t_{n+2}$ tile as that is the only tile with an $a_k$ arm. Similarly, a $t_{k-1}$ tile must bond to $t_k$ as that is the only tile with an $a_{k-1}$ arm. This process continues until $t_1$ bonds to $t_2$, as that is the only tile with an $a_1$ arm. The resulting incomplete complex is size $4k.$ Note that $4k \geq 2k+n$ given the restrictions on $n,$ so this option only realizes graphs of greater order than $Tad_{2k,n}.$ If $t_{k+1}$ bonds to the final $a_k$ arm, then the tile $t_{k+2}$ must bond to the $t_{k+1}$ tile as that is the only tile with an $\hat{a}_{k+1}$ arm. Similarly, a $t_{k+3}$ tile must bond to $t_{k+2}$. This process continues until it reaches tile $t_{n}.$ At this point, the only possibility is for the $t_{n+1}$ tile to bond to $t_n$ as that is the only tile with a $\hat{a}_n$ arm. The resulting complex is size $2k+n$ with no un-bonded arms remaining. In fact, this realizes the target graph $Tad_{2k,n}$.

In $P_{(2k+1,n)}$, to minimize the number of additional tiles, $t_{k+1}$ must bond to one extending $a_k$ while $t_{n+2}$ must bond to another extending $a_k.$ This allows for $t_{n+2}$ and $t_{k+1}$ to bond together via the bond-edge type $a_k$. The other tile attachment possibilities form complexes of greater size than the target graph. There are two possible options for the extending $a_k$ arm ($t_{k+1}$ or $t_{n+2}$). Each option leads to complexes which are analogous to those described for $P_{(2k,n)}.$ The resulting complexes are of size greater than or equal to $2k+n+1$. The complex extended with the $t_{k+1}$ tile realizes the target graph  $Tad_{2k+1,n}$.

Therefore, graphs realized by $P_{(2k,n)}$ or $P_{(2k+1,n)}$ are the target graphs or graphs of strictly greater order than the target graphs. 
\end{proof}

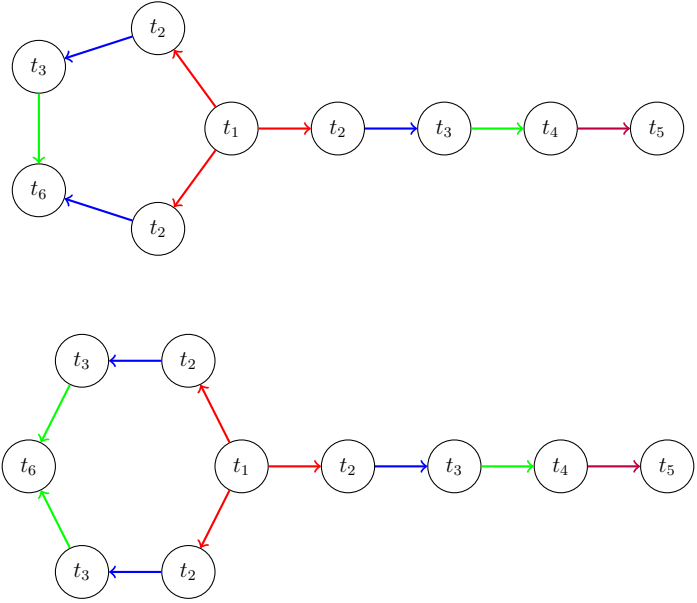
\begin{figure}[h!]
\label{fig: Scen2tad5,4}
     \centering
\begin{tikzpicture}[transform shape, scale = 0.7]
 \node[main node] (i) at (-6,0) {$t_1$};
 \node[main node] (j) at (-7.38
, -1.90) {$t_2$};
 \node[main node](k) at (-9.62
,-1.17) {$t_6$};
 \node[main node] (l) at (-9.62
,1.17) {$t_3$};
 \node[main node] (m) at (-7.38, 1.90
) {$t_2$};
\node[main node] (n) at (-4,0) {$t_2$};
\node[main node] (o) at (-2,0) {$t_3$};
\node[main node] (p) at (0,0) {$t_4$};
\node[main node] (q) at (2,0) {$t_5$};

\path[draw,thick,color=red,->]
(i) edge node []{} (m)
(i) edge node []{} (j)
(i) edge node []{} (n);

\path[draw,thick,color=blue,->]
(m) edge node []{} (l)
(n) edge node []{} (o)
(j) edge node []{} (k);

\path[draw,thick,color=green,->]
(o) edge node []{} (p)
(l) edge node []{} (k);

\path[draw,thick,color=purple,->]
(p) edge node []{} (q);

\end{tikzpicture}

\vspace{1cm}

\begin{tikzpicture}[transform shape, scale = 0.7]
 \node[main node] (a) at (5,0) {$t_1$};
 \node[main node] (b) at (4,-2) {$t_2$};
 \node[main node](c) at (2,-2) {$t_3$};
 \node[main node] (d) at (1,0) {$t_6$};
 \node[main node] (e) at (2,2) {$t_3$};
 \node[main node] (f) at (4,2) {$t_2$};
 \node[main node] (g) at (7,0) {$t_2$};
  \node[main node] (h) at (9,0) {$t_3$};
  \node[main node] (i) at (11,0) {$t_4$};
    \node[main node] (j) at (13,0) {$t_5$};
		 
\path[draw,thick,color=red,->]
(a) edge node []{} (f)
(a) edge node []{} (b)
(a) edge node []{} (g);

\path[draw,thick,color=blue,->]
(f) edge node []{} (e)
(g) edge node []{} (h)
(b) edge node []{} (c);

\path[draw,thick,color=green,->]
(c) edge node []{} (d)
(h) edge node []{} (i)
(e) edge node []{} (d);

\path[draw,thick,color=purple,->]
(i) edge node []{} (j);

\end{tikzpicture}

\caption{Scenario 2 labelings of $Tad_{5,4}$ and $Tad_{6,4}$} 
\label{fig:tadpoleS2mediumpath}
\end{figure}

\begin{remark} For graphs $Tad_{m,n}$ in which $n = \lceil \frac{m}{2} \rceil$, $B_2(Tad_{m,n})$ and $T_2(Tad_{m,n})$ are included in Propositions \ref{prop:B2tadpoleshortpath} and \ref{prop:T2tadpoleshortpath} as well in Proposition \ref{prop:tadpoleS2mediumpath}. For a graph with path length $\lceil \frac{m}{2} \rceil$, the pots given in these propositions are identical. For other path lengths, the pots are not identical, but are still quite similar. Due to different connectivity of tile types, we have chosen to keep these cases as separate propositions. \end{remark}

\begin{proposition}\label{prop:tadpoleS2longpath}
$\lceil \frac{n-m+1}{2}\rceil +m-1 \leq B_2(Tad_{m, n}) \leq n$ and $\lceil \frac{n-m+1}{2}\rceil +m+1 \leq T_2(Tad_{m, n}) \leq n + 1$ for $n > m$.
\end{proposition}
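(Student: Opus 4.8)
The plan is to read both lower bounds off Lemma~\ref{S2RepeatedBondEdge} and both upper bounds off a single explicit pot. First I would apply Lemma~\ref{S2RepeatedBondEdge} with the cycle $C_m$ serving as the $m$ vertices to which the path is attached; since $n>m$ this gives $B_2(Tad_{m,n})\ge \lceil\frac{n-m+1}{2}\rceil+m-1$ at once and shows that at least $\lceil\frac{n-m+1}{2}\rceil+m$ distinct tile types are forced on the path. The bridging vertex has degree $3$, which differs from both degrees occurring in the path (namely $1$ and $2$), so its tile type is necessarily distinct from every tile used on the path; adding it yields $T_2(Tad_{m,n})\ge \lceil\frac{n-m+1}{2}\rceil+m+1$.

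For the upper bounds I would exhibit one pot, valid for both parities of $m$, on bond-edge types $a_1,\dots,a_n$ consisting of the $n+1$ tiles
\[ b=\{a_1^2,\hat a_m\},\quad t_i=\{\hat a_i,a_{i+1}\}\ \ (1\le i\le n-1),\quad t_n=\{\hat a_n\}. \]
The crucial idea, which I expect to be the crux, is that the degree-$3$ bridging vertex can close the cycle on its own: the tiles $t_1,\dots,t_{m-1}$ are reused to label the $m-1$ non-bridging vertices of $C_m$, so the cycle edges carry the distinct types $a_1,\dots,a_m$ and the cycle closes at $b$ through its single $\hat a_m$ arm, while the path leaves $b$ along $a_1,a_2,\dots,a_n$. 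This removes the separate fold tile required by the medium-path pot of Proposition~\ref{prop:tadpoleS2mediumpath} and drops the tile count from $n+2$ to $n+1$; the point is that a path with $n>m$ supplies the type $a_m$, and hence enough distinct types, to label the whole cycle without repetition (which is impossible when $n<m$). Since this pot uses exactly $n$ bond-edge types and $n+1$ tiles, it gives $B_2(Tad_{m,n})\le n$ and $T_2(Tad_{m,n})\le n+1$.

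To finish I would confirm the pot realizes nothing smaller than $Tad_{m,n}$ via its spectrum. The balance equation for $a_1$ gives $r_{t_1}=2r_b$, the chains $a_2,\dots,a_{m-1}$ and $a_{m+1},\dots,a_n$ force $r_{t_1}=\cdots=r_{t_{m-1}}$ and $r_{t_m}=\cdots=r_{t_n}$, and the equation for $a_m$ gives $r_{t_{m-1}}=r_b+r_{t_m}$; together with the normalization these determine a unique solution with $r_b=\frac{1}{m+n}$. Because $r_b$ has reduced denominator $m+n$, the least common denominator of the spectrum is $m+n$, so the smallest realizable complex has order $m+n$ and no smaller graph appears. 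In contrast to the pots with degrees of freedom elsewhere in this section, this verification is immediate; the genuine limitation is the gap between the two bounds. The upper-bound pot uses no repeated bond-edge type inside the path, whereas Lemma~\ref{S2RepeatedBondEdge} permits up to $\lfloor\frac{n-m+1}{2}\rfloor$ types to repeat there, and it is not known whether such a more economical labeling can still exclude every smaller graph, which is why only a range of values is asserted.
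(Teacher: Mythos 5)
Your proposal is correct and follows essentially the same route as the paper's proof: the lower bounds come from Lemma \ref{S2RepeatedBondEdge} plus one extra tile type for the degree-3 bridging vertex, and the upper bounds come from the identical pot (your $b, t_1,\dots,t_n$ is the paper's $t_1,\dots,t_{n+1}$ after re-indexing), verified by computing the spectrum's unique solution with least common denominator $m+n$. Your explicit derivation of that unique solution is slightly more detailed than the paper, which simply states the spectrum, but the argument is the same.
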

\begin{proof}
By Lemma \ref{S2RepeatedBondEdge}, $B_2(Tad_{m,n}) \geq \lceil \frac{n-m+1}{2}\rceil +m-1$ and $T_2(Tad_{m,n}) \geq \lceil \frac{n-m+1}{2}\rceil +m$, with $\lceil \frac{n-m+1}{2}\rceil +m$ distinct tile types required to label the vertices of the path. Note that the degree 3 bridging vertex will require an additional distinct tile type, so $T_2(Tad_{m,n}) \geq \lceil \frac{n-m+1}{2}\rceil +m+1$. The following pot realizes $Tad_{m,n}$ and no smaller graphs, showing $B_2(Tad_{m, n}) \leq n$ and $T_2(Tad_{m, n}) \leq n + 1$. An example labeling of $Tad_{5,7}$ is shown in Figure \ref{fig:tadpoleS2longpath}.

\begin{equation} P = \{t_1=\{a_1^2,\hat{a}_m\}, t_i = \{\hat{a}_{i-1},a_{i}\} \text{ for } 2 \leq i \leq n, t_{n+1}=\{\hat{a}_n\}\} 
\end{equation}

The construction matrix and spectrum of $P$ follows.

\begin{equation*} M(P) = \begin{bmatrix} 
2 & -1 & 0 & \cdots & & & & & \cdots & 0 \\ 
0 & 1 & -1 & 0 & \cdots & & & & \cdots & 0 \\
\vdots & \ddots & \ddots & \ddots & \ddots & & & & & \vdots \\
0 & & 0 & 1 & -1 & 0 & \cdots & & \cdots & 0 \\
-1 & 0 & \cdots & 0 & 1 & -1 & 0 & & \cdots & 0 \\
0 & & & & 0 & 1 & -1 & 0 & \cdots & 0 \\ 
\vdots & & & & & \ddots & \ddots  & \ddots & \ddots &  \vdots \\
0 & \cdots & & & & \cdots & 0 & 1 & -1 & 0  \\
1 & 1 & \cdots & & & & & \cdots & 1 & 1 \end{bmatrix} \end{equation*}

\begin{equation} \mathcal{S}(P) = \left\{ \left \langle r_1 = \frac{1}{m+n}, r_i = \frac{2}{m+n} \text{ for } 2 \leq i \leq m, r_j = \frac{1}{m+n} \text{ for } m+1 \leq j \leq n+1 \right \rangle \right\}
\end{equation}

The least common multiple of the vector component denominators in $\mathcal{S}(P)$ is $m+n$. Therefore, no graphs of smaller order than $Tad_{m,n}$ are realized by $P$.
\end{proof}

\begin{figure}[h!]
     \centering
\begin{tikzpicture}[transform shape, scale = 0.7]
 \node[main node] (i) at (-6,0) {$t_1$};
 \node[main node] (j) at (-7.38
, -1.90) {$t_2$};
 \node[main node](k) at (-9.62
,-1.17) {$t_4$};
 \node[main node] (l) at (-9.62
,1.17) {$t_3$};
 \node[main node] (m) at (-7.38, 1.90
) {$t_5$};
\node[main node] (n) at (-4,0) {$t_2$};
\node[main node] (o) at (-2,0) {$t_3$};
\node[main node] (p) at (0,0) {$t_4$};
\node[main node] (q) at (2,0) {$t_5$};
\node[main node] (r) at (4,0) {$t_6$};
\node[main node] (s) at (6,0) {$t_7$};
\node[main node] (t) at (8,0) {$t_8$};

\path[draw,thick,color=red,->]
(i) edge node []{} (m)
(i) edge node []{} (n);

\path[draw,thick,color=blue,->]
(m) edge node []{} (l)
(n) edge node []{} (o);

\path[draw,thick,color=green,->]
(o) edge node []{} (p)
(l) edge node []{} (k);

\path[draw,thick,color=purple,->]
(k) edge node []{} (j)
(p) edge node []{} (q);

\path[draw,thick,color=teal,->]
(j) edge node []{} (i)
(q) edge node []{} (r);

\path[draw,thick,color=brown,->]
(r) edge node []{} (s);

\path[draw,thick,color=pink,->]
(s) edge node []{} (t);

\end{tikzpicture}

\caption{Scenario 2 labeling of $Tad_{5,7}$} 
\label{fig:tadpoleS2longpath}
\end{figure}
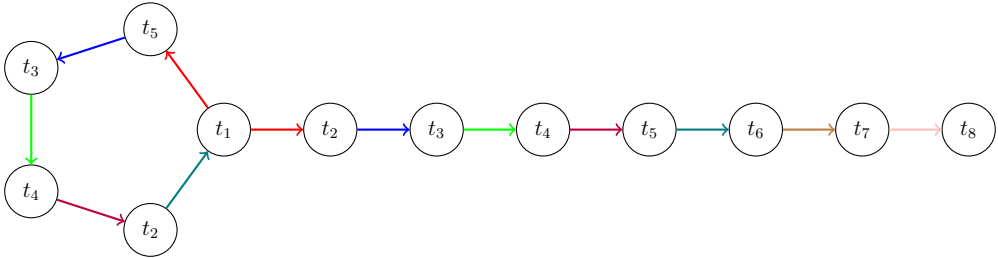

\subsection{Scenario 3}
 
The minimum numbers of tile and bond-edge types required in Scenario 3 for tadpole graphs for which the path contains fewer than half the number of vertices that are in the cycle differ from the Scenario 2 values, while tadpole graphs with longer paths maintain the same values in Scenario 3 as in Scenario 2. Scenario 3 for shorter path tadpole graphs is a nuanced problem. We first provide two lemmas to assist.

\begin{lemma}\label{lemma1:tadpoleS3}
In order to satisfy the requirements of Scenario 3 for $Tad_{m,n}$, if a bond-edge type is used to label both an edge in the cycle and an edge in the path then the number of edges between the bridging vertex and the appearance of the bond-edge type in the cycle must equal the number of edges between the bridging vertex and the appearance of the bond-edge type in the path.
\end{lemma}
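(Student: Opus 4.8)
The plan is to argue by contraposition: assuming a bond-edge type $a$ labels one edge $E_c$ of the cycle and one edge $E_p$ of the path, I will produce a second complete complex, built from exactly the same tiles, and show it is a tadpole graph whose cycle length differs from $m$ precisely when the two distances disagree. Since this complex has order $m+n$ but is not isomorphic to $Tad_{m,n}$, its realizability would violate Scenario 3, forcing the distances to be equal.

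First I would set up notation: let $v_0$ be the bridging vertex, let the cycle be $v_0,v_1,\dots,v_{m-1}$ with edges $v_jv_{j+1}$ taken cyclically, and let the path be $v_0,w_1,\dots,w_n$, so that $E_p$ is the $d_p$-th path edge $w_{d_p-1}w_{d_p}$ (with $w_0=v_0$) and $E_c=v_iv_{i+1}$. By Lemma \ref{S3NoRepeatedBondEdge} the type $a$ occurs at most once along the path, so $E_p$ is unambiguous. The central device is a \emph{swap}: break the bonds $E_c$ and $E_p$, exposing four half-edges (two un-hatted of type $a$, two hatted of type $\hat a$), then re-bond each un-hatted half-edge to the hatted half-edge of the \emph{other} broken edge. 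This re-pairing respects complementarity and reuses exactly the tiles of $Tad_{m,n}$, so it yields another complete complex $G'$ realized by the same pot, necessarily of order $m+n$.

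Next I would trace the structure of $G'$. Deleting $E_c$ opens the cycle into a path, and deleting $E_p$ detaches the far segment $A=\{w_{d_p},\dots,w_n\}$; the swap reattaches $A$ to one endpoint of the opened cycle and closes a new cycle through $v_0$ from one arc of the old cycle together with the near segment $\{w_1,\dots,w_{d_p-1}\}$. A direct count shows $G'$ is again a tadpole $Tad_{m',n'}$ with $m'+n'=m+n$ and new cycle length $m'=m+d_p-1-i$. Because a tadpole is determined up to isomorphism by its cycle length and tail length, $G'\cong Tad_{m,n}$ forces $m'=m$, i.e. $i=d_p-1$; this is exactly the statement that the number of edges from $v_0$ to $E_p$ along the path equals the number of edges from $v_0$ to $E_c$ along the relevant cycle arc. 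Whenever the two distances differ we get $m'\neq m$, so $G'$ (a tadpole with different parameters, or in degenerate cases a cycle or a non-simple graph) fails to be isomorphic to $Tad_{m,n}$, contradicting Scenario 3; this establishes the lemma.

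The hard part will be bookkeeping the orientation of the two $a$-edges, since the arrow directions decide which of the two cycle arcs is absorbed into the new cycle and which becomes part of the new tail: the ``agreeing'' orientation gives the count above, while the ``opposing'' orientation reuses the complementary arc and produces $m'=i+d_p$, so the distance equated with $d_p-1$ must be measured along the arc dictated by the orientation. I would carry out one representative orientation in full and note that the others follow by symmetry (and by globally exchanging hatted and un-hatted labels). I would also separately dispose of the degenerate cases in which $E_c$ or $E_p$ is incident with $v_0$ (so one arc length is $0$) and the small-$m'$ cases where $G'$ is non-simple, all of which remain non-isomorphic to $Tad_{m,n}$ whenever the distances differ.
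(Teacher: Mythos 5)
Your proposal is correct and takes essentially the same approach as the paper's proof: both arguments break the two $a$-labeled bonds and re-bond the exposed half-edges crosswise, producing a new complete complex that is a tadpole whose cycle length shifts by the difference of the two distances, so Scenario 3 forces that difference to be zero. The orientation bookkeeping you flag as the hard part is handled in the paper by measuring the cycle distances separately to the $a$-labeled and $\hat{a}$-labeled half-edges of the cycle edge, which is precisely the resolution your case analysis describes.
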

\begin{proof}
 Let $A_c$ be an edge in the cycle labeled with the bond-edge type and $A_p$ be an edge in the path labeled with the bond-edge type. Without loss of generality, assume the half-edge of $A_p$ closest to the bridging vertex is labeled $a$. Let $x$ be the number of edges between the bridging vertex and the half-edge of $A_c$ labeled with $\hat{a}$, $y$ be the number of edges between the bridging vertex and the half-edge of $A_c$ labeled with $a$, and $l$ be the number of edges from the bridging vertex to $A_p$, as shown in Figure \ref{fig:tadpoleS3shortpathprooflabel}. If the half-edges of $A_c$ and $A_p$ were to re-bond with one another, a cycle of length $x+l+1$ will form, as shown in Figure \ref{fig:tadpoleS3shortpathproof}. If the resulting graph is isomorphic to $Tad_{m,n}$, then $x+l+1=x+y+1$ and $y=l$. 
 
 \begin{figure}[h!]
     \centering
     
     \begin{tikzpicture}[transform shape, scale = 0.7]
 \node[main node, fill=black, minimum size = 0.3cm] (j) at (6,-1.5) {};
 \node[main node, fill=black, minimum size = 0.3cm] (a) at (4,-1.5) {};
 \node[main node, fill=black, minimum size = 0.3cm] (b) at (4,1.5) {};
 \node[main node, fill=black, minimum size = 0.3cm] (c) at (6,1.5) {};
 \node[main node, fill=black, minimum size = 0.3cm] (d) at (8,0) {};
 \node[main node, fill=black, minimum size = 0.3cm] (e) at (10,0) {};
 \node[main node, fill=black, minimum size = 0.3cm] (f) at (12,0) {};
 \node[main node, fill=black, minimum size = 0.3cm] (g) at (14,0) {};
\path[draw]
(e) edge node [above]{$a$ \hspace{2mm} $\hat{a}$} (f)
(a) edge node [left, rotate=90, yshift=0.3cm, xshift=0.7cm]{ $\hat{a}$ \hspace{6mm} $a$} (b);

\path[draw, dashed]
(c) edge node []{} (d)
(j) edge node []{} (d)
(a) edge node []{} (j)
(b) edge node []{} (c)
(d) edge node []{} (e)
(f) edge node []{} (g);

\draw [decorate,decoration={brace,amplitude=10pt},xshift=0pt,yshift=0pt]
(4,2.5) -- (8,1.5) node [black,midway,xshift=0, yshift=1cm] 
{$y$ edges};

\draw [decorate,decoration={brace,amplitude=10pt},xshift=0pt,yshift=0pt]
(8,-1.5) -- (4,-2.5) node [black,midway,xshift=0, yshift=-1cm] 
{$x$ edges};

\draw [decorate,decoration={brace,amplitude=10pt},xshift=0pt,yshift=0pt]
(8,0.5) -- (10,0.5) node [black,midway,xshift=0, yshift=1cm] 
{$k$ edges};

\draw [decorate,decoration={brace,amplitude=10pt},xshift=0pt,yshift=0pt]
(12,0.5) -- (14,0.5) node [black,midway,xshift=0, yshift=1cm] 
{$l$ edges};

\draw []
(4,-1.5) -- (4,1.5) node [black,midway,rotate=90,xshift=0cm, yshift=1cm] 
{$A_c$};

\draw [white]
(10,0.5) -- (12,0.5) node [black,midway,xshift=0cm, yshift=0.7cm] 
{$A_p$};

\end{tikzpicture}

\caption{$Tad_{m,n}$ labeling in proof of Lemma \ref{lemma1:tadpoleS3}}
\label{fig:tadpoleS3shortpathprooflabel}

\end{figure}
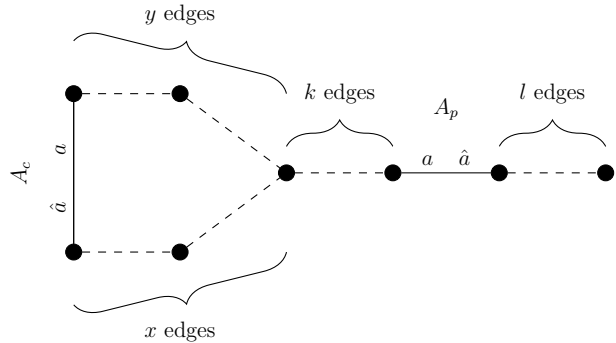

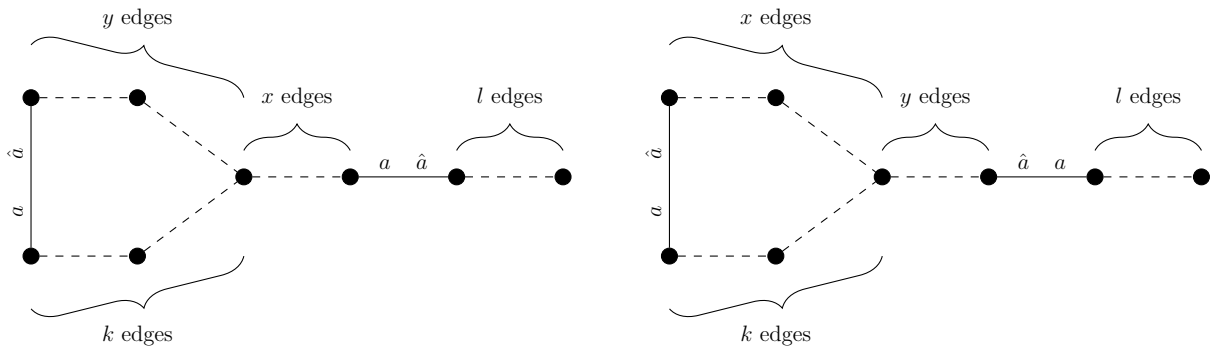
\begin{figure}
  \begin{tikzpicture}[transform shape, scale = 0.7]
 \node[main node, fill=black, minimum size = 0.3cm] (j) at (-9,-1.5) {};
 \node[main node, fill=black, minimum size = 0.3cm] (a) at (-11,-1.5) {};
 \node[main node, fill=black, minimum size = 0.3cm] (b) at (-11,1.5) {};
 \node[main node, fill=black, minimum size = 0.3cm] (c) at (-9,1.5) {};
 \node[main node, fill=black, minimum size = 0.3cm] (d) at (-7,0) {};
 \node[main node, fill=black, minimum size = 0.3cm] (e) at (-5,0) {};
 \node[main node, fill=black, minimum size = 0.3cm] (f) at (-3,0) {};
 \node[main node, fill=black, minimum size = 0.3cm] (g) at (-1,0) {};

\path[draw]
(e) edge node [above]{$a$ \hspace{2mm} $\hat{a}$} (f)
(a) edge node [left, rotate=90, yshift=0.3cm, xshift=0.7cm]{ $a$ \hspace{6mm} $\hat{a}$} (b);

\path[draw, dashed]
(c) edge node []{} (d)
(j) edge node []{} (d)
(a) edge node []{} (j)
(b) edge node []{} (c)
(d) edge node []{} (e)
(f) edge node []{} (g);

\draw [decorate,decoration={brace,amplitude=10pt},xshift=0pt,yshift=0pt]
(-11,2.5) -- (-7,1.5) node [black,midway,xshift=0, yshift=1cm] 
{$y$ edges};

\draw [decorate,decoration={brace,amplitude=10pt},xshift=0pt,yshift=0pt]
(-7,-1.5) -- (-11,-2.5) node [black,midway,xshift=0, yshift=-1cm] 
{$k$ edges};

\draw [decorate,decoration={brace,amplitude=10pt},xshift=0pt,yshift=0pt]
(-7,0.5) -- (-5,0.5) node [black,midway,xshift=0, yshift=1cm] 
{$x$ edges};

\draw [decorate,decoration={brace,amplitude=10pt},xshift=0pt,yshift=0pt]
(-3,0.5) -- (-1,0.5) node [black,midway,xshift=0, yshift=1cm] 
{$l$ edges};

 \node[main node, fill=black, minimum size = 0.3cm] (j) at (3,-1.5) {};
 \node[main node, fill=black, minimum size = 0.3cm] (a) at (1,-1.5) {};
 \node[main node, fill=black, minimum size = 0.3cm] (b) at (1,1.5) {};
 \node[main node, fill=black, minimum size = 0.3cm] (c) at (3,1.5) {};
 \node[main node, fill=black, minimum size = 0.3cm] (d) at (5,0) {};
 \node[main node, fill=black, minimum size = 0.3cm] (e) at (7,0) {};
 \node[main node, fill=black, minimum size = 0.3cm] (f) at (9,0) {};
 \node[main node, fill=black, minimum size = 0.3cm] (g) at (11,0) {};

\path[draw]
(e) edge node [above]{$\hat{a}$ \hspace{2mm} $a$} (f)
(a) edge node [left, rotate=90, yshift=0.3cm, xshift=0.7cm]{ $a$ \hspace{6mm} $\hat{a}$} (b);

\path[draw, dashed]
(c) edge node []{} (d)
(j) edge node []{} (d)
(a) edge node []{} (j)
(b) edge node []{} (c)
(d) edge node []{} (e)
(f) edge node []{} (g);

\draw [decorate,decoration={brace,amplitude=10pt},xshift=0pt,yshift=0pt]
(1,2.5) -- (5,1.5) node [black,midway,xshift=0, yshift=1cm] 
{$x$ edges};

\draw [decorate,decoration={brace,amplitude=10pt},xshift=0pt,yshift=0pt]
(5,-1.5) -- (1,-2.5) node [black,midway,xshift=0, yshift=-1cm] 
{$k$ edges};

\draw [decorate,decoration={brace,amplitude=10pt},xshift=0pt,yshift=0pt]
(5,0.5) -- (7,0.5) node [black,midway,xshift=0, yshift=1cm] 
{$y$ edges};

\draw [decorate,decoration={brace,amplitude=10pt},xshift=0pt,yshift=0pt]
(9,0.5) -- (11,0.5) node [black,midway,xshift=0, yshift=1cm] 
{$l$ edges};

\end{tikzpicture}

\caption{Non-isomorphic graphs $G_1$ (left) and $G_2$ (right) realized in proof of Lemma \ref{lemma1:tadpoleS3}}
\label{fig:tadpoleS3shortpathproof}
\end{figure}

\end{proof}

\begin{lemma}\label{lemma2:tadpoleS3}
In order to satisfy the requirements of Scenario 3 for $Tad_{m,n}$ when $n < \lceil \frac{m}{2} \rceil$, a bond-edge type used to label two edges in the cycle cannot also be used to label an edge in the path.
\end{lemma}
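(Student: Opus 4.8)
The plan is to argue by contradiction: suppose some bond-edge type $a$ labels two edges of the cycle as well as one edge of the path, yet the pot still satisfies Scenario 3, and then produce a forbidden complex. First I would pin down the geometry of the three $a$-labeled edges. By the cycle argument used in the proof of Proposition \ref{prop:B2tadpoleshortpath} (Proposition 8 of \cite{mintiles}), the two cycle edges carrying type $a$ must have opposite orientation, since two equally oriented copies would split off a strictly smaller cycle. Next I would invoke Lemma \ref{lemma1:tadpoleS3}: applying it separately to each cycle edge together with the path edge forces, for Scenario 3, the distance from the bridging vertex to the un-hatted half-edge of each cycle occurrence to equal the distance to the path occurrence. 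Writing $A_p$ for the $t$-th edge of the path, so that its near half-edge sits $t-1$ edges from the bridging vertex, this locates the two cycle edges symmetrically about the bridging vertex at arc-distance $t-1$ on either side; if the positions failed to be symmetric, Lemma \ref{lemma1:tadpoleS3} would already exhibit a same-order non-isomorphic graph and we would be finished.

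The heart of the argument is then a construction. Cutting the three $a$-bonds leaves a ``spider'' centered at the bridging vertex with three arms, two running along the cycle and one along the path, each of length $t-1$ (by the forced symmetry, and using that the opposite orientations make one cycle arm end in $a$ while the other does as well) and each terminating in a free un-hatted $a$ half-edge. It also leaves the terminal segment $\varphi$ of the path, running from the far endpoint of $A_p$ to the degree-one vertex: this is a dead-end chain of $n-t+1$ vertices whose only free half-edge is a hatted $\hat a$. I would then cap each of the three spider arms with its own copy of $\varphi$, which is legitimate since the pot may reuse tile types. Every arm thereby becomes a path of $(t-1)+1+(n-t)=n$ edges, and the resulting complete complex is a tree with a single degree-three vertex and three legs, hence acyclic, of total order $1+3n$.

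Finally I would compare orders. Since $n < \lceil m/2 \rceil$ gives $2n \le m-1$, we have $1 + 3n = 1 + n + 2n \le m + n$, so the spider tree is a complete complex realized by the pot of order no larger than the target; as a tree it cannot be isomorphic to $Tad_{m,n}$, which contains the cycle $C_m$. Thus the pot realizes either a strictly smaller complex or a same-order non-isomorphic one, contradicting Scenario 3. I expect the main obstacle to be the bookkeeping in the middle step: verifying that the forced symmetric placement really makes all three arms the same length ending in $a$, and that replacing the single terminal segment by three copies yields a genuinely complete complex (no leftover half-edges) of order exactly $1+3n$, so that the hypothesis $n < \lceil m/2 \rceil$ is precisely what drives this order down to at most $m+n$ and makes the violation go through.
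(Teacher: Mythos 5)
Your proposal is correct and follows essentially the same route as the paper's proof: opposite orientations of the two cycle occurrences, equal distances forced by Lemma \ref{lemma1:tadpoleS3}, and then the construction of the three-legged ``spider'' tree of order $3n+1$ by attaching copies of the terminal path segment to the three free $a$ half-edges, which has order at most $m+n$ precisely because $n < \lceil \frac{m}{2} \rceil$, violating Scenario 3. The bookkeeping you flagged (all three arms ending in un-hatted $a$, and completeness of the resulting complex of order exactly $3n+1$) does check out, so no gap remains.
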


\begin{proof}
Let $B_{c_1}$ and $B_{c_2}$ be two edges in the cycle labeled with bond-edge type $a$, and $B_p$ be an edge in the path labeled with bond-edge type $a$. As described in the proof of Proposition \ref{prop:LollipopOddB2}, edges $B_{c_1}$ and $B_{c_2}$ must have opposite orientations in order to prevent realization of a smaller graph. Furthermore, by the argument given in the proof of Lemma \ref{lemma1:tadpoleS3}, edges $B_{c_1}$ and $B_{c_2}$ must both be the same number of edges away from the bridging vertex. If the number of edges is $k$, then edge $B_p$ must also be $k$ edges away from the bridging vertex, as shown in Figure \ref{fig:tadpoleS3shortpathlemma2}. However, in this case, the tiles of the last $n-k$ vertices in the path can bond to the complementary half-edges of $B_{c_1}$ and $B_{c_2}$ (see Figure \ref{fig:tadpoleS3shortpathlemma2nonisom}). This realizes a graph of order $3n+1$ with a central vertex and 3 extending paths. If $n < \lceil \frac{m}{2} \rceil$, $3n+1 < m+n+1$, so a smaller graph or non-isomorphic graph of equal order is realized. Note if $n \geq \lceil \frac{m}{2} \rceil$, $3n+1 \geq m+n+1$, so a graph of order greater than $m+n$ is realized. 
\end{proof}

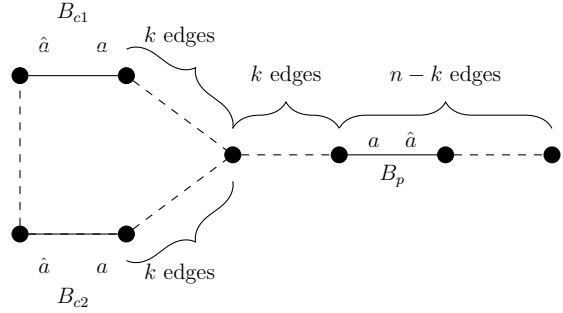
\begin{figure}[h]
\centering
  \begin{tikzpicture}[transform shape, scale = 0.7]
  
 \node[main node, fill=black, minimum size = 0.3cm] (j) at (-9,-1.5) {};
 \node[main node, fill=black, minimum size = 0.3cm] (a) at (-11,-1.5) {};
 \node[main node, fill=black, minimum size = 0.3cm] (b) at (-11,1.5) {};
 \node[main node, fill=black, minimum size = 0.3cm] (c) at (-9,1.5) {};
 \node[main node, fill=black, minimum size = 0.3cm] (d) at (-7,0) {};
 \node[main node, fill=black, minimum size = 0.3cm] (e) at (-5,0) {};
 \node[main node, fill=black, minimum size = 0.3cm] (f) at (-3,0) {};
 \node[main node, fill=black, minimum size = 0.3cm] (g) at (-1,0) {};

\path[draw]
(e) edge node [above]{$a$ \hspace{2mm} $\hat{a}$} (f)
(j) edge node [below, yshift=-0.3cm]{ $\hat{a}$ \hspace{6mm} $a$} (a)
(b) edge node [above, yshift=0.3cm]{ $\hat{a}$ \hspace{6mm} $a$} (c);

\path[draw, dashed]
(c) edge node []{} (d)
(j) edge node []{} (d)
(a) edge node []{} (j)
(d) edge node []{} (e)
(a) edge node []{} (b)
(f) edge node []{} (g);

\draw [decorate,decoration={brace,amplitude=10pt},xshift=0pt,yshift=0pt]
(-9,2) -- (-7,0.5) node [black,midway,xshift=0, yshift=1cm] 
{$k$ edges};

\draw [decorate,decoration={brace,amplitude=10pt},xshift=0pt,yshift=0pt]
(-7,-0.5) -- (-9,-2) node [black,midway,xshift=0, yshift=-1cm] 
{$k$ edges};

\draw [decorate,decoration={brace,amplitude=10pt},xshift=0pt,yshift=0pt]
(-7,0.5) -- (-5,0.5) node [black,midway,xshift=0, yshift=1cm] 
{$k$ edges};

\draw [decorate,decoration={brace,amplitude=10pt},xshift=0pt,yshift=0pt]
(-5,0.5) -- (-1,0.5) node [black,midway,xshift=0, yshift=1cm] 
{$n-k$ edges};

\draw [white]
(-11,2) -- (-9,2) node [black,midway,xshift=0cm, yshift=0.7cm] 
{$B_{c1}$};

\draw [white]
(-11,-2) -- (-9,-2) node [black,midway,xshift=0cm, yshift=-0.7cm] 
{$B_{c2}$};

\draw [white]
(-5,0.5) -- (-3,0.5) node [black,midway,xshift=0cm, yshift=-0.9cm] 
{$B_p$};

\end{tikzpicture} 

\caption{$Tad_{m,n}$ labeling in proof of Lemma \ref{lemma2:tadpoleS3}}
\label{fig:tadpoleS3shortpathlemma2}
\end{figure}

\begin{figure}[h]
    \centering
\begin{tikzpicture}[transform shape, scale = 0.7]
\centering

 \node[main node, fill=black, minimum size = 0.3cm] (h) at (3,-1.5) {};
 \node[main node, fill=black, minimum size = 0.3cm] (a) at (1,-1.5) {};
  \node[main node, fill=black, minimum size = 0.3cm] (i) at (-1,-1.5) {};
 \node[main node, fill=black, minimum size = 0.3cm] (b) at (1,1.5) {};
  \node[main node, fill=black, minimum size = 0.3cm] (j) at (-1,1.5) {};
 \node[main node, fill=black, minimum size = 0.3cm] (c) at (3,1.5) {};
 \node[main node, fill=black, minimum size = 0.3cm] (d) at (5,0) {};
 \node[main node, fill=black, minimum size = 0.3cm] (e) at (7,0) {};
 \node[main node, fill=black, minimum size = 0.3cm] (f) at (9,0) {};
 \node[main node, fill=black, minimum size = 0.3cm] (g) at (11,0) {};

\path[draw]
(h) edge node [below]{$\hat{a}$ \hspace{2mm} $a$} (a)
(c) edge node [above]{$\hat{a}$ \hspace{2mm} $a$} (b)
(e) edge node [above]{$a$ \hspace{2mm} $\hat{a}$} (f);

\path[draw, dashed]
(c) edge node []{} (d)
(h) edge node []{} (d)
(d) edge node []{} (e)
(a) edge node []{} (i)
(b) edge node []{} (j)
(f) edge node []{} (g);

\draw [decorate,decoration={brace,amplitude=10pt},xshift=0pt,yshift=0pt]
(3,2) -- (5,0.5) node [black,midway,xshift=0, yshift=1cm] 
{$k$ edges};

\draw [decorate,decoration={brace,amplitude=10pt},xshift=0pt,yshift=0pt]
(5,-0.5) -- (3,-2) node [black,midway,xshift=0, yshift=-1cm] 
{$k$ edges};

\draw [decorate,decoration={brace,amplitude=10pt},xshift=0pt,yshift=0pt]
(5,0.5) -- (7,0.5) node [black,midway,xshift=0, yshift=1cm] 
{$k$ edges};

\draw [decorate,decoration={brace,amplitude=10pt},xshift=0pt,yshift=0pt]
(7,0.5) -- (11,0.5) node [black,midway,xshift=0, yshift=1cm] 
{$n-k$ edges};

\draw [decorate,decoration={brace,amplitude=10pt},xshift=0pt,yshift=0pt]
(-1,2) -- (3,2) node [black,midway,xshift=0, yshift=1cm] 
{$n-k$ edges};

\draw [decorate,decoration={brace,amplitude=10pt},xshift=0pt,yshift=0pt]
(3,-2) -- (-1,-2) node [black,midway,xshift=0, yshift=-1cm] 
{$n-k$ edges};

\end{tikzpicture}
\caption{Non-isomorphic graph realized in proof of Lemma \ref{lemma2:tadpoleS3}}
\label{fig:tadpoleS3shortpathlemma2nonisom}
\end{figure}
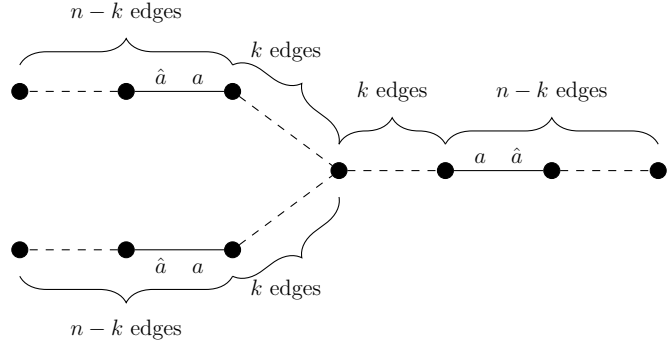

The following result again showcases the difficulties that arise in determining whether a smaller or non-isomorphic graph can be realized by a pot $P$ in which $\mathcal{S}(P)$ has one or more degrees of freedom.

\begin{proposition}
\label{prop:B3tadpoleshortpath}
$B_3(Tad_{m, n}) = \lfloor\frac{m}{2}\rfloor + n$ for $n <\lceil \frac{m}{2} \rceil$.
\end{proposition}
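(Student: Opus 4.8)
The plan is to prove the two inequalities separately, viewing $B_3(Tad_{m,n})$ as a count of distinct bond-edge types split between the cycle $C_m$ and the appended path. For the lower bound I would first apply Lemma \ref{S3NoRepeatedBondEdge}, so that the $n$ path edges carry $n$ pairwise distinct bond-edge types. Write $t$ for the number of types appearing on \emph{both} a cycle edge and a path edge. By Lemma \ref{lemma2:tadpoleS3} each such shared type is used exactly once in the cycle (and, being a path type, exactly once in the path), so it accounts for one cycle edge; the remaining $m-t$ cycle edges must be labeled by types not occurring in the path, each usable at most twice (three uses force two equally-oriented edges and hence a smaller cycle, as in Proposition \ref{prop:B2tadpoleshortpath}). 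Since the cycle-only types are disjoint from the $n$ path types, this yields
\begin{equation*}
B_3(Tad_{m,n}) \;\geq\; \left\lceil \tfrac{m-t}{2} \right\rceil + n .
\end{equation*}
It then remains to bound $t$: I would show $t\le 1$ for even $m$ and $t\le 2$ for odd $m$, whence $\lceil (m-t)/2\rceil \ge \lfloor m/2\rfloor$ and the bound $B_3 \ge \lfloor m/2\rfloor + n$ follows by a short parity check.

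Bounding $t$ is the heart of the matter and the step I expect to be the main obstacle. A single shared type is harmless: Lemma \ref{lemma1:tadpoleS3} merely pins it to the cycle edge at the matching distance from the bridging vertex, and the swap it permits returns an isomorphic graph. The trouble is that \emph{several} shared types supply enough complementary half-edges to re-pair the tiles into a different complete complex. The plan is to extend the swap arguments of Lemmas \ref{lemma1:tadpoleS3} and \ref{lemma2:tadpoleS3} to two (respectively three) simultaneously shared types and to read off, from the forced positions, a recombination that closes a strictly shorter cycle while reattaching the tail — producing a smaller tadpole or a non-isomorphic graph of the same order, contradicting Scenario 3. Concretely, two shared types positioned near the bridging vertex force that vertex to carry a repeated arm, which can then bond to two copies of the common path/cycle tile and short-circuit the cycle; carrying this out in general, tracking orientations and the two arcs on either side of the bridging vertex, is the delicate case analysis.

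For the upper bound I would exhibit explicit pots attaining $\lfloor m/2\rfloor + n$. When $m=2k$ is even, label the cycle with $k$ types each used twice with opposite orientation and the path with $n$ fresh types, sharing none. When $m=2k+1$ is odd, label the cycle with $k$ twice-used types and one singly-used type, and reuse that singly-used type on the path edge occupying the position dictated by Lemma \ref{lemma1:tadpoleS3}, so that exactly one type is shared and the path needs only $n-1$ further fresh types; the total is $k+n=\lfloor m/2\rfloor + n$ in both cases.

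Verifying that these pots realize \emph{only} $Tad_{m,n}$ in Scenario 3 is the second substantive task, and is the difficulty flagged before the statement. Because the twice-used cycle types (and, for odd $m$, the shared type) introduce dependencies, $\mathcal{S}(P)$ will have one or more degrees of freedom, so the least-common-denominator test alone cannot exclude smaller complexes. I would argue combinatorially instead, in the style of Propositions \ref{prop:LollipopOddB2} and \ref{prop:tadpoleS2mediumpath}: set the free variable(s) to an extreme value to show the degree-$3$ bridging tile must appear in any complex, then follow the forced chain of bondings outward around the cycle and along the path to conclude that every complete complex has order at least $m+n$. To rule out a non-isomorphic graph of order exactly $m+n$, I would observe that the only admissible re-pairing is the single position-preserving swap sanctioned by Lemma \ref{lemma1:tadpoleS3}, which returns a graph isomorphic to $Tad_{m,n}$.
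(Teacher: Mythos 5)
Your lower-bound decomposition --- $n$ distinct path types by Lemma \ref{S3NoRepeatedBondEdge}, $t$ shared types each used exactly once in the cycle by Lemma \ref{lemma2:tadpoleS3}, and at least $\lceil (m-t)/2\rceil$ cycle-only types, giving $B_3(Tad_{m,n}) \geq \lceil (m-t)/2\rceil + n$ --- is sound, and it is in fact more careful than the paper's own proof, which never introduces such a parameter: the paper only compares the two extreme strategies (all cycle types doubled and none shared, versus none doubled and $n$ shared) and ignores mixed labelings entirely. The genuine gap is exactly the step you flagged: the bounds $t \leq 1$ for even $m$ and $t \leq 2$ for odd $m$ are not just delicate, they are false, so this step cannot be carried out. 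Take $Tad_{6,2}$ (so $n = 2 < \lceil m/2\rceil = 3$) and the pot $P = \{t_1 = \{a^2,c\},\, t_2 = \{\hat{a},b\},\, t_3 = \{\hat{b},c\},\, t_4 = \{\hat{c},d\},\, t_5 = \{\hat{d}^2\},\, t_6 = \{\hat{b}\}\}$, realizing $Tad_{6,2}$ with the cycle labeled in order by $t_1,t_2,t_3,t_4,t_5,t_4$ and the pendant path by $t_2,t_6$. Here $a$ and $b$ are each used once in the cycle and once in the path at matching distances $0$ and $1$ from the bridging vertex (so every swap permitted by Lemma \ref{lemma1:tadpoleS3} returns an isomorphic graph), while $c$ and $d$ are doubled in the cycle with opposite orientations; thus $t=2$ with $m$ even, and only four bond-edge types appear, below the claimed value $\lfloor 6/2\rfloor + 2 = 5$. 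This pot satisfies Scenario 3: writing $n_i$ for the number of copies of $t_i$ in a complete complex, the conservation equations are $2n_1 = n_2$, $n_2 = n_3+n_6$, $n_1+n_3 = n_4$, $n_4 = 2n_5$, so every component contains $t_1$; $n_1 = 1$ forces the multiset $(1,2,1,2,1,1)$ of order $8$, while $n_1 \geq 2$ forces order at least $13$; and at order $8$ each bond-edge type admits essentially one matching of its arms, every resulting complex being $Tad_{6,2}$. Your proposed short-circuit argument cannot defeat this example: the repeated $a$-arms at the bridge do bond to two copies of $t_2$, but one branch must terminate in $t_6$ and the other must close the cycle through $c$ and $d$, so no shorter cycle can form.

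The upper-bound half of your plan is fine and is essentially the paper's: your even-$m$ and odd-$m$ pots are (\ref{eq:S3tadpoleshortevenpot}) and (\ref{eq:S3tadpoleshortoddpot}) (note the singly-used cycle type must sit within distance $n-1$ of the bridging vertex, as the paper arranges by placing it at distance $0$; at the antipodal edge there would be no legal sharing position on a short path), and your fallback to a combinatorial bonding analysis when the spectrum has a degree of freedom is exactly how the paper handles odd $m$. But that yields only $B_3(Tad_{m,n}) \leq \lfloor m/2\rfloor + n$. Since the lower bound cannot be completed, the stated equality itself fails: the mixed pot above gives $B_3(Tad_{6,2}) \leq 4$, and the same pattern (shared types at matching distances near the bridge, the remaining cycle edges doubled) appears to beat $\lfloor m/2\rfloor + n$ whenever $n \geq 2$ for even $m$ (and $n \geq 3$ for odd $m$). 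So the issue is not a missing lemma in your write-up: your decomposition, pushed through honestly, refutes the proposition rather than proving it, and in doing so exposes that the paper's proof silently skips precisely the mixed case your parameter $t$ was designed to capture.
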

\begin{proof}
Proposition 13 in \cite{mintiles} shows $B_3(C_m) \geq \lceil\frac{m}{2}\rceil$. This result applies to the cycle of $Tad_{m,n}$, as a graph non-isomorphic to $Tad_{m,n}$ can be realized if fewer bond-edge types are used in the labeling of the cycle of $Tad_{m,n}$. Thus, at least $\lceil\frac{m}{2}\rceil$ bond-edge types are needed to label the cycle of $Tad_{m,n}$.  \par
By Lemmas \ref{lemma1:tadpoleS3} and \ref{lemma2:tadpoleS3}, we have two options for repeating a bond-edge type in the labeling of $Tad_{m,n}$ in Scenario 3: (1) bond-edge types may be repeated twice in the cycle but any such bond-edge type may not be repeated again in the path, (2) a bond-edge type used once in the cycle may be used to label one edge in the path at exactly the correct number of edges away from the bridging vertex. Note that when $m$ is odd, at least one bond-edge type must used to label only one edge in the cycle. Therefore, this bond-edge type can be repeated in the path at the distance specified in Lemma \ref{lemma1:tadpoleS3}. Case (1), when $m$ is even, corresponds to $\frac{m}{2}$ bond-edge types each used to label two edges in the cycle, and $n$ new bond-edge type for edges in the path. This results in $\frac{m}{2} + n$ total bond-edge types. Case (1), when $m$ is odd, corresponds to $\left\lfloor \frac{m}{2} \right\rfloor$ bond-edge types each used to label two edges in the cycle, for a total of $\left\lceil \frac{m}{2} \right\rceil$ bond-edge types used to label edges in the cycle and $n-1$ new bond-edge type for edges in the path. This results in $\left\lceil\frac{m}{2}\right\rceil + n-1 = \left\lfloor \frac{m}{2} \right\rfloor + n$ total bond-edge types. Case (2) corresponds to $m$ bond-edge types each used to label one edge in the cycle and $n$ of those bond-edge types also used to label the edges in the path subgraph. Note $\left\lfloor \frac{m}{2} \right\rfloor + n < m$ since $n < \left \lceil \frac{m}{2} \right\rceil$, so case (1) represents the minimum number of bond-edge types for $m$ even and odd. \par
We have concluded $B_3(Tad_{m,n}) \geq \left \lfloor \frac{m}{2} \right\rfloor +n$. The lower bound is achieved by the following pots for $m$ even and odd. Example labelings of $Tad_{5,2}$ and $Tad_{6,2}$ are shown in Figure \ref{fig:tadpoleS3shortpathnew}. 

\begin{equation} \label{eq:S3tadpoleshortevenpot} \begin{split} P_{(2k,n)} = \{t_1 = \{a_1^{2}, a_{k+1}\}, t_i = \{\hat{a}_{i-1}, a_i\} \text{ for } 2 \leq i \leq k, t_{k+1} = \{\hat{a}^2_{k}\},\\ t_l = \{\hat{a}_{l-1}, a_l\} \text{ for } l = k+2 \leq l \leq k+n, t_{k+n+1} = \{\hat{a}_{k+n}\}\} \end{split} \end{equation} 

\begin{equation}\label{eq:S3tadpoleshortoddpot} \begin{split} P_{(2k+1,n)} = \{t_1 = \{a_1^{2}, a_2\}, t_i = \{\hat{a}_{i-1}, a_i\} \text{ for } 2 \leq i \leq k+1, \\ t_{k+2} = \{\hat{a}_{k+1}^2\}, t_{k+3} = \{\hat{a}_1, a_{k+2}\},\\ t_l = \{\hat{a}_{l-2}, a_{l-1}\} \text{ for } k+4 \leq l \leq k+n+1, t_{k+n+2} = \{\hat{a}_{k+n}\}\} \end{split} \end{equation} 

The construction matrices and spectrums of $P_{(2k,n)}$ and $P_{(2k+1,n)}$ follow.

\begin{equation*}  M(P_{(2k,n)}) =
\begin{array}{cc} { \small \begin{array}{r} 1 \\ 2 \\ \vdots \\ \vdots\\ k \\ k+1\\ \vdots  \\ \\ k + n -1 \\ k + n \end{array} } & \begin{bmatrix}
2 & -1 & 0 & \cdots & & & & & & \cdots & 0  \\ 
0 & 1  & -1 & 0 & \cdots & & & & & \cdots & 0 \\ 
\vdots & \ddots & \ddots & \ddots & \ddots & & & & & & \vdots \\
 & & 0 & 1  & -1 & 0 & \cdots & & & &  \\
0 & \cdots & & 0 & 1  & -2 & 0 & \cdots & & & \\
1 & 0 & \cdots &  & 0 & 0  & -1 & 0 & \cdots & & \\ 
0 & \cdots & & & & 0 & 1 & \ddots & \ddots & & \vdots \\
\vdots & & &  & &  & \ddots & \ddots & -1  & 0 & 0 \\
0 & \cdots & & & & & \cdots & 0 & 1 & -1 & 0 \\
1 & \cdots & & & & & \cdots & 1 & 1 & 1 & 1 \end{bmatrix} \end{array} \end{equation*}

\begin{equation*}  M(P_{(2k+1,n)}) =
\begin{array}{cc} { \small \begin{array}{r} 1 \\ 2 \\ \vdots  \\  \\ \vdots \\ k+1 \\ k+2 \\ \vdots  \\ \\ k+n \\ \end{array} } & \begin{bmatrix}
2 & -1 & 0 & \cdots & \cdots & 0 & -1 & 0 & \cdots & & \cdots & 0 \\ 
1 & 1 & -1 & 0 & \cdots &  &  &  &  & & \cdots & 0 \\ 
0 & 0 & 1 & -1 & 0 & \cdots &  &  &  & & \cdots & 0 \\ 
\vdots & \vdots & \ddots & \ddots & \ddots & \ddots &  & & & & & \vdots \\
 & & & \ddots & 1 & -1 & 0 & \cdots & & &  &  \\
 & & & & 0  & 1 & -2 & 0 & \cdots & &  &  \\
 & & & & & 0 & 1 & -1 & 0 & \cdots &  & \\
 & & & & &  & 0 & 1 & -1 & 0 &   & \\
\vdots & \vdots &  & & & & & \ddots & \ddots & \ddots  & \ddots & \vdots \\

0 & 0 & \cdots & &   &  & & \cdots & 0 & 1 & -1  & 0  \\

1 &  1 & \cdots &  &  &  &  & & \cdots & 1 & 1 & 1  \end{bmatrix} \end{array} \end{equation*}

\begin{equation*} \begin{split} \mathcal{S}(P_{(2k,n)})=\left\{\left\langle r_1 = \frac{1}{2k+n}, r_i = \frac{2}{2k+n} \text{ for } i=2, ..., k, r_{l} = \frac{1}{2k+n} \right.\right. \\ \left. \left. \text{ for } l=\frac{2k}{2}+1,...,k+n+1 \right\rangle \right\} \end{split} \end{equation*} 

\begin{equation*} \begin{split} \mathcal{S}(P_{(2k+1,n)})=\left\{\left\langle r_1 = \frac{2}{3(2k+1)} - \frac{2n-2k-1}{3(2k+1)}r_{k+n+2}, r_2 = \frac{4}{3(2k+1)} - \frac{4n+2k+1}{3(2k+1)}r_{k+n+2}, \right. \right. \\ r_i = \frac{2}{2k+1}-\frac{2n}{2k+1}r_{k+n+2} \text{ for } 3 \leq i \leq k+1,  r_{k+n+2} = \frac{1}{2k+1}-\frac{n}{2k+1}r_{k+n+2}, \\ \left. \left.
 r_{l} = r_{k+n+2} \text{ for } \left\lceil\frac{m}{2}\right\rceil+2 \leq l \leq \left\lceil\frac{2k+1}{2}\right\rceil+n,  r_{k+n+2} \right\rangle \mid r_{k+n+2} \in \mathbb{Q}^+ \right\} \end{split} \end{equation*} 

 In $\mathcal{S}(P_{(2k,n)})$ the least common multiple of the vector component denominators in $\mathcal{S}(P_{(2k+1,n
 )})$ and $\mathcal{S}(P_{(2k,n)})$ is at least $2k+n$. Therefore, no graphs of smaller order are realized by $P_{(2k,n)}$.\par

Note that $\mathcal{S}(P_{(2k+1,n)})$ has one degree of freedom and no clear least common multiple of vector entry denominators in the spectrum, making it more difficult to show that non-existence of a solution constructing a graph smaller than $
Tad_{2k+1,n}$. \par 
In $\mathcal{S}(P_{(2k+1,n)})$, if $r_1=0$, then $r_{k+n+2} >1$, which an invalid tile proportion. Therefore, $t_1$ must be included in any complete complex. The $a_2$ arm of $t_1$ can only bond to $t_3$, and the $a_3$ arm of $t_3$ can only bond to $t_4$. This process continues until $t_{k+2}$ is bonded to the complex. In turn, this tile must bond to $t_{k+1}$, which must bond to $t_{k}$. This process continues until $t_3$ is again bonded to the complex. The complex is now of size $2k$. Tile $t_3$ can bond to tiles $t_1$ or $t_2$ via the arm $\hat{a_2}$. The remaining possibilities follow.
\begin{enumerate}
    \item Tile $t_3$ bonds to tile $t_2$, and $t_2$ bonds to the $t_1$ tile already in the complex, creating a complex of size $m$ with one extending $a_1$ arm on tile $t_1$. 
    \item Tile $t_3$ bonds to tile $t_2$ and $t_2$ bonds to a new $t_1$ tile, creating a complex of size $m+1$ with three extending $a_1$ arms and one extending $a_2$ arm on the two $t_1$ tiles. 
    \item Tile $t_3$ bonds to a new $t_1$ tile, creating a complex of size $m$ with four extending $a_1$ arms. 
\end{enumerate}
Each extending $a_1$ arm can either bond to the start of a new cycle of tiles $t_2,t_3,...,t_{k+1}$, adding $2k$ tiles to the complex. Or, each arm may bond to tile $t_{k+3}$, which must then bond to the remaining tiles in the pot $t_{k+4},...,t_{k+n+2}$, adding $n$ tiles to the complex. Since in each case 1-3 above there is a minimum of one extending $a_1$ arm, a minimum of $n$ tiles must be added to the complex. Note, $n \leq 2k$. So, in all cases, the complex is at least the target graph order, $2k+n+1$. Case 1, with tiles $t_{k+3}, t_{k+4},...,t_{k+n+2}$ bonding to the extending $a_1$ arm, represents the unique complete complex of size $2k+n+1$ that can be realized. This complex is $Tad_{2k+1,n}$. 
\end{proof}

\begin{figure}[h!]
\centering
\begin{tikzpicture}[transform shape, scale = 0.7]
 \node[main node] (i) at (-6,0) {$t_1$};
 \node[main node] (j) at (-7.38
, -1.90) {$t_3$};
 \node[main node](k) at (-9.62
,-1.17) {$t_4$};
 \node[main node] (l) at (-9.62
,1.17) {$t_3$};
 \node[main node] (m) at (-7.38, 1.90
) {$t_2$};
\node[main node] (n) at (-4,0) {$t_5$};
\node[main node] (o) at (-2,0) {$t_6$};

\path[draw,thick,color=red,->]
(i) edge node []{} (m)
(i) edge node []{} (n);

\path[draw,thick,color=blue,->]
(m) edge node []{} (l)
(i) edge node []{} (j);

\path[draw,thick,color=green,->]
(j) edge node []{} (k)
(l) edge node []{} (k);

\path[draw,thick,color=orange,->]
(n) edge node []{} (o);

 \node[main node] (a) at (5,0) {$t_1$};
 \node[main node] (b) at (4,-2) {$t_2$};
 \node[main node](c) at (2,-2) {$t_3$};
 \node[main node] (d) at (1,0) {$t_4$};
 \node[main node] (e) at (2,2) {$t_3$};
 \node[main node] (f) at (4,2) {$t_2$};
 \node[main node] (g) at (7,0) {$t_5$};
  \node[main node] (h) at (9,0) {$t_6$};
		 
\path[draw,thick,color=red,->]
(a) edge node []{} (f)
(a) edge node []{} (b);
\path[draw,thick,color=blue,->]
(f) edge node []{} (e)
(b) edge node []{} (c);
\path[draw,thick,color=green,->]
(c) edge node []{} (d)
(e) edge node []{} (d);

\path[draw,thick,color=black,->]
(g) edge node []{} (h);

\path[draw,thick,color=orange,->]
(a) edge node []{} (g);
\end{tikzpicture}

\caption{Scenario 3 labelings of $Tad_{5,2}$ and $Tad_{6,2}$} 
\label{fig:tadpoleS3shortpathnew}
\end{figure}
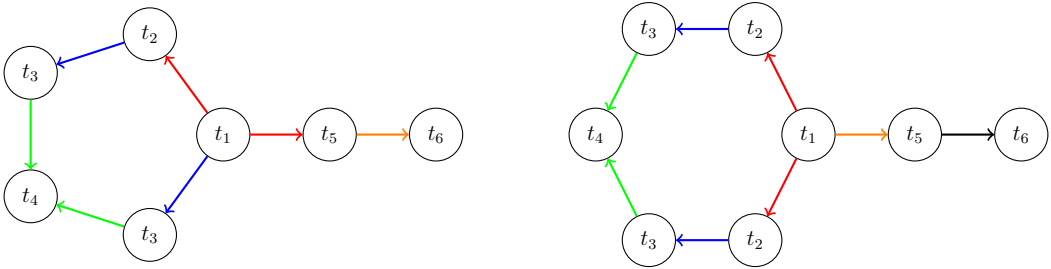

\begin{proposition}\label{prop:T3tadpoleshort} $T_3(Tad_{m, n}) = \left \lceil \frac{m}{2} \right\rceil + n + 1$ for $n < \left\lceil \frac{m}{2} \right\rceil$. \end{proposition}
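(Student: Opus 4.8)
The plan is to establish the stated value by proving matching upper and lower bounds. For the upper bound I would simply reuse the pots already built in Proposition~\ref{prop:B3tadpoleshortpath}. Counting the tile types in $P_{(2k,n)}$ from~(\ref{eq:S3tadpoleshortevenpot}) gives $1 + (k-1) + 1 + (n-1) + 1 = k+n+1$, and counting those in $P_{(2k+1,n)}$ from~(\ref{eq:S3tadpoleshortoddpot}) gives $1 + k + 1 + 1 + (n-2) + 1 = k+n+2$; these equal $\lceil m/2\rceil + n + 1$ for $m = 2k$ and $m = 2k+1$, respectively. Since those pots were shown in Proposition~\ref{prop:B3tadpoleshortpath} to realize $Tad_{m,n}$ under the Scenario~3 restrictions, they certify $T_3(Tad_{m,n}) \le \lceil m/2\rceil + n + 1$.

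For the lower bound I would combine three observations. First, by Lemma~\ref{S3NoRepeatedBondEdge} the $n$ path vertices must carry $n$ pairwise-distinct tile types. Second, mirroring the reduction used in the proof of Proposition~\ref{prop:T2tadpoleshortpath}, I would delete from the bridging tile the arm pointing into the path; the resulting pot realizes $C_m$ and nothing smaller, hence uses at least $T_2(C_m) = \lceil m/2\rceil + 1$ tile types (recall $T_2(C_m) = T_3(C_m)$ since a cycle is determined by its order). Thus the bridging vertex together with the degree-$2$ vertices of the cycle account for at least $\lceil m/2\rceil + 1$ distinct tile types. Third, the degree-$1$ path end and the degree-$3$ bridging vertex are distinguished from all other tiles by degree alone. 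If I can show that \emph{no} degree-$2$ tile type is used simultaneously in the cycle and in the path, then these two tile pools are disjoint and their sizes add to give $T_3(Tad_{m,n}) \ge (\lceil m/2\rceil + 1) + n$, matching the upper bound.

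The heart of the argument, and the step I expect to be the main obstacle, is exactly this disjointness claim. Suppose a degree-$2$ tile $t$ were used at both a cycle vertex and a path vertex. Its two arms carry two \emph{distinct} bond-edge types (distinct because consecutive path edges receive distinct types by Lemma~\ref{S3NoRepeatedBondEdge}); since $t$ lies in the path, Lemma~\ref{lemma2:tadpoleS3} forces each of these types to appear at most once in the cycle, and since $t$ also lies in the cycle each appears exactly once there. Hence every shared tile demands two adjacent bond-edge types that are \emph{cycle-singletons}, with their cycle positions pinned to their path positions by Lemma~\ref{lemma1:tadpoleS3}. The delicate point is that a single such sharing does not by itself produce a smaller or non-isomorphic graph: the re-bondings permitted by Lemmas~\ref{lemma1:tadpoleS3} and~\ref{lemma2:tadpoleS3} only reproduce $Tad_{m,n}$ up to isomorphism, so the obstruction must be established globally rather than exhibited as a direct re-bonding. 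The plan here is to show that manufacturing the extra singleton bond-edge types required to support any sharing forces the cycle to be labeled with strictly more than the minimal $\lceil m/2\rceil + 1$ tile types, by an amount that cancels each apparently saved tile, so that the total can never drop below $\lceil m/2\rceil + n + 1$. Making this trade-off precise—quantifying the cycle-singletons that sharing demands against the additional distinct degree-$2$ tiles their presence forces in any labeling of $C_m$ that still forbids smaller cycles—is the part that will require the most care, and it is precisely where the extra degrees of freedom in the spectrum render a purely linear-algebraic argument insufficient, as in Propositions~\ref{prop:B2tadpoleshortpath} and~\ref{prop:B3tadpoleshortpath}.
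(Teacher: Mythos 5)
Your upper bound is exactly the paper's, and your tile counts for the pots (\ref{eq:S3tadpoleshortevenpot}) and (\ref{eq:S3tadpoleshortoddpot}) are correct; the first two ingredients of your lower bound (Lemma \ref{S3NoRepeatedBondEdge} for the path, and the arm-deletion reduction from Proposition \ref{prop:T2tadpoleshortpath} giving at least $\lceil m/2 \rceil + 1$ tile types among the cycle vertices) are also sound. The gap is the step you yourself flag as the heart of the argument, and it is worse than ``requiring care'': the full disjointness statement you first propose to prove is false. Consider $Tad_{5,2}$ with cycle $u_0u_1u_2u_3u_4$ ($u_0$ the bridging vertex) and path $u_0v_1v_2$. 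Label the five cycle edges with five distinct bond-edge types $b_1,b_2,c,d,e$, oriented consistently around the cycle, and label the path edges $u_0v_1$ and $v_1v_2$ with $b_1$ and $b_2$ in the positions and orientations dictated by Lemma \ref{lemma1:tadpoleS3}. The resulting pot is
$$P=\left\{t_1=\{b_1^2,\hat{e}\},\ t_2=\{\hat{b}_1,b_2\},\ t_3=\{\hat{b}_2,c\},\ t_4=\{\hat{c},d\},\ t_5=\{\hat{d},e\},\ t_6=\{\hat{b}_2\}\right\},$$
in which the cycle vertex $u_1$ and the path vertex $v_1$ share the tile $t_2$. Its construction matrix has the unique solution $\left\langle \tfrac{1}{7},\tfrac{2}{7},\tfrac{1}{7},\tfrac{1}{7},\tfrac{1}{7},\tfrac{1}{7}\right\rangle$, so nothing of order less than $7$ can be realized; and since each of $b_1,c,d,e$ admits only one possible bonding pattern, the only freedom in an order-$7$ complex is which copy of $t_2$ bonds to $t_3$ versus $t_6$, and both choices yield $Tad_{5,2}$. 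So this pot satisfies Scenario 3, shares a degree-$2$ tile between cycle and path, and uses $6=\lceil 5/2\rceil+2+1$ tile types: sharing is compatible both with Scenario 3 and with optimality, so it can only be paid for in the aggregate count, never excluded.

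Your final sentences do pivot to the correct target---each shared tile forces cycle-singleton bond-edge types, and those singletons force extra cycle tiles that cancel the savings---but you leave precisely that counting unproven, and it is the entire content of the proposition beyond Proposition \ref{prop:B3tadpoleshortpath}. The paper closes it with two devices your plan misses. First, for $m$ even no delicate argument is needed at all: Proposition \ref{prop:B3tadpoleshortpath} gives $B_3(Tad_{m,n})=\frac{m}{2}+n$, and $T_3 \geq B_3+1$ (Theorem 2 of \cite{mintiles}) already matches the upper bound; the hard case is only $m$ odd, whereas your uniform approach forces you to fight the battle for both parities. Second, for $m$ odd the paper splits on the number of bond-edge types used in the cycle: if the cycle uses the minimum $\lceil m/2\rceil$ types---which is forced whenever the cycle is labeled with only $\lceil m/2\rceil+1$ tile types---then all but one of those types appears twice in the cycle, Lemma \ref{lemma2:tadpoleS3} bars every doubled type from the path, and since each degree-$2$ tile carries two types (your own observation), no cycle tile can reappear in the path; disjointness does hold in this special case and the count follows. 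If instead the cycle uses more bond-edge types, the paper argues the cycle's tile count increases in step, so the total never drops below $\lceil m/2\rceil+n+1$; my example above realizes this trade-off with equality (five cycle tiles, one shared). Your proposal, as written, stops exactly where this case analysis and counting must be carried out.
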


\begin{proof} For $m$ even, $T_3(Tad_{m,n}) \geq \frac{m}{2} + n + 1$ by Proposition \ref{prop:B3tadpoleshortpath} and Theorem 2 of \cite{mintiles}. For $m$ odd, $T_3(Tad_{m,n}) \geq \lfloor\frac{m}{2}\rfloor + n + 1$ by Proposition \ref{prop:B3tadpoleshortpath} and Theorem 2 of \cite{mintiles}, but we claim an additional tile type is required. Proposition 13 in \cite{mintiles} shows $T_3(C_m) = \lceil\frac{m}{2}\rceil + 1$. This result applies to the cycle of $Tad_{m,n}$, as a graph non-isomorphic to $Tad_{m,n}$ can be realized if fewer tile types are used in the labeling of the cycle of $Tad_{m,n}$. Thus, at least $\lceil\frac{m}{2}\rceil + 1$ tile types are needed to label the cycle of $Tad_{m,n}$. In order to achieve the minimum of $\lceil\frac{m}{2}\rceil + 1$ tile types used to label cycle, the minimum number of bond-edge types, $\lceil\frac{m}{2}\rceil$, must be used to label the edges of the cycle. In this case, only one bond-edge type can be repeated in the path subgraph, as shown in the proof of Lemma \ref{lemma2:tadpoleS3}. Thus, no tiles used to label vertices in the cycle can be used to label vertices in the path, and $T_3(Tad_{m,n}) \geq \left \lceil \frac{m}{2} \right \rceil + 1 + n$. If more bond-edge types are used to label the cycle then the number of tile types will also necessarily increase by at least one. This process still results in a minimum of $\left \lceil \frac{m}{2} \right\rceil + 1 + n$ tile types. The pots given in (\ref{eq:S3tadpoleshortevenpot}) and (\ref{eq:S3tadpoleshortoddpot}) achieve the lower bound. \end{proof}

\begin{proposition}\label{prop:S3Tadpolelongpath}
$B_3(Tad_{m, n}) = n$ and $T_3(Tad_{m, n}) = n + 2$ for $\lceil \frac{m}{2} \rceil \leq n \leq m$.
\end{proposition}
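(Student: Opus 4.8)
The plan is to obtain both equalities by leveraging the Scenario 2 results of Proposition \ref{prop:tadpoleS2mediumpath}, since Scenario 3 differs only in additionally forbidding non-isomorphic complexes of the same order $m+n$. First I would establish the lower bounds. By Lemma \ref{S3NoRepeatedBondEdge}, since $Tad_{m,n}$ is not a path and has a vertex-induced path subgraph of order $n$ joined to the cycle $C_m$ through the bridging cut-vertex, each of the $n$ path edges must carry a distinct bond-edge type, giving $B_3(Tad_{m,n}) \geq n$. For the tile count I would invoke $T_3(G) \geq T_2(G)$ from Proposition 1 of \cite{mintiles} together with the value $T_2(Tad_{m,n}) = n+2$ from Proposition \ref{prop:tadpoleS2mediumpath}, yielding $T_3(Tad_{m,n}) \geq n+2$.

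For the upper bounds I would reuse the same pots (\ref{eq:T2evenlongpathpot}) and (\ref{eq:tadpoleoddlongpathpot}) built for Scenario 2, which use exactly $n$ bond-edge types and $n+2$ tile types. The key observation is that the argument in Proposition \ref{prop:tadpoleS2mediumpath} already proves something strictly stronger than Scenario 2 demands: every complete complex realized by these pots is either the target graph or a graph of strictly greater order. In particular, no complete complex of order $m+n$ other than $Tad_{m,n}$ is realizable, which is precisely the extra requirement of Scenario 3. Hence these pots satisfy Scenario 3, and $B_3(Tad_{m,n}) \leq n$ and $T_3(Tad_{m,n}) \leq n+2$, matching the lower bounds.

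The main obstacle is pinning down the part of Scenario 3 not needed for Scenario 2, namely that no non-isomorphic graph of order exactly $m+n$ can form. I would handle this by re-examining the forced-bonding analysis underlying Proposition \ref{prop:tadpoleS2mediumpath}: tile $t_1$ must appear; it forces three copies of the chain $t_2,\ldots,t_k$, leaving three free $a_k$ arms; and the only capping that does not strictly exceed the target order is to close two of those arms into the length-$m$ cycle with $t_{n+2}$ while continuing the third through $t_{k+1},\ldots,t_{n+1}$, which reconstructs $Tad_{m,n}$ uniquely up to isomorphism. The delicate point is the boundary $n=m$, where the alternative cappings that the Scenario 2 proof describes as producing ``larger'' complexes have the same nominal tile count; here I would note that those alternative complexes are left incomplete (they terminate in a $t_1$ tile carrying unbonded $a_1$ arms), so any completion forces strictly more than $m+n$ tiles. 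This confirms $Tad_{m,n}$ is the unique complex of its order and closes the argument.
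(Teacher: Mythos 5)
Your proposal is correct and takes essentially the same route as the paper: both derive the lower bounds from Lemma \ref{S3NoRepeatedBondEdge} (the paper re-counts the extra cycle tile and bridging-vertex tile where you invoke $T_3(G) \geq T_2(G)$ together with Proposition \ref{prop:tadpoleS2mediumpath}, but the content is the same), and both reuse the Scenario 2 pots, relying on the fact that the proof of Proposition \ref{prop:tadpoleS2mediumpath} already shows every complete complex realized is either $Tad_{m,n}$ or of strictly greater order, which is exactly the uniqueness Scenario 3 requires. Your explicit treatment of the boundary case $n=m$ (the alternative cappings yield complexes that are still incomplete at size $m+n$, so any completion is strictly larger) is a detail the paper's Scenario 2 argument contains implicitly, so it adds care but no new idea.
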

\begin{proof}
By Lemma \ref{S3NoRepeatedBondEdge}, $B_3(Tad_{m,n}) \geq n$ and $T_3(Tad_{m,n}) \geq n$ with at least $n$ distinct tile types required to label the vertices of the path. The proof of Proposition \ref{prop:tadpoleS2mediumpath} shows that at least one additional two-armed tile type is required to label the vertices of the cycle, and the degree 3 bridging vertex requires a distinct tile type. This gives $T_3(Tad_{m,n}) \geq n+2$. The pot $P$ given in Proposition \ref{prop:tadpoleS2mediumpath} achieves the lower bounds. As shown in the proof of Proposition \ref{prop:tadpoleS2mediumpath}, $Tad_{m,n}$ is the unique graph of order $m+n$ realized by $P$.
\end{proof}

\begin{proposition}\label{prop:S3Tadpolelongestpath}
$B_3(Tad_{m, n}) = n$ and $T_3(Tad_{m, n}) = n + 1$ for $n > m$.
\end{proposition}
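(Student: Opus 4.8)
The plan is to reuse the pot $P = \{t_1 = \{a_1^2, \hat{a}_m\},\ t_i = \{\hat{a}_{i-1}, a_i\} \text{ for } 2 \le i \le n,\ t_{n+1} = \{\hat{a}_n\}\}$ already exhibited in Proposition \ref{prop:tadpoleS2longpath}, which uses exactly $n$ bond-edge types and $n+1$ tile types, and to show that it meets the stronger requirements of Scenario 3. The lower bounds come almost directly from the earlier machinery: Lemma \ref{S3NoRepeatedBondEdge} gives $B_3(Tad_{m,n}) \ge n$ and $T_3(Tad_{m,n}) \ge n$, and since the degree-$3$ bridging vertex has more arms than any of the $n$ degree-$1$ or degree-$2$ vertices of the path, its tile type cannot coincide with any of the $n$ path tile types, forcing one additional type and hence $T_3(Tad_{m,n}) \ge n+1$.

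For the upper bound I would first recall from Proposition \ref{prop:tadpoleS2longpath} that $\mathcal{S}(P)$ is the single vector with $r_1 = \frac{1}{m+n}$, $r_i = \frac{2}{m+n}$ for $2 \le i \le m$, and $r_j = \frac{1}{m+n}$ for $m+1 \le j \le n+1$; uniqueness with least common denominator $m+n$ already rules out any complex of order smaller than $m+n$, so only a non-isomorphic complex of order exactly $m+n$ remains to be excluded. Because the spectrum is a single point, every complete complex must employ the tile types in precisely these proportions, so an order-$(m+n)$ complex uses exactly one $t_1$, two copies of each $t_i$ for $2 \le i \le m$, and one copy of each $t_j$ for $m+1 \le j \le n+1$. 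The same proportionality applies to each connected component (itself a complete complex), forcing every component to have order a multiple of $m+n$; hence an order-$(m+n)$ complex is connected, and with the fixed tile multiset its degree sequence is pinned to one vertex of degree $3$ ($t_1$), one of degree $1$ ($t_{n+1}$), and all others of degree $2$, so any such complex is a tadpole $Tad_{m',n'}$ with $m'+n' = m+n$.

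The heart of the argument, and the step I expect to be the main obstacle, is showing the bonding is rigid enough to force $m' = m$ and $n' = n$ rather than merely some tadpole of the right order. I would track the bond-edge types: the two $a_1$-arms of $t_1$ must bond to the two $t_2$ tiles, and since for $2 \le i \le m-1$ the type $a_i$ appears on exactly two arms of each orientation lying on identical tile types, the complex grows as two lockstep chains $t_1 - t_2 - \cdots - t_m$ emanating from $t_1$, with no short circuit possible because the bond types strictly increase along each chain. Each chain terminates in a dangling $a_m$-arm, and the only two $\hat{a}_m$-arms available sit on $t_1$ and on $t_{m+1}$, so exactly one chain closes back to $t_1$, forming a cycle of length exactly $m$, while the other feeds the rigid tail $t_{m+1} - t_{m+2} - \cdots - t_{n+1}$, whose bonds $a_{m+1}, \ldots, a_n$ each occur on a unique matching pair of arms. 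The two possible matchings at the $a_m$-junction give isomorphic graphs by symmetry of the chains, so the unique order-$(m+n)$ complex is $Tad_{m,n}$. Combining, $P$ realizes $Tad_{m,n}$ and nothing else, yielding $B_3(Tad_{m,n}) \le n$ and $T_3(Tad_{m,n}) \le n+1$, which meet the lower bounds and establish the claimed equalities.
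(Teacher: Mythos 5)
Your proposal is correct and follows essentially the same route as the paper: identical lower bounds via Lemma \ref{S3NoRepeatedBondEdge} plus the extra tile type for the degree-$3$ bridging vertex, and the same pot from Proposition \ref{prop:tadpoleS2longpath} with its unique spectrum solution for the upper bounds. Your chain-tracking argument showing that the fixed tile multiset forces the complex to be exactly $Tad_{m,n}$ (rather than some other tadpole of order $m+n$) is a welcome elaboration of a step the paper states only tersely (``$\mathcal{S}(P)$ has a unique solution, therefore\ldots''), but it does not constitute a different approach.
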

\begin{proof}
By Lemma \ref{S3NoRepeatedBondEdge}, $B_3(Tad_{m,n}) \geq n$ and $T_3(Tad_{m,n}) \geq n$ with at least $n$ distinct tile types required to label the vertices of the path. The degree 3 bridging vertex requires an additional distinct tile type, so $T_3(Tad_{m,n}) \geq n+1$. The pot $P$ given in Proposition \ref{prop:tadpoleS2longpath} achieves the lower bounds; $\mathcal{S}(P)$ has a unique solution. Therefore, no graphs of smaller order or of equal order but non-isomorphic to $Tad_{m,n}$ are realized by $P$.
\end{proof}

The contrast between Propositions \ref{prop:S3Tadpolelongpath} and \ref{prop:S3Tadpolelongestpath} is notable; the longer path requires fewer tile types. This is the result of a different labeling method shown in Figure \ref{fig:tadpoleS2longpath}, which only allows for fewer tile types when the path contains more vertices than the cycle.

\section{Conclusion} 
    
We have explored flexible tile based DNA self-assembly of structures resembling the lollipop and tadpole graph family. A summary of results found in three different theoretical lab scenarios is shown in Table \ref{table:Results}.

\begin{table}
\begin{tabular}{| L{0.8in} | L{0.8in} | L{2.1in} | L{1.8in} |} 
\hline
Graph Type $G$ & Scenario 1 & Scenario 2 & Scenario 3 \\ \hline

$L_{m,1}$, \newline \small{$m$ even} & $B_1(G)=1$ \newline $T_1(G)=3$ & $B_2(G)=1$ \newline $T_2(G)=3$ & $B_3(G)=m-1$ \newline $T_3(G)=m+1$ \\ \hline

$L_{m,1}$, \newline \small{$m$ odd} & $B_1(G)=1$ \newline $T_1(G)=3$ & $B_2(G)=2$ \newline $T_2(G)=4$ & $B_3(G)=m-1$ \newline $T_3(G)=m+1$ \\ \hline

$L_{m,n}$, \newline {\small $m$ even,  $1 < n \leq m$} & $B_1(G)=1$ \newline $T_1(G)=4$  & $B_2(G)=n$ \newline $T_2(G)=n+2$  &   $m+n-2 \leq B_3(G)\leq m+n-1$ \newline $T_3(G)=m+n$ \\ \hline

$L_{m,n}$, \newline {\small $m$ odd,  $1 < n \leq m$} &  $B_1(G)=1$ \newline $T_1(G)=4$  & $B_2(G) =n$ \newline $T_2(G)=n+3$ & $m+n-2 \leq B_3(G) \leq m+n-1$ \newline $T_3(G) = m+n$ \\ \hline

$L_{m,n}$, \newline {\small $m$ even,  $n > m$} & $B_1(G)=1$ \newline $T_1(G)=4$  & $\lceil \frac{n-2k+1}{2} \rceil + 2k -1 \leq B_2(G) \leq n$ \newline $\lceil \frac{n-2k+1}{2} \rceil +2k+2 \leq T_2(G) \leq n+2$  &   $m+n-2 \leq B_3(G)\leq m+n-1$ \newline $T_3(G)=m+n$ \\ \hline

$L_{m,n}$, \newline {\small $m$ odd,  $n > m$} & $B_1(G)=1$ \newline $T_1(G)=4$  & $\lceil \frac{n-2k}{2} \rceil + 2k \leq B_2(G) \leq n$ \newline $\lceil \frac{n-2k}{2} \rceil +2k+4 \leq T_2(G) \leq n+3$  &   $m+n-2 \leq B_3(G)\leq m+n-1$ \newline $T_3(G)=m+n$ \\ \hline

$Tad_{m,n}$ \newline $n < \lceil \frac{m}{2} \rceil$ & $B_1(G)=1$ \newline $T_1(G)=3$  & $B_2(G)=\lceil \frac{m}{2} \rceil$ \newline $T_2(G)=\lceil \frac{m}{2} \rceil + 2$ & $B_3(G)=\lfloor \frac{m}{2} \rfloor + n$ \newline $T_3(G)=\lceil \frac{m}{2} \rceil + n+1$ \\ \hline

$Tad_{m,n}$ \newline $\lceil \frac{m}{2} \rceil \leq n$ \newline $\leq m$ & $B_1(G)=1$ \newline $T_1(G)=3$ & $B_2(G)=n$ \newline $T_2(G)=n+2$ & $B_3(G)=n$ \newline $T_3(G)=n+2$ \\ \hline

$Tad_{m,n}$ \newline $n > m$ & $B_1(G)=1$ \newline $T_1(G)=3$ & $\lceil \frac{n-m+1}{2} \rceil + m -1 \leq B_2(G) \leq n$ \newline $\lceil \frac{n-m+1}{2} \rceil + m+1 \leq T_2(G) \leq n+1$ & $B_3(G)=n$ \newline $T_3(G)=n+1$ \\ \hline

\end{tabular}
\caption{Summary of results for lollipop and tadpole graphs} 
\label{table:Results}
\end{table}

Exact values were provided for many orders of lollipop and tadpole graphs. In even the most difficult cases, upper and lower bounds were determined. We presented three general lemmas with lower bounds for bond-edge and tile types for appending a path to a graph via a single cut-vertex. Previous results for complete graphs and cycle graphs found in \cite{mintiles} were useful in determining the minimum tile and bond-edge types within such vertex-induced subgraphs. In \cite{mintiles}, the construction matrix was immediately helpful in determining whether a smaller graph than the target graph could be realized in Scenario 2; additional methods were needed in our work to accommodate degrees of freedom in the spectrums of the pots. Lollipop graphs highlighted the significant differences that can arise by parity. Both graph families yielded several interesting examples in which minimum values are dependent on path length in surprising ways. \par 
Results from this work may be helpful in future studies involving graphs with path subgraphs appended via a single cut-vertex. This work also gives insight into the difficulties of determining optimal values for graph families that expand in order in two distinct portions of the graph. We recommend additional research to explore the relationship between a graph and its vertex-induced subgraphs. 

\subsection{Acknowledgements}
This work was supported in part by grant P20GM103499 (SC INBRE) from the National Institute of General Medical Sciences, National Institutes of Health.

\newpage

\bibliography{bibliography}

\end{document}